
\documentclass[reqno]{amsart}



\usepackage[utf8]{inputenc}
\usepackage[T1]{fontenc}
\usepackage[british]{babel,isodate}
\cleanlookdateon
\usepackage{lmodern}
\usepackage{enumerate}
\usepackage{tikz}
\usepackage{tikz-cd}
\usetikzlibrary{arrows}
\usepackage{cancel}
\usetikzlibrary{babel}
\usepackage{lscape}
\usepackage{url}

\usepackage{hyperref}
\hypersetup{linktocpage,colorlinks=true,citecolor=purple,linkcolor=blue,urlcolor=blue,filecolor=black}

\usepackage{float}
\usepackage{graphicx, import}
\usepackage{mathtools}
\usepackage{dsfont}
\usepackage{pinlabel}
\usepackage{yhmath}
\usepackage{enumitem}


\usepackage{xargs}                      
\usepackage{xcolor} 
\definecolor{OliveGreen}{rgb}{0,0.6,0}
 
\usepackage[colorinlistoftodos, prependcaption,textsize=tiny]{todonotes}
\newcommandx{\unsure}[2][1=]{\todo[linecolor=red,backgroundcolor=red!25,bordercolor=red,#1]{#2}}
\newcommandx{\change}[2][1=]{\todo[linecolor=blue,backgroundcolor=blue!25,bordercolor=blue,#1]{#2}}
\newcommandx{\info}[2][1=]{\todo[linecolor=OliveGreen,backgroundcolor=OliveGreen!25,bordercolor=OliveGreen,#1]{#2}}

\newcommand{\centre}[1]{\begin{array}{c} #1 \end{array}}

\usepackage{stackengine}

\usepackage{amsmath,amsthm,amssymb, amsfonts, amscd}
\usepackage{rotating,subcaption}
\usepackage{url}
\usepackage{graphicx,bm}
\usepackage{mathtools}
\usepackage[mathscr]{euscript}
\allowdisplaybreaks
\usepackage{lipsum}
\usepackage{bm}
\usepackage[capitalize]{cleveref} 






\usepackage{xpatch}
\makeatletter
\AtBeginDocument{\xpatchcmd{\@thm}{\thm@headpunct{.}}{\thm@headpunct{}}{}{}}
\makeatother

\newtheorem{theorem}{Theorem}[section]
\newtheorem{proposition}[theorem]{Proposition}
\newtheorem{lemma}[theorem]{Lemma}
\newtheorem{corollary}[theorem]{Corollary}




\theoremstyle{definition}

\newtheorem{example}[theorem]{Example}

\newtheorem{construction}[theorem]{Construction}




\theoremstyle{remark}

\newtheorem{remark}[theorem]{Remark}


\numberwithin{equation}{section}


\newcommand{\hooklongrightarrow}{\lhook\joinrel\longrightarrow}

\newcommand{\N}{\mathbb{N}}
\newcommand{\Z}{\mathbb{Z}}

\newcommand{\R}{\mathbb{R}}

\newcommand{\C}{\mathcal{C}}

\newcommand{\B}{\mathcal{B}}

\newcommand{\W}{\mathcal{W}}
\newcommand{\Cstr}{\C^{\mathrm{str}}}

\newcommand{\T}{\mathcal{T}}
\newcommand{\Tup}{\mathcal{T}^{\mathrm{up}}}
\newcommand{\id}{\mathrm{Id}}

\newcommand{\eend}[2]{\mathrm{End}_{#1}(#2)}
\let\hom\relax
\newcommand{\hom}[3]{\mathrm{Hom}_{#1}(#2,#3)}
\renewcommand{\to}{\longrightarrow}

\renewcommand{\SS}{\mathfrak{S}}
\DeclareMathOperator{\rot}{rot}
\DeclareMathOperator{\sign}{sign}

\newcommand{\toiso}{\overset{\cong}{\to}}


\usepackage{accents}
\newcommand{\uhat}{\underaccent{\check}}

\makeatletter
\newcommand{\cupr@tip}{\text{\raisebox{-0.1ex}{$\m@th\hat{}$}}}
\newcommand{\cupr}{\mathbin{\cup\cupr@}}

\newcommand{\cupr@}{%
  \mathchoice
  {\mkern-1.35mu\cupr@tip}
  {\mkern-1.35mu\cupr@tip}
  {\mkern-1.55mu\cupr@tip}
  {\mkern-1.875mu\cupr@tip}
}
\makeatother

\makeatletter
\newcommand{\capr@tip}{\text{\raisebox{0.47ex}{$\m@th\uhat{}$}}}
\newcommand{\capr}{\mathbin{\capr@\cap}}

\newcommand{\capr@}{%
  \mathchoice
  {\mkern11.6mu\capr@tip\mkern-11.6mu}
  {\mkern11.4mu\capr@tip\mkern-11.4mu}
  {\mkern11.1mu\capr@tip\mkern-11.1mu}
  {\mkern10.2mu\capr@tip\mkern-10.2mu}
}
\makeatother

\makeatletter
\newcommand{\capl@tip}{\text{\raisebox{0.47ex}{$\m@th\uhat{}$}}}
\newcommand{\capl}{\mathbin{\capl@\cap}}

\newcommand{\capl@}{%
  \mathchoice
  {\mkern2.1mu\capl@tip\mkern-2.1mu}
  {\mkern2.1mu\capl@tip\mkern-2.1mu}
  {\mkern2.3mu\capl@tip\mkern-2.3mu}
  {\mkern2.1mu\capl@tip\mkern-2.1mu}
}
\makeatother

\makeatletter
\newcommand{\cupl@tip}{\text{\raisebox{-0.1ex}{$\m@th\hat{}$}}}
\newcommand{\cupl}{\mathbin{\cupl@\cup}}

\newcommand{\cupl@}{%
  \mathchoice
  {\mkern1.35mu\cupl@tip\mkern-1.35mu}
  {\mkern1.35mu\cupl@tip\mkern-1.35mu}
  {\mkern1.55mu\cupl@tip\mkern-1.55mu}
  {\mkern1.875mu\cupl@tip\mkern-1.875mu}
}
\makeatother


\newcommand{\KPP}[1]{%
  \begin{tikzpicture}[scale=0.8, baseline=-\dimexpr\fontdimen22\textfont2\relax]
  #1
  \end{tikzpicture}%
}

\newcommand{\PC}{%
  \KPP{
    \draw[color=black] (0.2,0.2) -- (-0.2,-0.2);
    \draw[color=black] (0.2,-0.2) -- (0.05,-0.05);
    \draw[color=black] (-0.05,0.05) -- (-0.2, 0.2);
     \draw[color=black] (-0.05,0.05) -- (-0.2, 0.2);
      \draw[color=black] (0.1,0.2) -- (0.21, 0.2);
      \draw[color=black] (0.2,0.1) -- (0.2, 0.21);
   \draw[color=black] (-0.21,0.2) -- (-0.1, 0.2);
    \draw[color=black] (-0.2,0.21) -- (-0.20, 0.1);
  }%
}

\newcommand{\NC}{%
  \KPP{
    \draw[color=black] (-0.2,0.2) -- (0.2,-0.2);
    \draw[color=black] (-0.2,-0.2) -- (-0.05,-0.05);
    \draw[color=black] (0.05,0.05) -- (0.2, 0.2);
          \draw[color=black] (0.1,0.2) -- (0.21, 0.2);
      \draw[color=black] (0.2,0.1) -- (0.2, 0.21);
   \draw[color=black] (-0.21,0.2) -- (-0.1, 0.2);
    \draw[color=black] (-0.2,0.21) -- (-0.20, 0.1);
  }%
}



\DeclareFontFamily{U}{mathx}{}
\DeclareFontShape{U}{mathx}{m}{n}{ <-> mathx10 }{}
\DeclareSymbolFont{mathx}{U}{mathx}{m}{n}
\DeclareFontSubstitution{U}{mathx}{m}{n}
\DeclareMathAccent{\widecheck}{0}{mathx}{"71}

\usetikzlibrary{decorations.markings}
\tikzset{double line with arrow/.style args={#1,#2}{decorate,decoration={markings,%
mark=at position 0 with {\coordinate (ta-base-1) at (0,1pt);
\coordinate (ta-base-2) at (0,-1pt);},
mark=at position 1 with {\draw[#1] (ta-base-1) -- (0,1pt);
\draw[#2] (ta-base-2) -- (0,-1pt);
}}}}






\begin{document}


\title{A refined functorial universal tangle invariant}

\date{\today}

\author{Jorge Becerra}
\address{Université Bourgogne Europe, CNRS, IMB UMR 5584, 21000 Dijon, France}
\email{\href{mailto:Jorge.Becerra-Garrido@ube.fr}{Jorge.Becerra-Garrido@ube.fr}}
\urladdr{ \href{https://sites.google.com/view/becerra/}{https://sites.google.com/view/becerra/}} 




\begin{abstract}
The universal invariant with respect to a given ribbon Hopf algebra $A$ is a tangle invariant that dominates all the Reshetikhin-Turaev invariants built from the representation theory of the algebra. In this paper, we construct a canonical  strict monoidal  functor $Z_A: \Tup \to \mathcal{E}(A)$, where $\Tup$ is the category of upwards tangles in a cube and $\mathcal{E}(A)$ is the so-called ``category of elements'' of $A$, that  encodes the universal invariant of upwards tangles. Moreover, this functor refines the Kerler-Kauffman-Radford functorial invariant $\mathsf{Dec}_A$ in the sense that it factors through $Z_A$ and, unlike $\mathsf{Dec}_A$, our functor $Z_A$  preserves the braiding, twist and the open trace of $\Tup$, the latter being a mild modification of Joyal-Street-Verity's notion of trace in a balanced category. We construct this functor $Z_A$ using the more flexible XC-algebras, a class which contains both ribbon Hopf algebras and endomorphism algebras of representation of these.
\end{abstract}

\keywords{universal invariant, ribbon Hopf algebra, XC-algebra, open-traced monoidal category}
\subjclass{16T05, 18M10, 18M15, 57K10}


\maketitle

\setcounter{tocdepth}{1}
\tableofcontents


\section{Introduction}

Modern knot theory is tied to the area of \textit{quantum topology} that arose at the end of the 1980s after the work of the Fields medallists  Vaughan Jones, Vladimir Drinfeld and Edward Witten (the three of them  awarded in 1990). The point of view of quantum topology is different to that of most branches of mathematics in the following sense: generally,  one is interested in constructing invariants of a certain topological/geometrical object (e.g. an elliptic curve, a topological space, a knot) that are intrinsic, that is, invariants that are built using only data from the object itself (e.g. its rank, its homotopy groups, its Alexander polynomial, respectively).  On the contrary, quantum topology studies algebraic invariants of objects, typically from low-dimensional topology, using the  additional data of some algebraic gadget that somehow encodes or mimics properties that the topological object satisfies. Invariants built this way are usually called \textit{quantum invariants}. The word ``quantum'' is a remnant of  the fact that this theory was inspired by ideas from theoretical physics. For instance, one of the cornerstones of quantum topology, namely \textit{quantum groups} (roughly, one-parameter deformations of the universal enveloping algebra of semisimple Lie algebras),  arose as a mathematical formalisation of ideas from the Leningrad school of mathematical physics.

This paper revolves around one of these invariants, namely the so-called \textit{universal invariant} of knots, which was defined by Lawrence \cite{lawrence}  and  was further developed by Reshetikhin \cite{reshetikhin}, Lee \cite{lee1,lee2}, Hennings \cite{hennings}, Ohtsuki \cite{ohtsuki1,ohtsukibook} and Habiro \cite{habiro}. The auxiliary algebraic gadget used to construct this invariant is a \textit{ribbon Hopf algebra}, that is, a Hopf algebra $A$ together with the choice of two invertible elements $R\in A \otimes A$ and $v \in A$ fulfilling certain axioms that somehow  mimic  algebraically properties that the positive crossing and the full twist satisfy geometrically. In its simplest form, the universal invariant of a (long) knot $K$, viewed as a $(1,1)$-tangle, is an element $\mathfrak{Z}_A(K) \in A$. The adjective ``universal'' is due to the fact that this invariant dominates all Reshetikhin-Turaev invariants of the knot $K$, these being built from the representation theory of $A$. More precisely, given a finite-dimensional $A$-module $\rho: A \to \eend{\Bbbk}{W}$, we have that $$RT_W (K)= \rho(\mathfrak{Z}_A(K)).$$
Here $RT_W: \T \to \mathsf{fMod}_A^{\mathrm{str}}$ is the functor canonically defined by the fact that the category of framed, oriented tangles in a cube $\T$ (with objects sequences of signs $+$ and $-$) is the free strict ribbon category generated by a single object, and the knot $K$ is viewed a morphism $K: + \to +$.

The aim of this paper is to construct a  strict monoidal functor $$Z_A: \Tup \to \mathcal{E}(A)$$ that encodes the universal invariant of knots. Kerler  \cite{kerler1}, Kauffman \cite{K1} and Kauffman-Radford \cite{KR3,KR2,KR1} have already proposed a version $\mathsf{Dec}_A : \T \to s\T (A)$ of such a functor, where the target $s\T (A)$ is the so-called ``category of singular $A$-tangles''. However, their functor $\mathsf{Dec}_A$ has a few downsides. Firstly, this functor is ill-defined for tangles with closed components. Secondly, their target category $s\T (A)$ does not even admit a braiding with the property that  $\mathsf{Dec}_A$  is a braided functor. Thirdly,  this functor does not arise canonically (that is, from a universal property), despite it dominates the Reshetikhin-Turaev functors $RT_W$, which do arise canonically. On the contrary, the functor  $Z_A: \Tup \to \mathcal{E}(A)$ that we construct in this paper preserves all the structure that $\Tup $ has, in particular $Z_A$ is braided. Besides, our functor $Z_A$   will arise canonically from a universal property, and will be shown to refine Kerler-Kauffman-Radford functor in the sense that (an appropriate version of) $\mathsf{Dec}_A$  factors through $Z_A$. Furthermore, when the algebra $A$ is endowed with a trace, our construction of $Z_A$ can be modified to incorporate closed components. A different approach to study the universal invariant functorially was given by Habiro in \cite{habiro} making use of bottom tangles.

\subsection{The category \texorpdfstring{$\Tup$}{Tup} of upwards tangles}

We will define the functorial universal invariant in a convenient, yet sufficiently general subcategory  of tangles that contains all (long) knots. Concretely, we write $\Tup \subset \T$ for the monoidal subcategory on the objects sequences of the sign $+$ (hence non-negative integers) and arrows tangles without closed components. Therefore $\Tup$ sits in between the monoidal subcategory $\mathcal{B} \subset \T$ of framed braids and the full monoidal subcategory $\T^+ \subset \T$ on sequences of the sign $+$,
$$\mathcal{B} \hooklongrightarrow \Tup \hooklongrightarrow \T^+ .$$
Upwards tangles have the advantage that they admit diagrams, that we call \textit{rotational}, which are made of the following five local pictures,
\begin{equation}\label{eq:crossings_and_spinners_INTRO}
\centre{
\labellist \small \hair 2pt
\endlabellist
\centering
\includegraphics[width=0.6\textwidth]{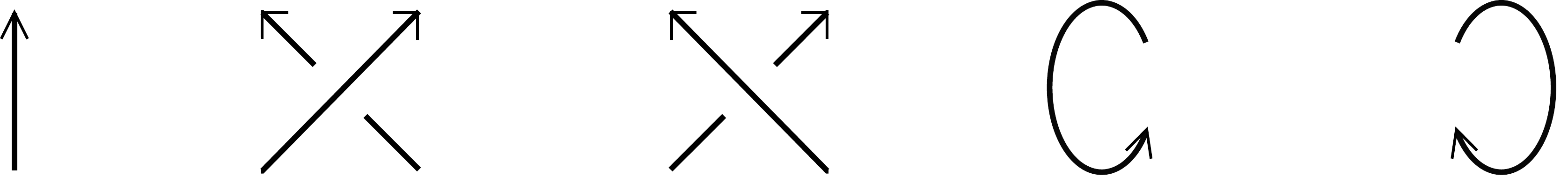}}
\end{equation}
\noindent and moreover a family of Reidemeister moves in terms of these five pieces can be made explicit.

It turns out that the category $\Tup$, just like $\B$, $\T^+$ or $\T$, satisfies a universal property:

\begin{theorem}[\cref{thm:UP_opentraced}]
The category $\Tup$ of upwards tangles is the free strict open-traced monoidal category generated by a single object.
\end{theorem}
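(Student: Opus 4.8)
The plan is to deduce the universal property from an explicit generators-and-relations presentation of $\Tup$ and then to match that presentation term-by-term with the open-traced monoidal structure. I would use that every upwards tangle is represented by a rotational diagram assembled from the five local pictures of \eqref{eq:crossings_and_spinners_INTRO}, and that two rotational diagrams represent the same tangle exactly when they are connected by the explicit family of Reidemeister moves. Thus, as a strict monoidal category, $\Tup$ is generated by the single object $+$ together with these five morphisms, modulo those moves. The remaining task is to show that, once the open-traced structure is allowed, these five morphisms cease to be independent generators but are forced by that structure applied to $+$, and that the moves are consequences of the open-traced axioms.

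The first step is to identify the structural morphisms of $\Tup$. The two crossings are the braiding $\beta_{+,+}$ and its inverse, and the twist on $+$ is the balancing; these account for the braid-like generators and reflect the classical fact that $\B$ is the free balanced monoidal category on one object. The remaining, rotational generators (the spinners) I expect to recover from the open trace: a turnback of a strand is produced by applying the open trace to an elementary braiding or identity, possibly corrected by the twist. Making these identifications precise shows that $\Tup$ is generated, \emph{as a strict open-traced monoidal category}, by the single object $+$ with no further generating morphisms.

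The second step is to verify that each defining move holds in an arbitrary strict open-traced monoidal category. The moves involving only crossings and the twist follow from the braided and balanced axioms, as for $\B$. The moves involving the spinners must be derived from the naturality, dinaturality, superposing, vanishing and yanking axioms of the open trace. This is where I expect the main obstacle to lie: because the open trace is a \emph{modification} of the Joyal--Street--Verity trace, tailored to the upwards, closed-component-free setting, the usual dictionary between trace axioms and rotational moves has to be re-established from scratch; in particular one must check that sliding a strand across a spinner and cancelling two opposite spinners correspond precisely to dinaturality and yanking, while carrying the correct twist and framing corrections throughout.

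Granting these two steps, the universal property follows by the standard argument. For uniqueness, a strict open-traced monoidal functor $F\colon \Tup \to \mathcal{D}$ is pinned down by the object $F(+)$: by the first step every generator is the image, under the open-traced structure of $\mathcal{D}$, of that object, and the generators generate all of $\Tup$. For existence, given a strict open-traced monoidal category $\mathcal{D}$ and an object $X$, I would define $F$ on generators by $F(+) = X$, sending the crossings to the braiding of $X$ and its inverse, the twist to the balancing of $X$, and the spinners to the corresponding open traces in $\mathcal{D}$; by the second step $F$ respects every defining move, hence extends to a well-defined strict monoidal functor, which preserves the braiding, twist and open trace by construction.
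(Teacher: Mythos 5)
There is a genuine gap, and it sits in your first step. You propose to present $\Tup$ by the five local pictures of \eqref{eq:crossings_and_spinners_INTRO} modulo the rotational Reidemeister moves, identify the crossings with the braiding and the spinners with ``open traces of elementary pieces'', and then check the moves in an arbitrary open-traced category. But the full rotations $C$, $C^{-}$ cannot be interpreted this way: a rotational diagram is a \emph{planar} composite of the five pieces whose intermediate horizontal slices contain downward-pointing strands, so it is not a composite of those pieces inside $\Tup$ (if it were, every upwards tangle would be a framed braid, since $C$ and $C^{-}$ are isotopic to the identity strand). Correspondingly, in a general strict open-traced monoidal category $\C$ there is no morphism playing the role of an isolated cup, cap, or mid-diagram spinner: the open trace $\mathrm{tr}^{U}_{X,Y}$ is a global operation that closes off an entire strand of an admissible endomorphism, not a local turnback. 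So the assignment ``$F(+)=X$, crossings $\mapsto$ braiding, spinners $\mapsto$ open traces'' does not by itself define $F$ on an arbitrary rotational diagram, and checking the rotational Reidemeister moves in $\C$ is not even meaningful until that definition exists.

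The paper circumvents exactly this by a different normal form and a different set of moves. It first shows (using \cref{lemma:every_tangle_has_rot_diag} and the moves $(\Omega 1f)$, $(\Omega 2c)$, $(\Omega 3)$) that every upwards tangle is a \emph{generic} rotational diagram, i.e.\ the partial closure $\mathrm{cl}_n(b)$ of a braid $b$ as in \eqref{eq:generic_closure}; this expresses every morphism of $\Tup$ through operations that do exist in $\C$, namely the braiding applied to $X$ followed by one application of the open trace, and it is what makes both existence and uniqueness go through. Well-definedness is then checked not against the rotational Reidemeister moves but against a Markov-type theorem: two braids have isotopic partial closures if and only if they are related by the $k$-open framed Markov moves (conjugation by $1_k\otimes a$ and framed stabilisation), proved via the Lambropoulou--Rourke $L$-moves; invariance under these follows from naturality of the trace in $U$ and axiom (OTC4). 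To repair your argument you would need to supply both ingredients --- the reduction to partial closures of braids and the open framed Markov theorem --- at which point the local generators-and-relations scaffolding becomes unnecessary.
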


The notion of open-traced monoidal category is a minor modification of that of  traced monoidal category due to Joyal-Street-Verity \cite{joyal_street_traced}. Roughly, an open-traced monoidal category is a balanced monoidal category $\C$ (i.e. braided and with a twist) together with a family of ``partial trace'' operations
$$\mathrm{tr}_{X,Y}^U : \mathrm{Hom}_\C^{ad} (X \otimes U, Y \otimes U) \to \hom{\C}{X}{Y}$$
defined for  ``admissible'' morphisms. Informally, this means that the resulting arrow $\mathrm{tr}_{X,Y}^U (f)$ has no closed components in the graphical calculus. Formally, admissibility of morphisms is defined by means of a balancing-preserving strict monoidal functor $P: \C \to \SS$, which is part of the data. Here $\SS$ denotes the free strict symmetric monoidal category generated by a single object.

For instance, the category $\T^+$ (which is traced in the sense of Joyal-Street-Verity) is also open-traced, and more generally if $\C$ is a traced monoidal category and $X$ is an object of $\C$ such that all tensor powers $X^{\otimes n}$ are different objects, then one can construct a open-traced monoidal category $\C_X$ which is a monoidal subcategory of $\C$, see \cref{constr:open_tr_from_tr}.

\subsection{XC-algebras}

Classically, the universal tangle invariant is defined with the extra data of a ribbon Hopf algebra $(A, R, v)$. However, the comultiplication, counit and antipode from the Hopf algebra are not really used to define $\mathfrak{Z}_A(K)$ (although they satisfy some naturality properties with respect to tangle operations, see e.g. \cite{ohtsukibook,habiro,becerra_gaussians}). Our construction of the functorial universal invariant uses the minimal algebraic data needed to produce an isotopy invariant, that we call an \textit{XC-algebra}.

More precisely, an XC-algebra structure over a  $\Bbbk$-algebra $A$ is pair $(R,\kappa)$  where $R\in A \otimes A$ and $\kappa \in A$ are invertible elements that are the algebraic counterparts of the positive crossing and the clock-wise full rotation from  \eqref{eq:crossings_and_spinners_INTRO}. As expected, any ribbon Hopf algebra is an example of XC-algebra, see \cref{prop:ribbon->XC}. Moreover, any XC-algebra structure on an algebra $A$ induces an XC-algebra structure on the endomorphism algebra $\eend{\Bbbk}{W}$ for any finite-dimensional $A$-module $W$. The endomorphism XC-algebra has the additional property of being \textit{traced} (as an algebra).

The main construction of this paper is a strict open-traced monoidal category associated to any XC-algebra:

\begin{theorem}[\cref{thm:E(A)_opentraced} and \cref{thm:E(A)_traced}]
For any XC-algebra (resp. traced XC-algebra) $A$, we can construct an open-traced (resp. traced) monoidal category $\mathcal{E}(A)$ with objects non-negative integers and only endomorphisms with $$ \eend{\mathcal{E}(A)}{n} \subset A^{\otimes n} \times \SS_n  $$ (this is a set-theoretical inclusion).
\end{theorem}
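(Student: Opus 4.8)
The plan is to realise $\mathcal{E}(A)$ as a monoidal subcategory of an ambient ``crossed product'' category assembled from the tensor powers of $A$ and the symmetric groups $\SS_n = \hom{\SS}{n}{n}$, with the invertible elements $R$ and $\kappa$ supplying the braiding and the twist.

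\emph{Step 1 (the ambient category).} First I would introduce an auxiliary strict monoidal category $\widetilde{\mathcal{E}}(A)$ with objects the non-negative integers, no morphisms between distinct objects, and endomorphism monoid $\eend{\widetilde{\mathcal{E}}(A)}{n} = A^{\otimes n}\times \SS_n$. Letting $\SS_n$ act on $A^{\otimes n}$ by permuting tensor legs, $\sigma\cdot(a_1\otimes\cdots\otimes a_n) = a_{\sigma^{-1}(1)}\otimes\cdots\otimes a_{\sigma^{-1}(n)}$, I define composition as the crossed (smash) product
\[(b,\tau)\circ(a,\sigma) := \big(b\cdot(\tau\cdot a),\ \tau\sigma\big),\]
with unit $(1^{\otimes n},\id)$. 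Associativity is immediate from the fact that $\SS_n$ acts by algebra automorphisms; note that neither $R$ nor $\kappa$ enters here. The strict monoidal structure is $m\otimes n := m+n$ on objects and $(a,\sigma)\otimes(b,\tau) := (a\otimes b,\ \sigma\oplus\tau)$ on morphisms, where $\sigma\oplus\tau\in\SS_{m+n}$ is the ordinal sum (juxtaposition) of permutations; bifunctoriality follows from the compatibility of the leg-permuting action with $\oplus$ and is a routine check. Finally, the projection $P\colon \widetilde{\mathcal{E}}(A)\to\SS$, $(a,\sigma)\mapsto\sigma$, is a strict monoidal functor.

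\emph{Step 2 (braiding, twist, and the subcategory $\mathcal{E}(A)$).} I would then single out the structural morphisms: the braiding $c_{1,1} := (R, s)\in\eend{\widetilde{\mathcal{E}}(A)}{2}$, where $s$ is the transposition, together with its inverse, and the twist $\theta_1 := (\kappa,\id)\in\eend{\widetilde{\mathcal{E}}(A)}{1}$ and its inverse; the general $c_{m,n}$ and $\theta_n$ are the unique values forced by the hexagon and balancing equations. Crucially, an arbitrary decorated identity does \emph{not} slide past a crossing — this is exactly the feature distinguishing an XC-algebra from a quasitriangular Hopf algebra, where the comultiplication would be needed — so the braiding is not natural on all of $\widetilde{\mathcal{E}}(A)$. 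I therefore define $\mathcal{E}(A)$ to be the smallest monoidal subcategory of $\widetilde{\mathcal{E}}(A)$ containing $c_{1,1}^{\pm 1}$ and $\theta_1^{\pm 1}$ (and closed under the partial trace of Step 3); this yields the claimed set-theoretic inclusion $\eend{\mathcal{E}(A)}{n}\subset A^{\otimes n}\times\SS_n$, in general proper. On such a subcategory, naturality of $c$ and $\theta$ need only be verified against the generators, where it becomes precisely the defining relations of an XC-algebra (the Yang–Baxter-type identity for $R$ and the compatibility of $\kappa$ with $R$), and then propagates to all composites and tensor products by functoriality. The hexagon and balancing axioms reduce to the same identities, and $P$ is balancing-preserving by construction.

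\emph{Step 3 (the trace) and the main obstacle.} For the open-traced structure I would take $P$ as the balancing-preserving functor witnessing admissibility: a morphism in $\eend{\mathcal{E}(A)}{n+1}$ is admissible precisely when its image under $P$ is an admissible morphism of $\SS$, i.e. when closing the last strand produces no closed loop. On such morphisms the partial trace $\mathrm{tr}^1$ is the expected bookkeeping operation — contract the through-strand, transport its decoration along the leg-permuting action, and multiply — which by admissibility lands again in the subcategory, giving \cref{thm:E(A)_opentraced}. For the traced case, when $A$ carries an algebra trace one additionally evaluates the decoration of a genuine closed loop against this trace, removing the admissibility restriction and promoting $\mathcal{E}(A)$ to a traced monoidal category, which is \cref{thm:E(A)_traced}. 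The main obstacle is this final step: one must check the complete list of (open-)trace axioms — naturality and dinaturality of $\mathrm{tr}$, together with the vanishing, superposing and yanking identities and compatibility with $c$ and $\theta$ — all against the crossed-product composition, and simultaneously verify that the partial trace of an admissible morphism is again admissible so that it lands in the prescribed hom-subset. Dinaturality (the ``sliding'' identity) is the delicate one, since it mixes the $\SS_n$-action with the placement of $R$ and $\kappa$; I expect it to follow, as in Step 2, by reducing to the XC-algebra relations on generators and extending by functoriality, but the bookkeeping of tensor-leg indices under partial closure is where the real work lies.
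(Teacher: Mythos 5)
Your Steps 1--2 reproduce the paper's skeleton (a crossed-product monoid structure on $A^{\otimes n}\times\SS_n$, the bundle construction, and $\mathcal{E}(A)$ as the submonoids generated by the braiding, its inverses and partial closures), but two concrete ingredients are wrong or missing, and both concern where the balancing element $\kappa$ actually lives. First, the twist is \emph{not} $(\kappa,\id)$: it is $(\nu,\id)$ where $\nu=\mu^{[3]}(R_{13}\cdot\kappa^{-1}_2)=\sum_i\alpha_i\kappa^{-1}\beta_i$ is the value of the universal invariant on a positive kink (for a ribbon Hopf algebra $\nu=v^{-1}=u^{-1}\kappa\neq\kappa$ in general). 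Second, your partial trace ``contract, transport, multiply'' omits the insertion of $\kappa$ at the closure point: closing the last strand of $(\sum x_1\otimes\cdots\otimes x_{n+1},\sigma)$ must produce $x_{n+1}\,\kappa\,x_{\sigma^{-1}(n+1)}$ on the receiving leg (and $\mathrm{tr}(\kappa\, x_{n+1})$ for a genuine loop in the traced case), because a right-hand closure geometrically contains a clockwise rotation $C^-$, whose bead is $\kappa$. These two errors do not cancel: axiom (OTC4) forces $\mathrm{tr}^U_{U,U}(\tau_{U,U})=\theta_U$, and with your formulas this would read $\sum_i\beta_i\alpha_i=\kappa$, which no XC axiom provides; with the correct formulas it reads $\sum_i\beta_i\kappa\alpha_i=\sum_i\alpha_i\kappa^{-1}\beta_i$, which is exactly axiom (XC1f).

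The ``main obstacle'' you flag at the end is also not resolvable by ``check on generators and propagate by functoriality'': naturality of the braiding and twist, and (di)naturality of the trace, must hold against \emph{all} morphisms of $\mathcal{E}(A)$, and a partial trace of a braid word is not a categorical composite of the generators, so functoriality alone does not propagate the identities to it. The paper's mechanism for this is the observation that every element of $\eend{\mathcal{E}(A)}{n}$ is of the form $(\mathfrak{Z}_A(D),\sigma_D)$ for $D$ a product of partial closures of braid diagrams, where $\mathfrak{Z}_A$ is the bead-placement map of \eqref{eq:beads}; since the XC axioms are precisely the images under $\mathfrak{Z}_A$ of the rotational Reidemeister moves, $\mathfrak{Z}_A$ is an isotopy invariant, and every axiom of an open-traced (resp.\ traced) category then follows from an explicit tangle isotopy. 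Supplying that identification, and the correct placement of $\kappa$, is the substance of the proof that your proposal leaves open.
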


We call $\mathcal{E}(A)$ the \textit{category of elements} of $A$. This category is constructed in such a way that, if $\sigma_T$ denotes the permutation induced by an upwards tangle $T$, then we have a set-theoretical equality
\begin{equation}\label{eq:set-theor_1}
 \eend{\mathcal{E}(A)}{n} = \{ (\mathfrak{Z}_A(T), \sigma_T) : T \in \eend{\Tup}{+^n}   \},
\end{equation}
which in the traced case becomes
\begin{equation}\label{eq:set-theor_2}
\eend{\mathcal{E}(A)}{n} = \{ (\mathfrak{Z}_A(T), \sigma_T) : T \in \eend{\T^+}{+^n}   \}.
\end{equation}

The unique strict monoidal functor $$Z_A: \Tup \to  \mathcal{E}(A)$$ that sends $+$ to $1$ and that preserves the braiding, twist and open trace will be called the \textit{functorial universal invariant} associated to $A$. In the traced case, this becomes a traced functor $Z_A: \T^+ \to  \mathcal{E}(A)$. In particular, the previous equalities \eqref{eq:set-theor_1} and \eqref{eq:set-theor_2} express that $Z_A$ is full both in the non-traced and traced case.

We emphasise that one advantage of this functorial version of the universal  is that it arises canonically  from the universal property of $\Tup$.

\subsection{Comparison with Kerler-Kauffman-Radford functorial invariant}

The functorial version of universal invariant due to Kerler-Kauffman-Radford can be expressed as a well-defined ``decoration functor'' $$\mathsf{Dec}_A : \Tup \to s\Tup (A)$$ from $\Tup$ to the so-called category of ``singular upwards $A$-tangles''  \cite{KR3, kerler1}.  This is a strict monoidal functor, but it does not preserve any other structure of $\Tup$, in particular $s\Tup (A)$ does not admit a braiding such that $\mathsf{Dec}_A$ is a braided functor.

\begin{theorem}[\cref{thm:comparison_KKR}]
The functor $\mathsf{Dec}_A $ factors through $Z_A$, that is, there is a commutative  diagram of strict monoidal functors as below :
$$
\begin{tikzcd}
\Tup \rar{\mathsf{Dec}_A} \dar[swap]{Z_A} & s\Tup (A) \\
\mathcal{E}(A) \arrow[hook]{ur} &
\end{tikzcd}
$$
\end{theorem}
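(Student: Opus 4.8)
The plan is to build the inclusion $\iota\colon \mathcal{E}(A)\hookrightarrow s\Tup(A)$ by transport of structure along $Z_A$, exploiting that $Z_A$ is \emph{full} (which is exactly the content of the set-theoretic equality \eqref{eq:set-theor_1}). A universal-property argument is unavailable here: as recalled in the statement, $s\Tup(A)$ admits no braiding making $\mathsf{Dec}_A$ braided, so we cannot present $s\Tup(A)$ as a structured free category and read the functor off from its generators. All three categories have the same objects under the evident identification, and all three functors are the identity there, so it suffices to define $\iota$ on endomorphism sets. Given $m \in \eend{\mathcal{E}(A)}{n}$, fullness lets me pick an upwards tangle $T \in \eend{\Tup}{+^n}$ with $Z_A(T) = m = (\mathfrak{Z}_A(T), \sigma_T)$, and I set $\iota(m) := \mathsf{Dec}_A(T)$. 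With this definition the square commutes tautologically, so the theorem reduces to showing that $\iota$ is a well-defined, faithful, strict monoidal functor.

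The crux, and the main obstacle, is well-definedness: I must show that $Z_A(T) = Z_A(T')$ implies $\mathsf{Dec}_A(T) = \mathsf{Dec}_A(T')$ in $s\Tup(A)$, i.e. that the singular $A$-tangle $\mathsf{Dec}_A(T)$ depends only on the pair $(\mathfrak{Z}_A(T), \sigma_T)$. I would prove this by reducing $\mathsf{Dec}_A(T)$ to a normal form using the defining relations of $s\Tup(A)$. The bead-sliding relations transport every decorating element along its strand to a common level and multiply them, so that any singular $A$-tangle becomes equivalent to the standard flat diagram of its underlying permutation carrying a single bead on each strand; the underlying permutation is $\sigma_T$, and the bead accumulated on the $j$-th strand is precisely the $j$-th tensor factor of $\mathfrak{Z}_A(T)$, since this is exactly how the universal invariant is read off from a decorated diagram. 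Hence the normal form of $\mathsf{Dec}_A(T)$ is a function of $(\mathfrak{Z}_A(T), \sigma_T) = Z_A(T)$ alone. Verifying that the relations of $s\Tup(A)$ are genuinely strong enough to collapse an arbitrary flat diagram to its permutation-with-beads representative is the delicate point, and is where the detailed bookkeeping of the Kerler-Kauffman-Radford construction enters.

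Granting well-definedness, the remaining assertions are essentially formal. Functoriality and strict monoidality of $\iota$ transfer from those of $\mathsf{Dec}_A$ and $Z_A$: choosing preimages $T_1, T_2$ of composable morphisms $m_1, m_2$, the tangle $T_1 \circ T_2$ is a preimage of $m_1 \circ m_2$ because $Z_A$ is a functor, so $\iota(m_1 \circ m_2) = \mathsf{Dec}_A(T_1 \circ T_2) = \mathsf{Dec}_A(T_1)\circ \mathsf{Dec}_A(T_2) = \iota(m_1)\circ\iota(m_2)$, and likewise for tensor products and units — well-definedness is what makes these computations independent of the chosen preimages. Finally, faithfulness, needed to make the arrow a genuine inclusion, is the converse reading: from $\mathsf{Dec}_A(T)$ one recovers $\sigma_T$ as the underlying permutation and $\mathfrak{Z}_A(T)$ as the vector of per-strand bead products, both invariants under the relations of $s\Tup(A)$; thus $\mathsf{Dec}_A(T) = \mathsf{Dec}_A(T')$ forces $(\mathfrak{Z}_A(T),\sigma_T) = (\mathfrak{Z}_A(T'),\sigma_{T'})$, so $\iota$ separates distinct morphisms. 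Commutativity being built into the definition of $\iota$, this completes the plan.
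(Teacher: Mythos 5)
Your proposal is correct, and its essential technical content coincides with the paper's: both arguments rest on the fact that a decorated singular diagram is determined, up to the relations of $s\Tup(A)$, by its underlying permutation together with the per-strand bead products, i.e.\ by the pair $(\mathfrak{Z}_A(sD),\sigma_{sD})$. The organisation differs, though. The paper packages this normal-form statement as an isomorphism $s\Tup(A)\cong\widetilde{s\Tup(A)}$, where $\widetilde{s\Tup(A)}$ is the bundle monoidal category with endomorphism monoids the full $A^{\otimes n}\times\SS_n$ (same composition law as in $\mathcal{E}(A)$); the embedding $\mathcal{E}(A)\hookrightarrow s\Tup(A)$ is then literally the inclusion of the submonoids $\mathcal{I}_n\subset A^{\otimes n}\times\SS_n$, and commutativity of the triangle is immediate since both composites send $T$ to $(\mathfrak{Z}_A(T),\sigma_T)$. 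You instead define the embedding by transport of structure along the full functor $Z_A$, which makes commutativity tautological but shifts all the work into well-definedness and faithfulness --- and the verification you need there (reduction to the flat diagram of $\sigma_T$ carrying one bead per strand, with the caveat that $\mathfrak{Z}_A(T)$ is a sum of tensors handled by the multilinearity relations) is exactly the paper's proof that $s\Tup(A)\to\widetilde{s\Tup(A)}$ is fully faithful. The paper's route has the mild advantage that functoriality and monoidality of the embedding are inherited from the explicit algebraic model rather than re-derived via choices of preimages, but the two arguments are interchangeable and your flagged ``delicate point'' is the same one the paper's lemma addresses.
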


According to the previous theorem, our functorial universal invariant $Z_A: \Tup \to  \mathcal{E}(A)$ can be seen as a refinement of Kerler-Kauffman-Radford functor, in the sense that  $\mathsf{Dec}_A$ factors through a functor which does preserve the braiding, twist and open trace of $\Tup$.

\subsection{Comparison with the Reshetikhin-Turaev invariant}

We would like to explain how the functorial universal invariant $Z_A$  relates to several other functors that we can also construct when $A$ is a ribbon Hopf algebra and $W$ is a given finite-dimensional $A$-module. We can assemble all relations in the following commutative prism (\cref{cor:prism}), that we explain below:

$$
\begin{tikzcd}
\Tup \arrow[rrrr,"RT_W"] \arrow[rrd,"Z_A"] \arrow[dd,hook] &  &               &  & (\mathsf{fMod}_A^{\mathrm{str}})_W  \arrow[dd,hook] \\
                                       &  & \mathcal{E}(A) \arrow{rru}{\rho_W} &  &                           \\
\T^+ \arrow[rrrr,pos=0.6,"RT_W"] \arrow[rrd,"Z_{\eend{\Bbbk}{W}}"]            &  &               &  & \mathsf{fMod}_A^{\mathrm{str}}            \\
                                       &  & \mathcal{E}(\eend{\Bbbk}{W}) \arrow[hook]{rru}{\iota_W} \arrow[from=uu,pos=0.3,crossing over]        &  &                          
\end{tikzcd}
$$

The back face of the prism expresses that the unique strict monoidal functor $RT_W: \Tup \to (\mathsf{fMod}_A^{\mathrm{str}})_W $ obtained by applying the universal property of $\Tup$ to the open-traced monoidal category $(\mathsf{fMod}_A^{\mathrm{str}})_W $ obtained from $\mathsf{fMod}_A^{\mathrm{str}}$ is simply the restriction of the usual Reshetikhin-Turaev invariant to the category of upwards tangles (\cref{lemma:restriction}).

The commuting top face of the prism is the universality of $Z_A$ with respect to the Reshetikhin-Turaev invariants: the functor $RT_W$ factors through $Z_A$ (\cref{thm:ZA_universal}). The functor $\rho_W:\mathcal{E}(A) \to (\mathsf{fMod}_A^{\mathrm{str}})_W $ is induced by the $A$-module structure morphism $\rho: A \to \eend{\Bbbk}{W}$, hence the name. Furthermore, all functors in this diagram preserve the open-traced structure.

The commutativity of the bottom face of the prism expresses that, if we consider over the endomorphism algebra $\eend{\Bbbk}{W}$ the XC-algebra structure inherited from $A$, then the functorial universal invariant $Z_{\eend{\Bbbk}{W}}$ is essentially the same as the Reshetikhin-Turaev invariant (\cref{thm:RTV_ZEndV}).  The functor $\iota_W:  \mathcal{E}(\eend{\Bbbk}{W}) \hooklongrightarrow  \mathsf{fMod}_A^{\mathrm{str}} $ is a monoidal embedding, which is induced by the canonical isomorphism $\eend{\Bbbk}{W}\otimes \eend{\Bbbk}{W} \cong \eend{\Bbbk}{W\otimes W}$. Even more, all functors in the bottom face of the prism preserve the traced structure.

The structure morphism  $\rho: A \to \eend{\Bbbk}{W}$ clearly induces a strict monoidal functor $ \mathcal{E}(A) \to \mathcal{E}( \eend{\Bbbk}{W})$. The commutativity of the left-hand face of the prism affirms that composing $Z_A$ with this functor is essentially the functor $Z_{\eend{\Bbbk}{W}}$, and the commutativity of the right-hand face says that the composite of the functor $ \mathcal{E}(A) \to \mathcal{E}( \eend{\Bbbk}{W})$ with $\iota_W$ is essentially the functor $\rho_W$.

\subsection*{Organisation of the paper} 

In \cref{sec:upwards_tangles}, we recall basics on tangle categories and ribbon Hopf algebras, putting emphasis on the universal properties that these categories satisfy. In \cref{sec:upwards}, we study the category $\Tup$ of upwards tangles, making use of rotational diagrams. We also define rigorously open traced monoidal categories and show that $\Tup$ is the free such category generated by a single object. Next in \cref{sec:universal_inv} we define XC-algebras, give several examples and construct an open-traced monoidal category $\mathcal{E}(A)$ for every XC-algebra $A$, as well as a traced monoidal category $\mathcal{E}(A)$ for every traced XC-algebra $A$. Lastly we compare the functorial universal tangle invariant $Z_A$ with Kerler-Kauffman-Radford ``decoration functor''. Finally in \cref{sec:universality} we discuss a functorial statement about the universality of $Z_A$ with respect to Reshetikhin-Turaev invariants and show that for a representation $W$ of a ribbon Hopf algebra, the functorial universal invariant $Z_{\eend{\Bbbk}{W}}$ is essentially the same as the Reshetikhin-Turaev invariant $RT_W$.

\subsection*{Acknowledgments} The author would like to thank  Roland van der Veen for many valuable discussions about the content of this paper, as well as to Sofia Lambropoulou, Luis Paris  and Lukas Woike for helpful conversations.   Most of the content of this paper is taken from the author's PhD thesis \textit{Universal quantum knot invariants}, written at the University of Groningen.  The author was supported by the ARN project CPJ number ANR-22-CPJ1-0001-0  at the Institut de Mathématiques de Bourgogne (IMB). The IMB receives support from the EIPHI Graduate School (contract ANR-17-EURE-0002).


\section{Categories of tangles}\label{sec:upwards_tangles}

In this section we will recollect basic definitions and facts about several categories of braids and tangles that will heavily perform in the rest of the paper.

\subsection{Tangles in a cube}

Let $n,m \geq 0$ be  non-negative integers. A  \textit{(framed, oriented)  tangle} \index{tangle} is an isotopy class of an embedding $$T:  \left( \coprod_n D^1 \times D^1 \right) \amalg  \left( \coprod_m D^1 \times S^1 \right)  \hooklongrightarrow (D^1)^{\times 3}$$
with the property that it restricts to a orientation-preserving homeomorphism $$\coprod_n   D^1 \times \{-1, 1 \}       \toiso \Big(   \bigcup_{i=1}^{n_1} F_i \Big)  \cup  \Big(   \bigcup_{i=1}^{n_2} H_i \Big)  $$
where $n_1, n_2  \geq 0$, $ n_1+n_2=2n$,  $F_i := \left[ \frac{2i-1}{2n_1+1} , \frac{2i}{2n_1+1} \right] \times 0 \times -1 \subset  (D^1)^{\times 3}$ and $H_i := \left[ \frac{2i-1}{2n_2+1} , \frac{2i}{2n_2+1} \right] \times 0 \times 1 \subset  (D^1)^{\times 3}$, with the orientations on $D^1 \times \pm 1$ induced by the usual one in $D^1 \times D^1$ and the orientations on $F_i$ and $H_i$ are induced by the ones of the positive and negative direction, respectively.  The isotopy is understood to be relative to $\coprod_n (D^1 \times \pm 1  )$. Moreover, the cores of the strips $ 0 \times D^1 $ and annuli $0 \times S^1$ are also endowed with an orientation. If $m=0$, we say that the tangle is \textit{open}; and if  $n=0$ then we talk of a framed oriented link.

For a tangle $T$, we can talk about how twisted each component is. More precisely, the \textit{framing}\index{framing} of a closed component $T(D^1 \times S^1)$ is the linking number of the two-component link $T(-1 \times S^1) \cup T(1\times S^1)$. For an open component, its framing is the framing of the closed one obtained by closing up the component with a strip which lies in the plane $xz$.

Since it would be rather cumbersome to draw strips and annuli in pictures, it is customary to keep only the cores of these, as follows: first, isotope the strips and annuli  of the tangle so that each of the embedded segments $D^1 \times t$ is parallel to the plane $xz$ (i.e., so that the same side always faces the reader) and the cores of these are in general position with respect to the projection onto the plane $xz$. The projection of these cores on the square $D^1 \times 0 \times D^1$ is called a \textit{tangle diagram}. Below is an example:

\begin{equation*} 
\centering
\includegraphics[width=0.65\textwidth]{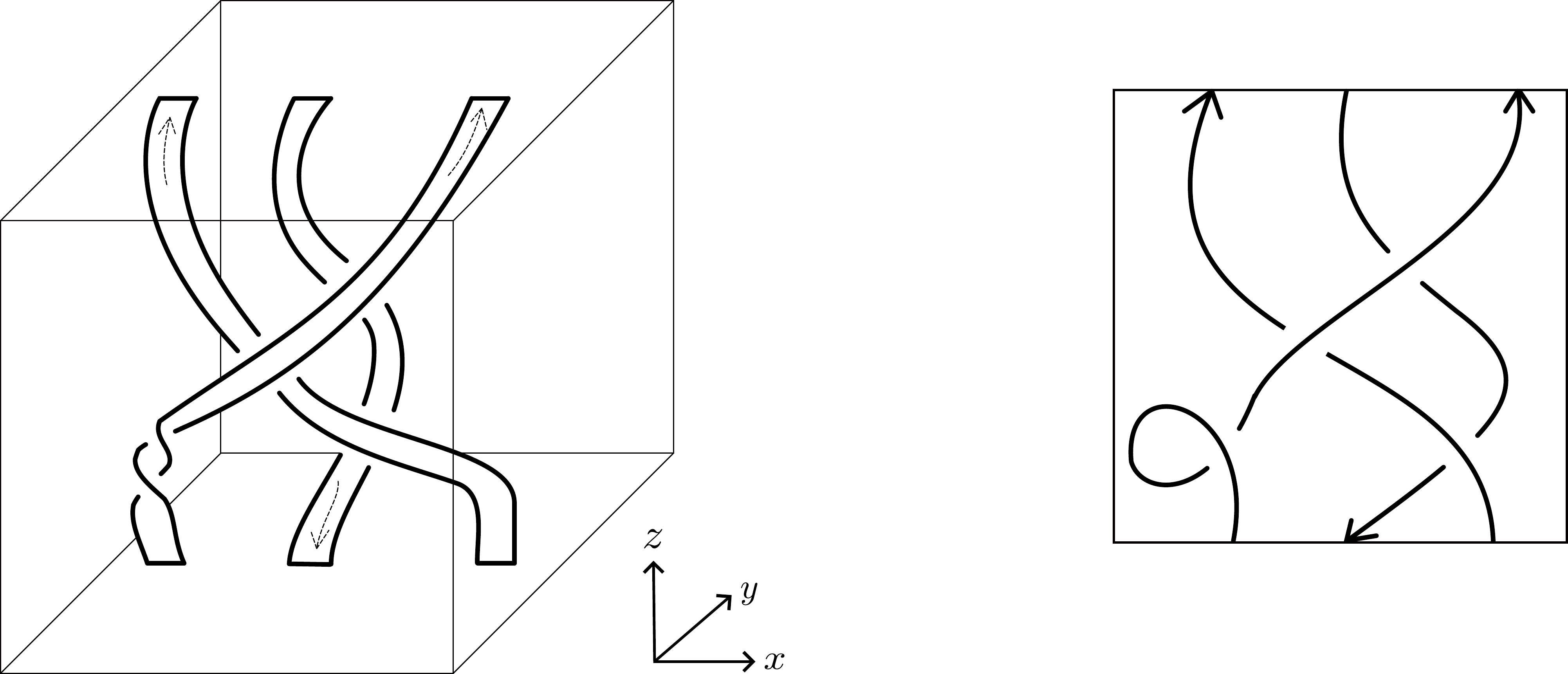}
\end{equation*}

Conversely, given a tangle diagram of a framed tangle, we will always assume that its framing is given by the \textit{blackboard framing}, that is, that the strands of the diagrams are the cores of a framed tangle whose strips are parallel to the plane $xz$. The classical Reidemeister theorem says, in the framed case, that passing to tangle diagrams induces a bijection
$$\begin{tikzcd}[column sep=3em]
\frac{ \left\{ \parbox[c]{6em}{\centering     {\small  \textnormal{tangles in $(D^1)^{\times 3}$ }}} \right\} }{  \parbox[c]{6em}{\centering \vspace*{2pt} {\small  \textnormal{isotopy rel. endpoints} }}} \ar[-,double line with arrow={-,-}]{r} &    \frac{ \left\{ \parbox[c]{8em}{\centering     {\small  \textnormal{tangle diagrams in $D^2$ }}} \right\} }{ \parbox[c]{8em}{\centering  \vspace*{2pt} {\small  \textnormal{framed Reidemeister moves  and planar isotopy} }}}
\end{tikzcd}$$

\subsection{The category \texorpdfstring{$\T$}{T} of tangles} 

It is well-known that the set of tangles can be organised into a strict monoidal category. Let $\mathrm{Mon}(+,-)$ be the free monoid on the set $\{+,- \}$. Given a tangle $T$, assign to every $F_i$ and $H_j$  the symbol $+$ or $-$  depending on whether $T$ points upwards or downwards, respectively. This assignment defines two elements $s(T), t(T) \in \mathrm{Mon}(+,-)$ of lengths $n_1 $ and $n_2$  called the \textit{source} \index{source of a tangle} and the \textit{target} \index{target of a tangle} of $T$.

The category $\mathcal{T}$  of tangles is defined to have objects $\mathrm{Mon}(+,-)$ and morphisms $\hom{\mathcal{T}}{s}{t}$ the set of isotopy rel. endpoint classes of tangles $T$ such that $s= s(T)$ and $t=t(T)$. The composite $T_2 \circ T_1$ of tangles $T_1, T_2$ is the tangle resulting from stacking $T_2$ on top of $T_1$, and the identity of a word $w \in \mathrm{Mon}(+,-)$  is the tangle $\uparrow_w$ given by a number of parallel, vertical strands with orientations determined by $w$. The monoidal product is given by concatenation of words at the level of the object, and at the level of morphisms $T_1 \otimes T_2$ is the tangle resulting from placing $T_2$ to the right of $T_1$ (and normalising the length of the cube),
\begin{equation*}
T_2 \circ T_1 = 
\centre{
\labellist \small \hair 2pt
\pinlabel{$ T_1$}  at 58 65
\pinlabel{$ T_2$}  at 58 188
\endlabellist
\centering
\includegraphics[width=0.05\textwidth]{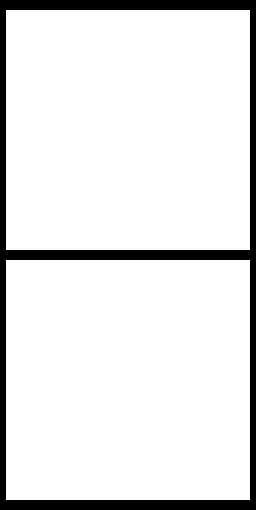}} \qquad , \qquad 
T_1  \otimes T_2 = \centre{
\labellist \small \hair 2pt
\pinlabel{$ T_1$}  at 58 64
\pinlabel{$ T_2$}  at 177 64
\endlabellist
\centering
\includegraphics[width=0.1\textwidth]{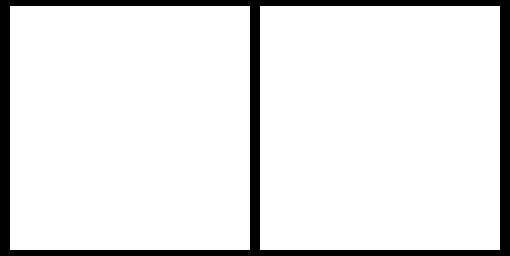}}
\end{equation*}
The unit object is the empty word, that is, the unit of $\mathrm{Mon}(+,-)$. Note that $\hom{\T}{\emptyset}{\emptyset}$ is precisely the set of framed, oriented links.

The following is a folklore result:

\begin{theorem}[\cite{turaev89,kassel, KRT,ohtsukibook}]\label{thm:T_presentation}
The category $\mathcal{T}$ is monoidally generated by the objects $+,-$ and the morphisms $  \PC$ , $\NC$ , $\cupr$ , $\cupl$ ,  $\capr$ , $\capl$ shown below:
\begin{equation*} 
\centre{
\centering
\includegraphics[width=0.6\textwidth]{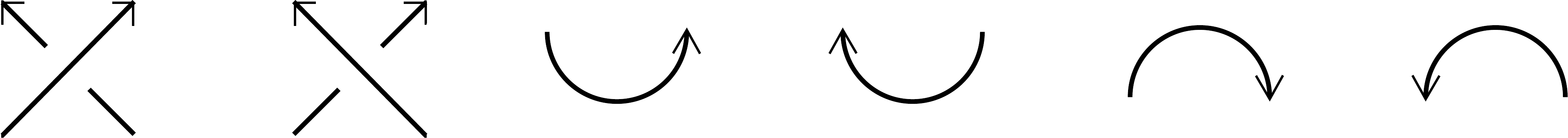}}
\end{equation*}
\end{theorem}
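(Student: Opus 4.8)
The plan is to reduce the statement to a combinatorial fact about tangle diagrams and then exploit the height function. By the Reidemeister theorem recalled above, every morphism of $\T$ is represented by a tangle diagram in the square $D^1 \times 0 \times D^1$. Since we only aim to \emph{generate} $\T$, rather than to give a presentation, it is enough to know that every tangle admits a diagram and to decompose an arbitrary such diagram as a vertical composite of tensor products of $\PC$, $\NC$, $\cupr$, $\cupl$, $\capr$, $\capl$ and the identities $\uparrow_{+}, \uparrow_{-}$; the Reidemeister moves enter only in the passage between tangles and diagrams.

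First I would place a given diagram in generic position with respect to the height function $h$ given by the projection onto the last ($z$-) coordinate, so that the source strands sit at $h=-1$ and the target strands at $h=+1$. By a small planar isotopy one can arrange that $h$ restricts to a Morse function on the underlying $1$-manifold, that all crossings are transverse double points, and that the finitely many distinguished heights — the critical values of $h$ (local maxima and minima) together with the heights of the crossings — are pairwise distinct and lie in the interior of $D^1$. Choosing regular values of $h$ strictly between consecutive distinguished heights and cutting along the corresponding horizontal lines slices the diagram into finitely many horizontal bands, each of which, up to planar isotopy, contains exactly one elementary event flanked by vertical strands.

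The next step is to identify each band with a tensor product of a single generator and identity morphisms. A band containing a local minimum is, up to planar isotopy, $\uparrow_{w_1} \otimes c \otimes \uparrow_{w_2}$ where $c$ is a cup $\cupr$ or $\cupl$ according to the orientation, a maximum gives a cap $\capr$ or $\capl$, and a band containing a crossing gives a crossing generator tensored with identities. The whole diagram is then the vertical composite in $\T$ of these bands, hence lies in the monoidal subcategory generated by the listed morphisms, as desired.

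The main obstacle is the orientation bookkeeping at the crossings. In generic position the two strands through a crossing may each point up or down, so a priori one meets crossings in all four orientation patterns and of either sign, whereas the generating set contains only the two upward-oriented crossings $\PC$ and $\NC$. The key lemma is therefore that every oriented crossing is itself a composite of tensor products of $\PC$, $\NC$ and the cups and caps: one bends each downward-pointing strand around a neighbouring maximum and minimum — that is, conjugates by the duality morphisms $\cupr, \cupl, \capr, \capl$ — turning the given crossing into a standard upward one. Checking that this rewriting realises the correct framed, oriented crossing and correctly handles both signs is the crux of the argument; once it is in place, the slicing above expresses every tangle in terms of the six generators, completing the proof.
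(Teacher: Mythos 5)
The paper states this result as folklore and gives no proof, deferring to the cited references (Turaev, Kassel, KRT, Ohtsuki); your argument — generic position with respect to the height function, slicing into bands each containing a single Morse event or crossing, and the rotation trick expressing crossings with downward-pointing strands as composites of $\PC$, $\NC$ with cups and caps — is precisely the standard proof found there. Your proposal is correct and identifies the right crux (the orientation bookkeeping at crossings), so there is nothing to add.
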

\vspace{0.3cm}\noindent

The category $\mathcal{T}$ enjoys a remarkable universal property, that we now discuss. A (strict) \textit{ribbon} \index{ribbon category} or \textit{tortile category} is a  strict monoidal category $(\mathcal{C}, \otimes, \bm{1})$ with the additional data of
\begin{enumerate}
\item A \textit{braiding}\index{braiding}, that is, a family of natural isomorphisms $\tau_{X,Y}: X \otimes Y \toiso Y \otimes X$ satisfying $$ \tau_{X, Y \otimes Z}= (\id_Y \otimes \tau_{X,Z}) (\tau_{X,Y} \otimes \id_Z) \quad , \quad  \tau_{X \otimes Y , Z}= (\tau_{X,Y} \otimes \id_Y)(\id_X \otimes \tau_{Y,Z}) .$$
\item A \textit{(left) rigid structure} \index{rigid structure} or duality $X\rightsquigarrow X^*$ with a morphism or \textit{pairing}\index{pairing} $$ \mathrm{ev}_X : X^* \otimes X \to \bm{1} ,$$ called the \textit{left-evaluation}\index{left-evaluation}, which is \textit{non-degenerate}\index{non-degenerate} in the sense that there exists another morphism
$\mathrm{coev}_X : \bm{1} \to X \otimes X^* $, called the \textit{left-coevaluation}\index{left-coevaluation},
satisfying that
$$ (\id_X \otimes \mathrm{ev}_X ) (\mathrm{coev}_X \otimes \id_X) = \id_X  \ , \  (\mathrm{ev}_{X^*} \otimes \id_{X^*} ) (\id_{X^*} \otimes \mathrm{coev}_{X^*}) = \id_{X^*}. $$
\item A \textit{twist}\index{twist}, that is, a family of natural isomorphisms $\theta_X : X\toiso X$ such that $$\theta_{\bm{1}}= \id_{\bm{1}} \quad , \quad   \theta_{X \otimes Y} =    (\theta_Y \otimes \theta_X) \tau_{Y, X}\tau_{X,Y} ,$$
\end{enumerate}
such that the twist and rigid structure are compatible in the sense that $\theta_{X^*} = \theta_X^*$.  The braiding and the twist imply that a ribbon category is in fact \textit{pivotal}\index{pivotal structure}, that is, it is also endowed with a right duality and the left and right dual functors coincide. More precisely, the right rigid structure is determined by considering the same right dual objects as the left duals and as right evaluation of an object $X \in \C$
\begin{equation}\label{eq:right_evaluation}
\widetilde{\mathrm{ev}}_X := \mathrm{ev}_X  \tau_{X,X^*} (\theta_X \otimes \id_{X^*})     : X \otimes X^* \to \bm{1}.
\end{equation}

It is well-known that the tangle category $\mathcal{T}$ is a ribbon category where the braiding is determined by $\tau_{+,+}= \PC$, the left rigid structure is determined by $+^* =-$ and $\mathrm{ev}_+= \capl$ and $\mathrm{coev}_+ = \cupl$, and the twist is determined by  $\theta_+ =  \begin{array}{c}
\centering
\includegraphics[scale=0.035]{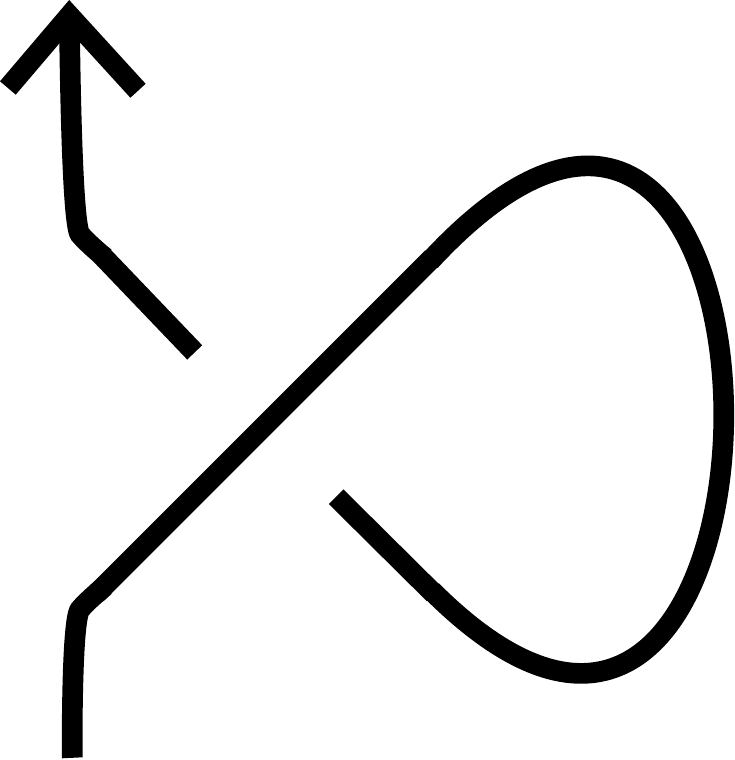} \end{array}$.

\begin{theorem}[\cite{kassel, KRT,turaev}]\label{thm:turaev}
Let $\C$ be a  strict ribbon category and let $X \in \C$ be an object. Then there exists a unique strict monoidal functor $$RT_X: \mathcal{T} \to \C$$ such that $RT_X(+)=X$ and $RT_X$ preserves the ribbon structure, that is, $RT_X(-)= X^*$ and $$RT_X(\, \PC \,)= \tau_{X,X} \ , \ RT_X(\begin{array}{c}
\centering
\includegraphics[scale=0.035]{figures/twist_gen} \end{array})= \theta_X  \ , \ RT_X(\, \capl \,)= \mathrm{ev}_X \ , \ RT_X(\, \cupl \, )= \mathrm{coev}_X.$$
In other words, $\mathcal{T}$ is the ``free strict ribbon category generated by a single object''.
\end{theorem}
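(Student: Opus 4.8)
The plan is to prove the two assertions—uniqueness and existence of $RT_X$—separately, leveraging the monoidal generation statement of Theorem~\ref{thm:T_presentation} together with a presentation of $\mathcal{T}$ by generators and relations. For \emph{uniqueness}, I would use that a strict monoidal functor is determined by its values on a monoidal generating set. By Theorem~\ref{thm:T_presentation} the objects $+,-$ and the six morphisms $\PC,\NC,\cupr,\cupl,\capr,\capl$ generate $\mathcal{T}$, so it suffices to observe that the requirement of preserving the ribbon structure forces the value of $RT_X$ on each of them. Indeed $+\mapsto X$ and $-\mapsto X^*$ are imposed, and $\PC\mapsto\tau_{X,X}$, $\capl\mapsto\mathrm{ev}_X$, $\cupl\mapsto\mathrm{coev}_X$ are imposed directly; the remaining generators are then determined, since $\NC$ must go to the inverse braiding $\tau_{X,X}^{-1}$ (the braiding being an isomorphism), while the right duality morphisms $\capr,\cupr$ are forced by the fact that in a ribbon category the right rigid structure is recovered from the left one via the braiding and twist, as in \eqref{eq:right_evaluation}. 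Hence the functor is unique if it exists.

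For \emph{existence}, I would first define $RT_X$ on objects by sending a word $w\in\mathrm{Mon}(+,-)$ to the corresponding tensor product of copies of $X$ and $X^*$, and on the generating morphisms by the prescribed assignment above. Every tangle diagram can be sliced by horizontal lines into a vertical composite of horizontal tensor products of these generators and identity strands, so this data extends to a candidate value $RT_X(T)\in\C$ for every morphism $T$ of $\mathcal{T}$. The substance of the proof is to check that this value is well defined, that is, independent of the chosen slicing and invariant under the relations that hold in $\mathcal{T}$.

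By the framed Reidemeister theorem recalled above, two diagrams represent the same tangle precisely when they differ by planar isotopy and framed Reidemeister moves, so it is enough to verify that $RT_X$ sends each such move to a valid identity in $\C$. The planar-isotopy moves amount to the bifunctoriality of $\otimes$, which holds because $\C$ is strict monoidal. Each framed Reidemeister move then translates into one of the defining axioms of a ribbon category: Reidemeister~II expresses that the images of $\PC$ and $\NC$ are mutually inverse; Reidemeister~III is the Yang--Baxter/hexagon identity for $\tau$; the straightening (``zig-zag'') moves are the rigidity axioms relating $\mathrm{ev}_X$ and $\mathrm{coev}_X$; the moves sliding a strand across a crossing or across a cup/cap are the naturality of the braiding and of the twist; and the framed Reidemeister~I move, in which a curl equals the twist generator, together with its mirror and reversed-orientation variants, corresponds to the twist axioms $\theta_{X\otimes Y}=(\theta_Y\otimes\theta_X)\tau_{Y,X}\tau_{X,Y}$ and to the duality compatibility $\theta_{X^*}=\theta_X^*$. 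Since $\C$ is assumed to be a ribbon category, every such identity holds, so $RT_X$ respects all relations and defines a strict monoidal functor; by construction it preserves the ribbon structure.

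I expect the genuine obstacle to lie not in any single one of these verifications but in assembling a \emph{complete} explicit finite presentation of $\mathcal{T}$—upgrading the generation statement of Theorem~\ref{thm:T_presentation} to a full list of defining relations whose sufficiency is guaranteed by the coherence theorem for ribbon (tortile) categories—and in the careful bookkeeping needed to match each relation to the correct axiom while tracking the four distinct cups and caps and all orientations. Once such a presentation is fixed, each relation becomes a routine consequence of the axioms in the definition of a ribbon category, and the theorem follows; I would cite the references listed for the presentation and its completeness rather than reproduce the full diagrammatic check.
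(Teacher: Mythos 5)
The paper does not prove this statement at all: it is quoted as a classical theorem with citations to Kassel, Kassel--Rosso--Turaev and Turaev, so there is no internal proof to compare against (the author later writes only that the strategy is ``see e.g.\ \cite[\S I.3--4]{turaev}''). Your sketch is the standard argument from those references and is correct in outline: uniqueness follows from the generation statement of \cref{thm:T_presentation} once one notes that $\NC$ is the two-sided inverse of $\PC$ in $\mathcal{T}$ and that $\capr$, $\cupr$ are expressible from $\capl$, $\cupl$, the braiding and the twist via \eqref{eq:right_evaluation}; existence reduces to verifying a complete list of relations against the ribbon axioms. You correctly locate the genuine content in the completeness of the presentation (Turaev's presentation theorem, equivalently Shum's coherence theorem) and, like the paper, defer that to the literature; the only cosmetic quibble is that the zig-zag/straightening identities belong to the Morse-theoretic planar-isotopy moves rather than to the Reidemeister moves proper, which does not affect the argument.
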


We call the functor of the previous theorem the \textit{Reshetikhin-Turaev invariant} associated to $X \in \C$. This functor also provides a graphical calculus for ribbon categories, see \cite[Ch. 3]{turaevvirelizier}. The only difference with the conventions therein is that we orient the strands naturally from bottom to top, so that the source object $X$ of a morphism $f: X\to Y$ corresponds with the tail of the arrow and the target object $Y$ with the head.

\subsection{Ribbon Hopf algebras}\label{subsec:ribbon_algebras}

\cref{thm:turaev} asserts that in order to get a functorial tangle invariant (in particular a link invariant) that can be computed ``by pieces'' all we need is to find a ribbon category. The archetypal example of such a  category is the category of finite-free left-modules of a \textit{ribbon Hopf algebra}, that we recall next.

Let $(A, \mu, \eta, \Delta, \varepsilon, S)$ be  a Hopf algebra over some commutative ring $\Bbbk$. We will always assume that the antipode $S$ is invertible (this is automatic if $A$ is finite-free as a $\Bbbk$-module, that is, free and of finite rank, see \cite{pareigis}).

A \textit{quasi-triangular structure} over $A$ is a choice of an invertible element $R \in A \otimes A$, called the \textit{universal $R$-matrix}, satisfying the following properties:
\begin{enumerate}[leftmargin=4\parindent, itemsep=2mm]
\item[(QT1)] $(\Delta \otimes \id)(R) = R_{13}\cdot R_{23}$,
\item[(QT2)] $(\id \otimes \Delta)(R)=R_{13} \cdot R_{12}$,
\item[(QT3)] $  \Delta^{\mathrm{op}}  =   R \cdot \Delta(-) \cdot  R^{-1} , $
\end{enumerate}
where we have written $R_{12}=R\otimes 1$, $R_{13}=(\id_A \otimes \mathrm{flip}_{A,A})R_{12}$ and $R_{23}=1 \otimes R$. Here flip denotes the symmetric braiding of the category of $\Bbbk$-modules.
A Hopf algebra endowed with a quasi-triangular structure is called a \textit{quasi-triangular Hopf algebra}. 

It is readily seen that the universal $R$-matrix satisfies the \textit{Yang-Baxter equation }
\begin{equation}\label{eq:YB}
R_{12}R_{13}R_{23}=R_{23}R_{13}R_{12}
\end{equation}
and besides we have
\begin{equation}\label{eq:(S otimes id)R}
(S \otimes \id)(R)=R^{-1} \quad , \quad (\id \otimes S)(R^{-1})=R \quad , \quad (S \otimes S)(R^{\pm 1})=R^{ \pm 1}
\end{equation}
(the third is a consequence of the other two). We will typically write $$R= \sum_i \alpha_i \otimes \beta_i \qquad , \qquad R^{-1}= \sum_i  \bar{\alpha}_i \otimes \bar{\beta}_i $$ for the universal $R$-matrix and its inverse.

Let $(A,R)$ be a quasi-triangular Hopf algebra. The \textit{Drinfeld element} of the structure is the element $$u := \mu^{\mathrm{op}}(\id \otimes S)(R) = \sum_i S(\beta_i)\alpha_i.   $$ This element is known to be invertible with inverse $$u^{-1}= \mu^{\mathrm{op}}(S^2 \otimes \id)(R) = \sum_i \beta_i S^2(\alpha_i).    $$
 A \textit{ribbon structure} over $(A,R)$ is a choice of a grouplike square root of  $u S(u^{-1}) $ that implements $S^{2}$ by conjugation. More precisely, a ribbon structure is a choice of an element $\kappa \in A$ satisfying
\begin{enumerate}[leftmargin=4\parindent, itemsep=2mm]
\item[(R1)] $\kappa^2 = uS(u^{-1}) $,
\item[(R2)] $\Delta (\kappa) = \kappa \otimes  \kappa$,
\item[(R3)] $S^2 = \kappa\cdot (-) \cdot \kappa^{-1}$.
\end{enumerate}
Note that (R2) implies that $\varepsilon(\kappa)=1$ and that $\kappa$ is invertible with $S(\kappa)=\kappa^{-1}$. The element $\kappa$ will be called the \textit{balancing element}\index{balancing element} of the ribbon structure, and the triple $(A,R,\kappa)$ a \textit{ribbon Hopf algebra}\index{ribbon Hopf algebra}. It follows from \eqref{eq:(S otimes id)R} and (R3) that
\begin{equation}
(\id \otimes S)(R)= (S \otimes S^2)(R)= (\id \otimes S^2)(R^{-1})= \sum_i \bar{\alpha}_i \otimes \kappa \bar{\beta}_i \kappa^{-1},   
\end{equation}
and similarly
\begin{equation}
(S \otimes \id)(R^{-1})=  \sum_i \kappa \alpha_i \kappa^{-1}\otimes \beta_i,   
\end{equation}

The element $v :=  \kappa^{-1} u$ is classically called the \textit{ribbon element} (hence the name of the algebraic structure).  It can be shown \cite{kauffman_radford} that, for a quasi-triangular Hopf algebra, the set of axioms (R1)--(R3) for the balancing element are equivalent to the usual set of axioms
\begin{equation}\label{eq:ribbon_elmt_axioms}
 v \in \mathcal{Z}(A) \ \ , \ \ v^2 = u S(u)  \ \ , \ \  \Delta(v)=(R_{21}R)^{-1}(v \otimes v)  \ \ , \ \  \epsilon(v)=1  \ \ , \ \  S(v)=v
\end{equation}
for the ribbon element, where $\mathcal{Z}(A)$ denotes the centre of $A$.

Given a $\Bbbk$-algebra $A$, we write $\mathsf{fMod}_A$ for the category of finite-free left $A$-modules. It is  folklore that a ribbon structure $(R,\kappa)$ over $A$ induces structure of ribbon category over $\mathsf{fMod}_A$, as follows: given $A$-modules $V,W$, the $\Bbbk$-linear tensor product $V \otimes W$ is an $A$-module with product given by $a \cdot (v \otimes w):= \Delta (a) (v \otimes w)$. The base ring $\Bbbk$ is viewed as an $A$-module via $a \cdot \lambda := \varepsilon(a)  \lambda$, and it is the unit of the monoidal structure. The axioms (QT1)--(QT3) imply that for $A$-modules $V,W$, the map
\begin{equation}\label{eq:braiding_fMod}
\tau_{V,W}: V \otimes W \to W \otimes V \qquad , \qquad \tau_{V,W}(v \otimes w) := R_{21}(w \otimes v) 
\end{equation}
is $A$-linear and defines a braiding on $\mathsf{fMod}_A$. On the other hand, the $\Bbbk$-linear dual $V^*:= \hom{\Bbbk}{V}{\Bbbk}$ of $V$ can be endowed with a left $A$-module structure given by $(a\cdot \omega)(v):= \omega (S(a)v)$. Lastly, there is a twist $\theta_W: W \to W$ given by multiplication by the inverse of the ribbon element,
\begin{equation}\label{eq:twist_fMod}
\theta_W(w):=v^{-1}w.
\end{equation} 
 In fact, for a finite-free $\Bbbk$-algebra $A$, a ribbon category structure on $\mathsf{fMod}_A$ uniquely determines a ribbon Hopf algebra on $A$, see e.g. \cite{becerra_thesis} for a detailed discussion.

We would like to emphasise that the category $\mathsf{fMod}_A$ is not strict (because the canonical $\Bbbk$-linear isomorphism $(V_1 \otimes V_2) \otimes V_3 \cong V_1 \otimes (V_2 \otimes V_3)$ is not the identity map)  and therefore, stricto senso, $\mathsf{fMod}_A$ cannot be used directly in \cref{thm:turaev}. However, this is a non-issue due to the celebrated Mac Lane's  strictness theorem \cite{maclane}, which ensures that there exists a strict monoidal category $\mathsf{fMod}_A^{\mathrm{str}}$  together with a  monoidal equivalence $ \mathsf{fMod}_A \overset{\simeq}{\to} \mathsf{fMod}_A^{\mathrm{str}}$ (see also \cite{becerra_strictification} for a more modern monograph). In particular, this means that $\mathsf{fMod}_A^{\mathrm{str}}$ admits a  ribbon structure such that the previous is an equivalence of ribbon categories. The strict ribbon category $\mathsf{fMod}_A^{\mathrm{str}}$  is the desired category.

We will always assume the  model for the strictification $\mathsf{fMod}_A^{\mathrm{str}}$ based on sequences of objects \cite{maclane,kassel,becerra_strictification}: the objects of $\mathsf{fMod}_A^{\mathrm{str}}$ are sequences $S=(V_1, \ldots, V_n)$ of finite-free $A$-modules , $n \geq 0$ (this includes the empty sequence $\emptyset$). If the \textit{parenthesisation} of a sequence $S$ is the $A$-module
\begin{equation*}
Par(S) := ( \cdots (V_1 \otimes V_2) \otimes V_3) \otimes \cdots  ) \otimes V_n
\end{equation*}
for any sequence $S$ of length $n >0$ and $Par(\emptyset): =\bm{1}$, define 
\begin{equation}\label{eq:1}
\hom{\mathsf{fMod}_A^{\mathrm{str}}}{S}{S'} := \hom{\mathsf{fMod}_A}{Par(S)}{Par(S')},
\end{equation}
with identities and composition taken from those of $\mathsf{fMod}_A$. The monoidal structure is given by concatenation of sequences, with unit the empty sequence. We view any object $X$ of $\C$ in  $\Cstr$ as a one-object sequence.

\subsection{The category \texorpdfstring{$\T^+$}{T+} }\label{subsec:T+}

The category $\T$ of tangles contains important subcategories that we would like to recall next. Let us write $\T^+$ for the full subcategory of $\T$ on the objects $\mathrm{Mon}(+) \subset \mathrm{Mon}(+,-)$. The main difference of this category with respect to $\T$ is that isolated cups and caps $\cupr$ , $\cupl$ ,  $\capr$ , $\capl$ are not allowed, although they can still appear in a decomposition of an arrow in $\T^+$. In particular, closed components are allowed and $\hom{\T^+}{\emptyset}{\emptyset}$ is the set of framed, oriented links.

The category $\T^+$ satisfies a universal property, that we now discuss. First recall that a monoidal category is said to be \textit{balanced} if it is endowed with a braiding and a twist. Let $\C$ be a strict balanced category with monoidal product $ \otimes$, unit $\bm{1}$, braiding $\tau$ and twist $ \theta$. A \textit{trace} \index{trace} for $\C$ is a family of natural set-theoretical maps
\begin{equation}
\mathrm{tr}_{X,Y}^U : \hom{\C}{X \otimes U}{Y \otimes U} \to \hom{\C}{X }{Y}
\end{equation}
satisfying the following axioms:
\begin{enumerate}[leftmargin=4\parindent]
\item[(TC1)] For every $f:X \to Y$,  $$\mathrm{tr}_{X,Y}^{\bm{1}}(f)=f.$$
\item[(TC2)] For every $g: X \otimes U \otimes V  \to  Y \otimes U \otimes V  $,  $$\mathrm{tr}_{X,Y}^{U \otimes V}(g)= \mathrm{tr}_{X,Y}^U (\mathrm{tr}_{X \otimes U,Y \otimes U}^V(g)).$$
\item[(TC3)] For every $f: X_1 \otimes U \to Y_1 \otimes U$  and every $g: X_2  \to Y_2 $,  
\begin{align*}
\mathrm{tr}_{X_1,Y_1}^U(f) \otimes g &= \mathrm{tr}_{X_1 \otimes X_2,Y_1 \otimes Y_2}^U \Big( (\id_{Y_1} \otimes \tau^{-1}_{Y_2,U})(f \otimes g)(\id_{X_1} \otimes\tau_{X_2,U})  \Big) \\
&= \mathrm{tr}_{X_1 \otimes X_2,Y_1 \otimes Y_2}^U \Big( (\id_{Y_1} \otimes \tau_{U,Y_2})(f \otimes g)(\id_{X_1} \otimes\tau^{-1}_{U,X_2})  \Big).
\end{align*}
\item[(TC4)] For every object $U$ in $\C$, 
$$ \mathrm{tr}_{U,U}^U(\tau_{U,U}) = \theta_U \qquad , \qquad  \mathrm{tr}_{U,U}^U(\tau^{-1}_{U,U}) = \theta^{-1}_U.$$
\end{enumerate}
A strict balanced monoidal category endowed with a trace is called a \textit{traced monoidal category}. This notion was introduced first by Joyal, Street and Verity \cite{joyal_street_traced}.

The authors show in  \cite{joyal_street_traced} that every ribbon category $\C$ is canonically traced:  given objects $X,Y,U$ in $\C$ and an arrow $f: X \otimes U \to Y \otimes U$, the trace $\mathrm{tr}_{X,Y}^U(f)$ is defined as the composite
\begin{equation}\label{eq:trace_for_ribbon}
\begin{tikzcd}[column sep=4em]
X \rar{\id_X \otimes \mathrm{coev}_U } & X \otimes U \otimes U^* \rar{f \otimes \id_{U^*}} & Y \otimes U \otimes U^* \rar{\id_Y \otimes \widetilde{\mathrm{ev}}_U} & Y
\end{tikzcd}
\end{equation}
They also prove that the free traced monoidal category generated by one object is precisely  $\T^+$:
\begin{theorem}[\cite{joyal_street_traced}]\label{thm:UP_traced}
Let $\C$ be a  traced monoidal category, and let $X$ be an object of $\C$. Then there exists a unique strict monoidal functor $$F: \T^+ \to \C$$ such that $F(+)=X$ and $F$ preserves the braiding, twist and the trace in the sense that $F(\mathrm{tr}_{+^n , +^n}^{+^m}(f))=\mathrm{tr}_{X^{\otimes n}, X^{\otimes n}}^{X^{\otimes m}}(Ff) $.
\end{theorem}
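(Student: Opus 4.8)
The plan is to deduce the universal property of $\T^+$ from the one already established for $\T$ (\cref{thm:turaev}) by means of the Joyal--Street--Verity ``$\mathrm{Int}$'' construction, which freely completes a traced monoidal category to a ribbon (tortile) one. Recall that to any traced monoidal $\C$ one associates a strict ribbon category $\mathrm{Int}(\C)$, whose objects are formal pairs $(A^+,A^-)$ of objects of $\C$ and whose hom-sets are $\hom{\mathrm{Int}(\C)}{(A^+,A^-)}{(B^+,B^-)}=\hom{\C}{A^+\otimes B^-}{B^+\otimes A^-}$, together with a fully faithful, braiding-, twist- and trace-preserving strict monoidal embedding $\eta_\C:\C\hookrightarrow \mathrm{Int}(\C)$, $A\mapsto(A,\bm{1})$. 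The two facts I will use are that $\eta_\C$ is a \emph{full} traced embedding and that, because the negative slots are trivial, $\hom{\mathrm{Int}(\C)}{\eta_\C(A)}{\eta_\C(B)}=\hom{\C}{A}{B}$.

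For existence, given a traced $\C$ and an object $X$, apply \cref{thm:turaev} to the ribbon category $\mathrm{Int}(\C)$ and the object $\eta_\C(X)$: this yields a unique strict ribbon functor $\widehat F:\T\to\mathrm{Int}(\C)$ with $\widehat F(+)=\eta_\C(X)$. Since ribbon functors preserve the canonical trace \eqref{eq:trace_for_ribbon}, and this trace restricts to the trace on $\T^+\subset\T$, the restriction $\widehat F|_{\T^+}$ is a traced functor. Now $\widehat F$ sends every object $+^n$ to $\eta_\C(X)^{\otimes n}=\eta_\C(X^{\otimes n})$, so each $\widehat F(f)$ with $f\in\hom{\T^+}{+^n}{+^k}$ is a morphism $\eta_\C(X^{\otimes n})\to\eta_\C(X^{\otimes k})$, hence lies in the image of $\eta_\C$; thus there is a unique $F(f)\in\hom{\C}{X^{\otimes n}}{X^{\otimes k}}$ with $\eta_\C(F(f))=\widehat F(f)$. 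This defines a strict monoidal $F:\T^+\to\C$ with $F(+)=X$, and because $\eta_\C$ reflects the trace one checks $F(\mathrm{tr}^{+^m}(g))=\mathrm{tr}^{X^{\otimes m}}(Fg)$; preservation of braiding and twist is immediate from that of $\widehat F$.

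Uniqueness rests on a geometric generation lemma: $\T^+$ is generated, as a traced monoidal category, by the single object $+$ and the crossing $\PC$ (with $\NC=\PC^{-1}$, and the twist recovered as $\theta_+=\mathrm{tr}^{+}_{+,+}(\PC)$ by (TC4)). Granting this, any traced $F'$ with $F'(+)=X$ is forced on generators, since $F'(\PC)=\tau_{X,X}$, and, preserving $\otimes$, $\circ$ and $\mathrm{tr}$, it agrees with $F$ everywhere. I expect this generation lemma to be the main obstacle: one must show that an arbitrary upward tangle, put in generic (Morse) position with respect to the height function, can be rebuilt from crossings by tensoring, composing and tracing, i.e.\ that every matched local maximum/minimum of a $+$-to-$+$ diagram is realised by closing strands through $\mathrm{tr}$ rather than through isolated cups and caps. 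This is essentially an Alexander/Markov-type normal form for tangles; once it is in place, both uniqueness and existence follow formally (existence could alternatively be checked by hand, by verifying that the framed Reidemeister relations among the crossing generators are consequences of (TC1)--(TC4)). The remaining verifications -- that $\eta_\C$ has the stated properties and that the induced $F$ is genuinely traced -- are routine.
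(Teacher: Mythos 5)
The paper does not in fact prove \cref{thm:UP_traced}; it is imported from \cite{joyal_street_traced}, and the argument the paper carries out in detail is for the open-traced analogue, \cref{thm:UP_opentraced}. There, both existence and uniqueness come directly from tangle topology: every relevant tangle is put in the normal form of a partial closure of a braid, two braids have isotopic partial closures if and only if they are related by ($k$-open, framed) Markov moves (established via the Lambropoulou--Rourke $L$-moves), and one checks that these moves are absorbed by the trace axioms (naturality of $\mathrm{tr}$ in $U$ for (MI), axiom (OTC4) for (MII)). Your route is genuinely different on the existence half: by passing to $\mathrm{Int}(\C)$ and invoking \cref{thm:turaev} you get the functor for free and never verify Markov invariance; the price is importing the nontrivial facts that $\mathrm{Int}(\C)$ is a strict tortile category, that $\eta_\C$ is injective on objects, fully faithful and strict monoidal, and that the canonical trace \eqref{eq:trace_for_ribbon} of $\mathrm{Int}(\C)$ restricts along $\eta_\C$ to the given trace of $\C$ --- these are the main theorems of \cite{joyal_street_traced}, so the work is relocated rather than removed, but the reduction itself is sound (fullness of $\eta_\C$ is exactly what lets you pull $\widehat F|_{\T^+}$ back to $\C$, and you also need that the trace of $\T^+$ is the restriction of the ribbon trace of $\T$, which is true). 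For uniqueness you still need the same geometric input as the paper's first claim in the proof of \cref{thm:UP_opentraced}: the Alexander-type statement that every morphism of $\T^+$ (necessarily an endomorphism of some $+^n$) is a partial trace $\mathrm{tr}^{+^m}(b)$ of a word in the crossing generators, with closed components produced by tracing out strands. You correctly flag this as the main remaining obstacle; it is true, and the ``generic rotational diagram'' construction in the paper supplies precisely this normal form, so with that lemma granted your proposal is correct.
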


Just as before, this theorem provides a graphical calculus for traced monoidal categories,  see \cite[\S 2]{joyal_street_traced}.

\subsection{Bundle monoidal categories}\label{subsec:constr_M_n} We now briefly recall a construction that will feature a few times in what follows. 
Let $\mathcal{M}=(M_i)_{i \in \N}$ be a family of monoids indexed by nonnegative integers, and suppose that there is a family of monoid homomorphisms $$\rho_{n,m}: M_n \times M_m \to M_{n+m} \qquad , \qquad  n,m \in \N$$ satisfying
\begin{equation}\label{eq:M_n1}
 \rho_{0,n} \circ (1_{M_0}\times\id_{M_n}  ) = \id_{M_n} = \rho_{n,0}\circ (\id_{M_n} \times 1_{M_0})
\end{equation}
and  
\begin{equation}\label{eq:M_n2}
\rho_{n+m,r} \circ (\rho_{n,m} \times \id_{M_r}) = \rho_{n,m+r}   \circ (\id_{M_n} \times \rho_{m,r})  
\end{equation}
for all $n,m \geq 0$, where $1_{M_0}$ is the unit of $M_0$.  Then it is easy to see that this family of monoids gives rise to a strict monoidal category $\mathcal{M}$ defined as follows: its objects are the nonnegative integers, and given $n,m \in \N$ declare
$$ \hom{\mathcal{M}}{n}{m}:= \begin{cases} M_n, & n=m \\ \emptyset, & n \neq m \end{cases} ,$$
with composite determined by the multiplication law of the monoids $M_n$, $x \circ y := xy$, $x, y \in M_n$, and identity given by the unit  of $M_n$. The  monoidal product on this category is given by addition of integers for objects and the structure maps $\rho_{n,m}$ for the arrows, with unit object given by $0 \in \N$. If the monoids $(M_n)$ are in fact groups, then the resulting category $\mathcal{M}$ is a groupoid.

\begin{example}
Let $\SS_n$ be the symmetric group of $n$ elements, and for every $n,m \in \N$, let $\rho_{n,m}: \SS_n \times \SS_m \to \SS_{n+m}$ denote the block product of permutations, which satisfies the conditions above. The strict monoidal category resulting from applying the previous construction is called the \textit{permutation category}  and will be denoted by $\SS$. Furthermore, we can actually endow this category with a balanced structure: indeed a symmetric braiding $\tau_{n,m}: n+m \to n+m$ is given by the permutation that maps $i$ to $i+m \pmod {n+m}$, and a twist is given by the identity natural transformation. In fact, $\SS$ is the free symmetric monoidal category generated by one object.
\end{example}

\begin{remark}\label{rmk:functor_M->C}
It directly follows from the definitions that if $\C$ is a strict monoidal category, then the data of a strict monoidal functor $\mathcal{M} \to \C$ is fully determined by a choice of an object $X$ in $\C$ and a family of monoid homomorphisms $$ \varphi_n: M_n \to \mathrm{End}_\C(X^{\otimes n}) \qquad , \qquad n \geq 0   $$ such that the following diagram commutes for all $n,m \geq 0$:
\begin{equation}
\begin{tikzcd}
M_n \times M_n \dar{\varphi_n \times \varphi_m}  \rar{\rho_{n,m}} & M_{n+m} \dar{\varphi_{n+m}}\\
\mathrm{End}_\C(X^{\otimes n}) \times \mathrm{End}_\C(X^{\otimes n}) \rar{\otimes } & \mathrm{End}_\C(X^{\otimes n +m})
\end{tikzcd}
\end{equation}
\end{remark}

\subsection{The categories \texorpdfstring{$\B$}{B} and \texorpdfstring{$\B^0$}{B0}}

Now we want to introduce two more remarkable subcategories of $\T$. We write $\B$ for the subcategory of $\T$ on the objects $\mathrm{Mon}(+) \subset \mathrm{Mon}(+,-)$ and arrows open tangles $T$ with the property that for some representative $\gamma$ of the isotopy class $T$, we have that the intersection of the union of the cores of $\gamma$ with the hyperplanes $D^1 \times D^1 \times \{ z \} \subset (D^1)^{\times 3}$ is constant for all $z \in D^1$.   We call $\B$ the category of \textit{framed braids} in a cube.

Similarly, the category $\B^0$ is the wide subcategory of $\B$ on the arrows 0-framed tangles, that is, tangles whose all components have framing zero. We call $\B$ the category of \textit{braids} in a cube. It is readily seen that both are monoidal subcategories of $\T$.

These two categories admit a description in terms of the construction from \cref{subsec:constr_M_n}. Let us write $B_n$ for the braid group in $n$ strands (also known as the fundamental group $\pi_1 (\mathrm{UConf}_n(\mathbb{C}))$ of the $n$-th unordered configuration space of $\mathbb{C}$, or alternatively the mapping class group $MCG(D^2 - \{n \text{ points}\})$ of the $n$-punctured disc). There is a surjective group homomorphism $\pi: B_n \to \SS_n$ which maps each of the Artin generators $\sigma_i$ to the transposition $s_i:=(i,i+1)$. We call the \textit{framed braid group} in $n$ strands to the semidirect product $$B_n^{fr} := B_n \ltimes \Z^n  $$ where $B_n$ acts on $\Z^n$ permuting the components via $\pi$, that is $\sigma (k_1, \ldots , k_n) := (k_{\pi\sigma(1)}, \ldots , k_{\pi\sigma(n)})$. There are families of group homomorphisms $$ \rho_{n,m}: B_n \times B_m \to B_{n+m} \qquad , \qquad  \rho_{n,m}^{fr}:B_n^{fr} \times B_m^{fr} \to B_{n+m}^{fr} $$ determined by $\rho_{n,m}(\sigma_i, \sigma_j)=\sigma_i\sigma_{n+j}$ and similarly for  $\rho_{n,m}^{fr}$. It is easy to check that they satisfy \eqref{eq:M_n1} and \eqref{eq:M_n2}, and the monoidal categories  resulting from applying the construction from \cref{subsec:constr_M_n} are (isomorphic to) $\B^0$ and $\B$, respectively.

The categories $\B^0$ and $\B$ also enjoy some universal properties:  $\B^0$ is the free braided monoidal category generated by one object, and  $\B$ is the free balanced category generated by one object:

\begin{theorem}
Let $\C$ be a strict braided (resp. balanced) monoidal category, and let $X$ be an object of $\C$. There there exists a unique strict monoidal functor $$F: \B^0 \to \C \qquad \qquad \text{(resp.  } F: \B \to \C \text{)}$$
such that $F(+)=X$ and $F$ preserves the braiding (resp. the braiding and the twist). 
\end{theorem}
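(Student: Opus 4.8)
The plan is to exploit the fact, established in \cref{subsec:constr_M_n}, that $\B^0$ and $\B$ arise from the bundle monoidal construction applied to the families $(B_n)_n$ and $(B_n^{fr})_n$ together with their structure maps $\rho_{n,m}$ and $\rho_{n,m}^{fr}$. By \cref{rmk:functor_M->C}, giving a strict monoidal functor $F \colon \B^0 \to \C$ (resp. $F \colon \B \to \C$) with $F(+)=X$ amounts precisely to specifying a family of monoid homomorphisms $\varphi_n \colon B_n \to \eend{\C}{X^{\otimes n}}$ (resp. $\varphi_n \colon B_n^{fr} \to \eend{\C}{X^{\otimes n}}$) that is compatible with the structure maps and the monoidal product $\otimes$ in the sense of the commuting square of that remark. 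So both existence and uniqueness of $F$ reduce to existence and uniqueness of such a compatible family.

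For existence I would define $\varphi_n$ on the Artin generators of $B_n$ by
$$\varphi_n(\sigma_i) := \id_{X^{\otimes (i-1)}} \otimes \tau_{X,X} \otimes \id_{X^{\otimes (n-i-1)}},$$
and, in the balanced case, additionally send the $i$-th generator of the $\Z^n$ factor of $B_n^{fr}$ to $\id_{X^{\otimes (i-1)}} \otimes \theta_X \otimes \id_{X^{\otimes (n-i)}}$. The crux is to verify that these assignments respect all defining relations. The braid relation $\sigma_i \sigma_{i+1}\sigma_i = \sigma_{i+1}\sigma_i \sigma_{i+1}$ translates into the Yang--Baxter equation for $\tau_{X,X}$, which is a formal consequence of the defining axioms of a braiding together with its naturality; the far-commutativity relations $\sigma_i \sigma_j = \sigma_j \sigma_i$ for $|i-j|\geq 2$ follow from the interchange law in $\C$. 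In the balanced case one further checks that twists in distinct positions commute (again interchange) and that the semidirect-product relations are respected: conjugating a twist by a braiding slides it onto the neighbouring strand, which is exactly naturality of $\tau$ applied to the morphism $\theta_X \colon X \to X$, matching the $B_n$-action on $\Z^n$ through $\pi$. This relation-checking is where the braiding and twist axioms of $\C$ are genuinely used, and I expect it to be the main obstacle.

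Finally I would confirm the compatibility square of \cref{rmk:functor_M->C}: since $\rho_{n,m}(\sigma_i,\sigma_j)=\sigma_i \sigma_{n+j}$ (and similarly for $\rho_{n,m}^{fr}$), unwinding the definitions yields $\varphi_{n+m}\circ \rho_{n,m} = \otimes \circ (\varphi_n \times \varphi_m)$ directly, so the family assembles into the desired functor $F$. By construction $F$ preserves the braiding, and in the balanced case it preserves the twist on all of $\B$, since $\theta_{+^n}$ is determined from $\theta_+$ and the braiding through the balancing compatibility $\theta_{X\otimes Y}=(\theta_Y \otimes \theta_X)\tau_{Y,X}\tau_{X,Y}$, which $F$ respects being monoidal and braided. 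Uniqueness is then immediate: any structure-preserving functor must send $\tau_{+,+}$ to $\tau_{X,X}$ and $\theta_+$ to $\theta_X$, and since every endomorphism of $+^n$ in $\B^0$ (resp.\ $\B$) is a monoidal composite of these generators --- this being precisely the statement that $B_n$ is generated by the $\sigma_i$ and $B_n^{fr}$ additionally by the framing generators --- the value of $F$ is forced everywhere.
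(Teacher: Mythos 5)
Your proposal is correct, but note that the paper itself offers no proof of this theorem: it is stated as a known freeness result (essentially due to Joyal and Street, parallel to the cited universal properties of $\T$ and $\T^+$), so there is no in-paper argument to compare against. Your route --- reducing via the bundle construction of \cref{subsec:constr_M_n} and \cref{rmk:functor_M->C} to a compatible family of monoid homomorphisms $\varphi_n$ on the presentations of $B_n$ and $B_n^{fr}=B_n\ltimes\Z^n$ --- is the standard one, and you correctly identify every verification that carries content: the braid relation becomes the Yang--Baxter identity for $\tau_{X,X}$ (a consequence of the hexagon axioms and naturality), far-commutativity is the interchange law, the semidirect-product relations $\sigma_i t_j\sigma_i^{-1}=t_{s_i(j)}$ follow from naturality of $\tau$ applied to $\theta_X\colon X\to X$ (and interchange when $j\neq i,i+1$), and preservation of the twist on all of $\B$ follows from the balancing identity $\theta_{X\otimes Y}=(\theta_Y\otimes\theta_X)\tau_{Y,X}\tau_{X,Y}$ once $\theta_+$ is preserved. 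The uniqueness argument via generation of $B_n$ by the $\sigma_i^{\pm1}$ (and of $B_n^{fr}$ by the additional framing generators) is likewise sound.
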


In summary, we have the following

\begin{corollary}\label{cor:chain_embeddings}
We have the following chain of strict monoidal embeddings,
$$\mathcal{B}^0 \hooklongrightarrow \mathcal{B} \hooklongrightarrow \T^+ \hooklongrightarrow \T,$$
where $\mathcal{B}^0$, $\mathcal{B}$, $\Tup$, $\T^+$ and $\T$ are the free braided, free balanced, free traced and  free ribbon monoidal categories generated by a single object, respectively.
\end{corollary}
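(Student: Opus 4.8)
The plan is to read this corollary as a synthesis of the universal properties established throughout the section, so that its proof splits into two essentially independent tasks: first, checking that the displayed arrows are genuine strict monoidal embeddings, and second, invoking the correct universal-property theorem for each category. The second task requires no new work beyond collecting references: that $\T$ is the free strict ribbon category on one object is \cref{thm:turaev}; that $\T^+$ is the free traced monoidal category on one object is \cref{thm:UP_traced}; that $\Tup$ is the free strict open-traced monoidal category on one object is \cref{thm:UP_opentraced}; and that $\mathcal{B}^0$ (resp.\ $\mathcal{B}$) is the free braided (resp.\ balanced) monoidal category on one object is the theorem stated immediately above.

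For the first task I would observe that every arrow in the chain is the inclusion of a monoidal subcategory of $\T$, exactly as described earlier in this section: $\mathcal{B}^0 \hooklongrightarrow \mathcal{B}$ is the inclusion of the wide subcategory of $0$-framed braids into all framed braids; $\mathcal{B} \hooklongrightarrow \Tup$ and $\Tup \hooklongrightarrow \T^+$ are inclusions of monoidal subcategories on the common object set $\mathrm{Mon}(+)$; and $\T^+ \hooklongrightarrow \T$ is the inclusion of the full subcategory on $\mathrm{Mon}(+) \subset \mathrm{Mon}(+,-)$. In each case the monoidal product is concatenation of words and is preserved on the nose, the functor is faithful because it is a subcategory inclusion, and it is injective on objects because $\mathrm{Mon}(+) \hooklongrightarrow \mathrm{Mon}(+,-)$ is injective; hence each arrow is a strict monoidal embedding.

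The only genuine, if mild, verification, and the step I expect to be the main obstacle, is to confirm the compatibility of the structural data along the chain, so that these embeddings are identified with the canonical comparison functors issuing from the universal properties, rather than being merely abstract embeddings. Concretely I would check that the braiding $\tau_{+,+} = \PC$, the twist $\theta_+$, and the (open) trace obtained by closing up components are defined by the same local tangle pictures and closure operations in the smaller category as in the larger one; in particular the trace on $\T^+$ is the one induced from the ribbon structure of $\T$ via \eqref{eq:trace_for_ribbon}, so that each inclusion preserves whatever structure both ends of the arrow carry. Granting this, the uniqueness clauses in the universal properties force the canonical functor (for instance the one obtained by applying the universal property of $\mathcal{B}^0$ to the braided category $\mathcal{B}$ with distinguished object $+$) to send $+$ to $+$ and to preserve the braiding, hence to agree with the subcategory inclusion on generators and therefore everywhere. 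This pins down the displayed maps as the canonical embeddings and completes the proof.
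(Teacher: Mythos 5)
Your proposal is correct and matches the paper's treatment: the paper offers no explicit proof, presenting the corollary as a summary ("In summary, we have the following") of the subcategory inclusions and universal-property theorems established earlier in the section, which is exactly what you assemble. Your additional verification that the inclusions are strict monoidal embeddings compatible with the braiding, twist and trace is implicit in the paper and is a reasonable thing to spell out.
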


\section{Upwards tangles}\label{sec:upwards}

So far we have introduced four different categories of braids and tangles, some being more restrictive and some being more general in the classes of isotopy classes allowed. The universal balanced category $\B$ has the advantage that one does not have to deal with closed components, which sometimes might pose problems in certain constructions, but it is rather restrictive as it only allows framed braids. On the other hand, the universal traced category $\T^+$ has the advantage that it allows very general classes of tangles (it is a full subcategory of $\T$) without requiring duality, but it has as downside  that one still has to deal with closed components. In this section we will introduce a class of tangles that retains the best features of both $\B$ and $\T^+$.

\subsection{The category \texorpdfstring{$\Tup$}{Tup} }

Here is the relevant definition: the category of \textit{upwards tangles} $\Tup$ is the subcategory of $\T$ on the objects $\mathrm{Mon}(+) \subset \mathrm{Mon}(+,-)$ and arrows open tangles. It is clear that $\Tup$ is a monoidal subcategory of $\T$ that sits in between $\B$ and $\T^+$: there are monoidal embeddings $$\mathcal{B} \hooklongrightarrow \Tup \hooklongrightarrow \T^+ ,$$ in particular $\Tup$ is balanced. Besides, this category is sufficiently general to encode knot theory: the canonical closure map
\begin{equation*}
\centre{
\labellist \small \hair 2pt
\pinlabel{$K$}  at 58 165
\pinlabel{$K$}  at 423 165
\pinlabel{$\mapsto$}  at 240 165
\endlabellist
\centering
\includegraphics[width=0.25\textwidth]{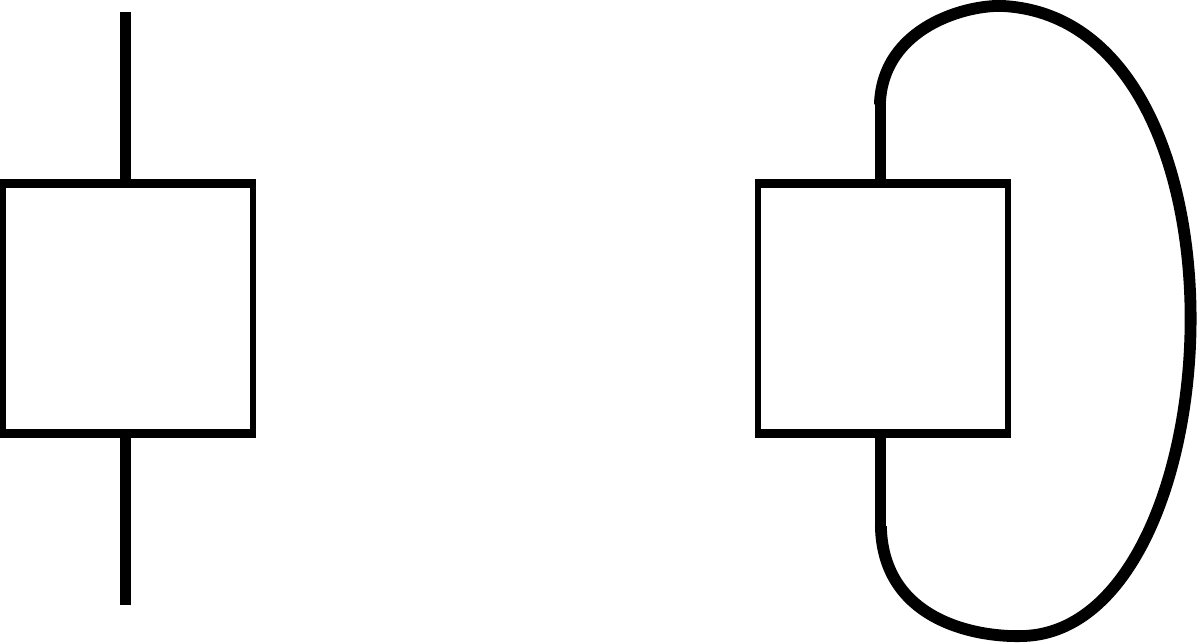}}
\end{equation*}
establishes a bijection
\begin{equation}\label{eq:bijection_open_closed_knots}
\begin{tikzcd}[column sep=3em]
\frac{ \left\{ \parbox[c]{9em}{\centering     {\small  \textnormal{one-component upwards tangles in $(D^1)^{\times 3}$ }}} \right\} }{ \parbox[c]{9em}{\centering \vspace*{2pt} {\small  \textnormal{isotopy rel. endpoints} }}} \ar[-,double line with arrow={-,-}]{r} &    \frac{ \left\{ \parbox[c]{6em}{\centering     {\small  \textnormal{framed knots in $(D^1)^{\times 3}$ }}} \right\} }{ \parbox[c]{8em}{\centering \vspace*{2pt} {\small  \textnormal{isotopy} }}}
\end{tikzcd}
\end{equation}

The category $\Tup$ can also be seen as arising from the bundle construction from \cref{subsec:constr_M_n}. For every $n \geq 0$, the endomorphism monoid $\mathrm{End}_{\Tup}(+^n)$ will be called the monoid of $n$-components upwards tangles. By the monoidal category axioms, \eqref{eq:M_n1} and \eqref{eq:M_n2} hold, and the resulting category is (trivially) $\Tup$. Note that by the same reasoning, we could view the category $\T^+$ also arising from the same construction.

For every $n$-component upwards tangle, let  $\sigma_T \in \SS_n$ be the permutation given by sending $i$ to the position in $t(T)$ (reading from left to right) where the component which starts at the $i$-th position in $s(T)$ finishes at. This assignment promotes to a monoid homomorphism
\begin{equation}\label{eq:pi_Tup->S}
\pi: \mathrm{End}_{\Tup}(+^n)  \to \SS_n
\end{equation}
whose kernel (that is, those upwards tangles whose components start and end at the same position) consists of the so-called \textit{$n$-string links} \cite{BN_vassiliev,meilhan,MWY}. Here we prefer keep the word \textit{link} for closed components, so we call those elements in the kernel \textit{pure} upwards tangles as in \cite{BBK}, in analogy to the pure braid group $PB_n = \ker (B_n \to \SS_n)$.

The strands of an upwards tangle are canonically ordered, labelling them at the tails from left to right. We will always assume this order without further mention.

We record here two important properties:

\begin{proposition}[\cite{schubert, krebes,BBK}]
The upwards tangle monoids satisfy the following properties:
\begin{enumerate}
\item The invertible upwards tangles are exactly the framed braids, $$(\mathrm{End}_{\Tup}(+^n))^\times = B_n^{fr}.$$
\item The monoids $\mathrm{End}_{\Tup}(+^n)$, $n \geq 1$, are infinitely-generated.
\end{enumerate}
\end{proposition}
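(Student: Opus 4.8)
\emph{Plan.} I would treat the two statements separately, in both cases reducing to classical complexity invariants of knots and tangles.

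For (1), the inclusion $B_n^{fr} \subseteq (\mathrm{End}_{\Tup}(+^n))^\times$ is immediate: the embedding $\mathcal{B} \hooklongrightarrow \Tup$ identifies framed braids with the endomorphisms built from the groups $B_n^{fr}$ in \cref{subsec:constr_M_n}, and since these are groups (so $\mathcal{B}$ is a groupoid), every framed braid is invertible. For the reverse inclusion the strategy is to produce a \emph{complexity} $c\colon \mathrm{End}_{\Tup}(+^n) \to \N$ that vanishes exactly on braids and is superadditive under composition. Concretely, I would put a representative of an upwards tangle $T$ in Morse position with respect to the height function and let $c(T)$ be the minimal number of local maxima (equivalently of local minima, since each component runs from the bottom to the top face and these critical points alternate) over all such representatives. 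By construction $c(T)=0$ if and only if $T$ has a representative with monotone strands, i.e. if and only if $T$ is a framed braid, the framing being recorded by the blackboard convention.

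The heart of the matter is the \emph{superadditivity}
\begin{equation*}
c(T_2 \circ T_1) \;\ge\; c(T_1) + c(T_2),
\end{equation*}
which is the tangle analogue of Schubert's additivity of the bridge number under connected sum; for $n=1$ it is essentially $\beta(K_1 \# K_2) = \beta(K_1) + \beta(K_2) - 1$ applied to long knots. Granting this, if $T$ is a unit with one-sided inverse $S$ then $0 = c(\id_{+^n}) = c(S \circ T) \ge c(T) + c(S) \ge 0$, forcing $c(T)=0$ and hence $T \in B_n^{fr}$. I expect the superadditivity to be the main obstacle: the opposite inequality $c(T_2 \circ T_1)\le c(T_1)+c(T_2)$ follows trivially by stacking minimal representatives, whereas the lower bound requires ruling out the cancellation of maxima across the gluing plane, which is precisely the Schubert-type incompressibility input supplied by \cite{schubert, krebes, BBK}.

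For (2), I would exhibit a surjective monoid homomorphism onto an infinitely generated monoid. Let $\mathcal{K}$ denote the commutative monoid of isotopy classes of oriented knots under connected sum, which by Schubert's unique factorisation theorem is the free commutative monoid on the infinite set of prime knots, hence is not finitely generated. Define $\kappa\colon \mathrm{End}_{\Tup}(+^n) \to \mathcal{K}$ by sending $T$ to the connected sum, over all components of $T$, of the local knot type of each component (the knotting of the properly embedded arc, closed up by an unknotted arc in a ball). Since the local knot type of an arc is intrinsic and connected sum is additive along concatenations, a component of $T_2 \circ T_1$ has local knot type the connected sum of those of its sub-arcs lying in $T_1$ and in $T_2$; summing over all components gives $\kappa(T_2 \circ T_1) = \kappa(T_1) \,\#\, \kappa(T_2)$, so $\kappa$ is a homomorphism, and it is surjective because any knot can be tied as a local knot into the first strand of the trivial $n$-braid. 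Consequently a finite generating set of $\mathrm{End}_{\Tup}(+^n)$ would map to a finite generating set of $\mathcal{K}$, contradicting the infinitude of prime knots. The only delicate point is the well-definedness and composition-additivity of $\kappa$, which again rests on the locality of knotting guaranteed by Schubert's results.
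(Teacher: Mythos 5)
The paper does not actually prove this proposition; it is imported from the literature with the citations \cite{schubert,krebes,BBK}, so there is no internal argument to measure yours against, and I can only assess the proposal on its own terms. Your treatment of (2) is essentially complete and correct. The map $\kappa$ is well defined (deleting the other components and closing by a boundary-parallel arc is isotopy-invariant), and it is a homomorphism because the interface disc between the two stacked cubes, capped off outside the cubes, is a $2$-sphere meeting each closed-up component in exactly two points --- once on the strand at the gluing level and once on the closure arc --- so it exhibits the closure of a concatenated strand as the connected sum of the closures of its two halves. Surjectivity together with Schubert's prime decomposition (in fact the mere existence of infinitely many prime knots suffices) then kills finite generation. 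This is exactly the kind of argument the cited sources supply.

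For (1), the inclusion $B_n^{fr}\subseteq(\mathrm{End}_{\Tup}(+^n))^\times$ is fine, but the reverse inclusion rests entirely on the superadditivity of your Morse complexity $c$, which you assert and do not prove; that inequality is the whole content of the statement. Be aware that for $n\ge 2$ it is not a formal consequence of Schubert's bridge-number theorem: composition of upwards tangles glues along an $n$-punctured disc, not along a sphere meeting the union of strands in two points, so the incompressibility argument that forbids maxima from cancelling across the gluing locus must be carried out afresh in this setting --- this is precisely the nontrivial input one has to extract from \cite{schubert,krebes,BBK}, and it is not obvious that the inequality holds in the exact form you state it. Note also that you only need the weaker implication ``if $S\circ T$ is a framed braid then so are $S$ and $T$'', which is what the invertibility argument actually uses and which may be more accessible than full superadditivity. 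As written, part (1) is a correct plan whose crucial lemma is outsourced --- which, to be fair, is no less than what the paper itself does.
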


As an immediate consequence, we find that $\Tup$ is infinitely generated as a monoidal category, and  isomorphisms in $\Tup$ are exactly framed braids.

\subsection{Rotational diagrams}

We will mostly focus on a convenient class of diagrams of upwards tangles. A diagram $D$ of an upwards tangle is said to be  \textit{rotational}\index{rotational diagram} if all crossings of $D$ point upwards and all maxima and minima appear in pairs of the following two forms,
\begin{equation}\label{eq:full_rotations}
\centre{
\centering
\includegraphics[width=0.27\textwidth]{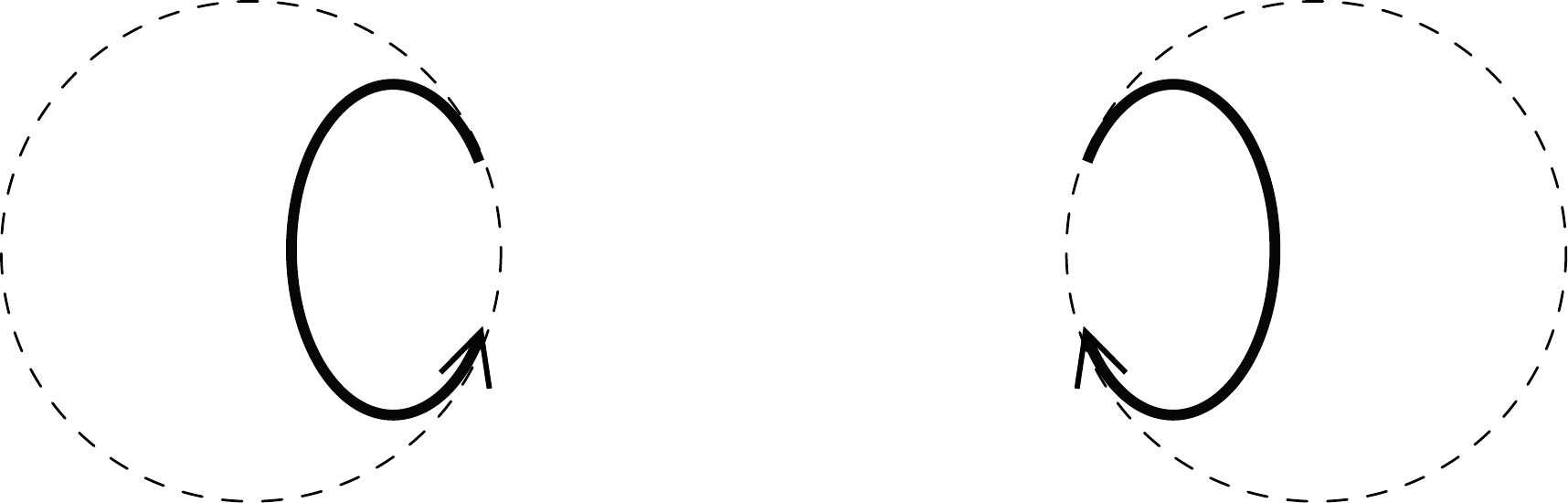}}
\end{equation}
where the dashed discs denote some neighbourhoods of that piece of strand in the tangle diagram. The idea of considering digrams where only full rotations are allowed first appeared in \cite{barnatanveenpolytime,barnatanveengaussians} and was further developed by the author in \cite{becerra_gaussians}. 
We regard rotational tangle diagrams up to \textit{Morse isotopy}\index{Morse isotopy}, that is, planar isotopy that preserve all maxima and minima. In other words, we do not allow isolated cups and caps (``half rotations''), instead they must appear in pairs, either  $\capl$ and  $\cupr$ or  $\capr$ and  $\cupl$,  forming full rotations, as depicted in \eqref{eq:full_rotations}.

\begin{lemma}[\cite{becerra_gaussians}]\label{lemma:every_tangle_has_rot_diag}
Any upwards tangle has a rotational diagram.
\end{lemma}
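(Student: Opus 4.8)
The plan is to produce a rotational diagram in two stages: first arrange that every crossing is upward, and then reorganise the maxima and minima into full rotations. For the first stage I would invoke the generation of $\T$. By \cref{thm:T_presentation} the tangle $T \in \Tup \subset \T$ can be written as an iterated composite and monoidal product of the generators $\PC$, $\NC$, $\cupr$, $\cupl$, $\capr$, $\capl$. Drawing such a word layer by layer, with one generator per height slice and vertical strands elsewhere, yields a diagram $D_0$ of $T$ in generic (Morse) position in which every crossing is one of the upward crossings $\PC$, $\NC$, and every local extremum of the height function is one of the four cups or caps. Thus the first condition in the definition of a rotational diagram already holds, and it remains only to arrange the extrema into full-rotation pairs.

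Next I would establish the combinatorial pairing of extrema. Fix a component of $T$; since $T$ has no closed components this component is an arc running from a bottom endpoint to a top endpoint, entering and leaving the cube pointing upward. Following it in the direction of its orientation, the extrema of the height function along it alternate between maxima (cap shapes) and minima (cup shapes); because the arc both departs and arrives pointing upward, this alternating sequence begins with a maximum and ends with a minimum, so the two kinds occur in equal number. Reading in orientation order, the extrema therefore group canonically into consecutive pairs $(\text{max},\text{min})$, in which the maximum sits above the minimum and is joined to it by a single monotone descending subarc containing no further extremum of that same component. Each such pair is, up to Morse isotopy, a candidate full rotation, the handedness (the two forms of \eqref{eq:full_rotations}) recording on which side the descending subarc lies.

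The remaining work is to isotope $D_0$ so that, for each component, every descending subarc joining a paired maximum to its minimum is shrunk into a small disc, turning the pair into one of the two full rotations while leaving all crossings upward. I would treat one pair at a time, shrinking the descending subarc (equivalently, pulling the minimum up toward its maximum): whenever the subarc meets a strand of another component, or an upward crossing, I push that feature aside using the framed Reidemeister moves, and whenever it would meet another extremum I appeal to the monotonicity just established to separate the two at distinct heights. Since there are no closed components and the endpoints are fixed, all of these are isotopies rel.\ endpoints, so the result still represents $T$; once every descending subarc has been localised, each maximum and minimum belongs to a full rotation and the diagram is rotational.

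The main obstacle is this final localisation step: one must verify that shrinking a descending subarc past the crossings and transverse strands in its way is always realisable by framed Reidemeister moves, without ever reversing the upward orientation of a crossing or inadvertently closing off a component. I expect this to follow from an induction on the number of crossings and extrema obstructing a given subarc, using Reidemeister~II to slide the subarc across transverse strands and planar (Morse) isotopies to reorder extrema by height; the absence of closed components is exactly what guarantees that the procedure terminates with every extremum paired into a full rotation.
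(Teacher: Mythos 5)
Your first two stages are sound: decomposing $T$ into the generators of \cref{thm:T_presentation} does yield a diagram $D_0$ in which every crossing points upwards and every extremum is a cup or a cap, and the alternation argument correctly pairs the extrema of each component into consecutive (cap, cup) pairs joined by monotone descending subarcs. Note, though, that this is already a different route from the paper's, which does not isotope a given diagram into shape but rebuilds one from scratch: it totally orders the crossings of an arbitrary diagram of $T$, stacks them in upward position in horizontal bands, and reconnects the edges, so that the cups and caps it introduces come in full-rotation pairs by construction and the output is planar isotopic to the input.

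The genuine gap is your third stage, which is where all the difficulty of the lemma lives and which you yourself flag as unproven (``I expect this to follow from an induction\dots''). There are two problems. First, the tool you propose is the wrong one: in $D_0$ every crossing involves two upward-pointing strands, so a descending subarc $\gamma$ joining a paired maximum to its minimum participates in no crossing at all and meets no transverse strand --- there is nothing for Reidemeister~II to slide it across. Conversely, if you did perform a Reidemeister~II move between $\gamma$ and another strand, you would create two crossings in which one strand points downwards, destroying exactly the property you secured in stage one. Second, the real obstruction to localisation is planar-topological rather than combinatorial: $\gamma$ is an embedded arc inside a single complementary face of the rest of the diagram, and contracting it into a small disc of one of the two forms of \eqref{eq:full_rotations} requires an ambient isotopy of the square that drags the two ascending arcs attached at its endpoints without disturbing the Morse structure (upward crossings, already-localised rotations) of whatever it sweeps past. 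That this is always possible is plausible --- the absence of closed components forces every complementary face to be a disc --- but it is precisely the content of the lemma, and neither the induction you gesture at nor the moves you name establish it. To complete your argument you would need to prove this contraction statement directly; the paper's explicit reconstruction algorithm is designed to avoid the issue altogether.
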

\begin{proof}
Given a tangle diagram $D$ of an upwards tangle, we will construct another diagram in rotational form which is related to the former only by planar isotopy.

For each tangle component, label the edges of the strand according to the orientation. Let the pair $(i,j)$ denote the $j$-th edge of the strand $D_i$. By an edge of a tangle diagram we mean an edge of the underlying uni-tetravalent graph of $D$. Write $X_{(i,j), (i',j')}^\pm$ for the crossing that has the edge $(i,j)$ as the foot of the overstrand and $(i',j')$ as the foot of the understrand, where $ \pm$ indicates whether the crossing is positive or negative. We define a total order in the set of labels $(i,j)$ as follows: $$(i,j)<(i',j') \text{ if } i<i' \text{ or } i=i'  \text{ and } j<j'.$$
This induces the following total order in the set $\mathrm{cross}(D)$ of crossings of the digram:
$$X_{(i_1,j_1), (i'_1,j'_1)}^\pm   < X_{(i_2,j_2), (i'_2,j'_2)}^\pm  \text{ if }  \min ((i_1,j_1), (i'_1,j'_1))< \min ((i_2,j_2), (i'_2,j'_2)).$$

Now, let us construct a new diagram for the given upwards tangle. First, start by placing the feet of all components of $D$, from left to right following the order of the components. Now,  according to the order in $\mathrm{cross}(D)$,  place the crossings in the bands $\R  \times [k,k+1]$ of the plane  in an upward fashion, placing a cup at the end of the foot of the edge with the greatest pair. We illustrate this below with $(i,j) < (i',j')$:

\vspace{0.2cm}
\begin{equation*} 
\labellist \tiny \hair 2pt
\pinlabel{$(i,j)$}  at 44 -35
\pinlabel{$(i',j'+1)$}  at -15 195
\pinlabel{$(i,j+1)$}  at 230 195
\pinlabel{$(i',j')$}  at 345 65
\pinlabel{$(i,j)$}  at 655 -35
\pinlabel{$(i',j'+1)$}  at 620 195
\pinlabel{$(i,j+1)$} [l] at 749 195
\pinlabel{$(i',j')$}  at 960 65
\pinlabel{$(i,j)$}  at 1536 -35
\pinlabel{$(i',j'+1)$}  at 1630 195
\pinlabel{$(i,j+1)$} [r] at 1500 195
\pinlabel{$(i',j')$}  at 1230 65
\pinlabel{$(i,j)$}  at 2150 -35
\pinlabel{$(i',j'+1)$}  at 2250 195
\pinlabel{$(i,j+1)$} [r] at 2120 195
\pinlabel{$(i',j')$}  at 1850 65
\pinlabel{\normalsize{$X_{(i,j), (i',j')}^+$}} at 200 -140
\pinlabel{\normalsize{$X_{(i',j'),(i,j) }^-$}} at 790 -140
\pinlabel{\normalsize{$X_{(i',j'),(i,j) }^+$}} at 1450 -140
\pinlabel{\normalsize{$X_{(i,j), (i',j')}^-$}} at 2080 -140
\endlabellist
\centering
\includegraphics[width=0.87\textwidth]{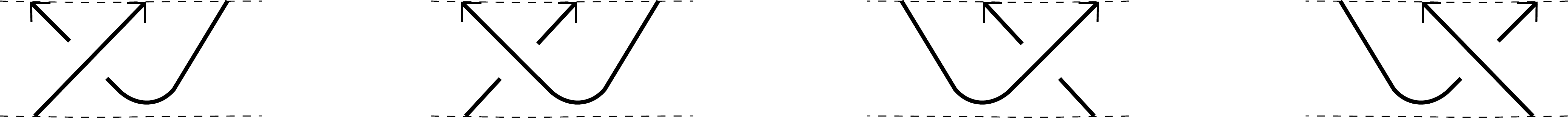}
\end{equation*}
\vspace*{15pt}

\noindent
For each $i$, we connect the edges $(i,j)$ according to the order and extend all edges up if they have not been connected yet.  In doing so, we must place some caps when we have to merge with an edge that is already on the diagram. The resulting diagram has cups and caps appearing in pairs as in  \eqref{eq:full_rotations}, but also isolated maxima and zig-zag curves, as shown below to left and right respectively:
\begin{equation*}
\centre{
\centering
\includegraphics[width=0.45\textwidth]{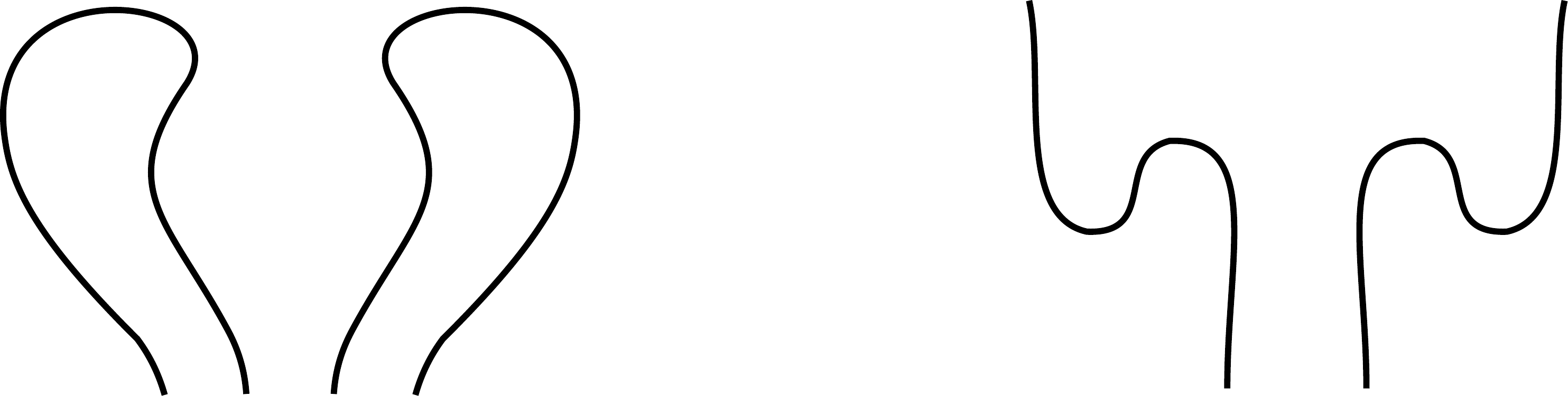}}
\end{equation*}
However these two can be removed by a planar isotopy. By construction, the resulting diagram is planar isotopic to the original one.
\end{proof}

\begin{example}
Let us illustrate the proof of the  previous lemma. Suppose we start with the 2-component tangle showed below:
\begin{equation*}
\labellist \tiny \hair 2pt
\pinlabel{$1$}  at 45 50
\pinlabel{$2$}  at 195 321
\pinlabel{$2'$}  at 394 321
\pinlabel{$1'$}  at 538 50
\pinlabel{$3'$}  at 195 58
\pinlabel{$3$}  at 400 58
\pinlabel{$4'$}  at 55 313
\pinlabel{$4$}  at 545 313
\endlabellist
\centering
\includegraphics[width=0.2\textwidth]{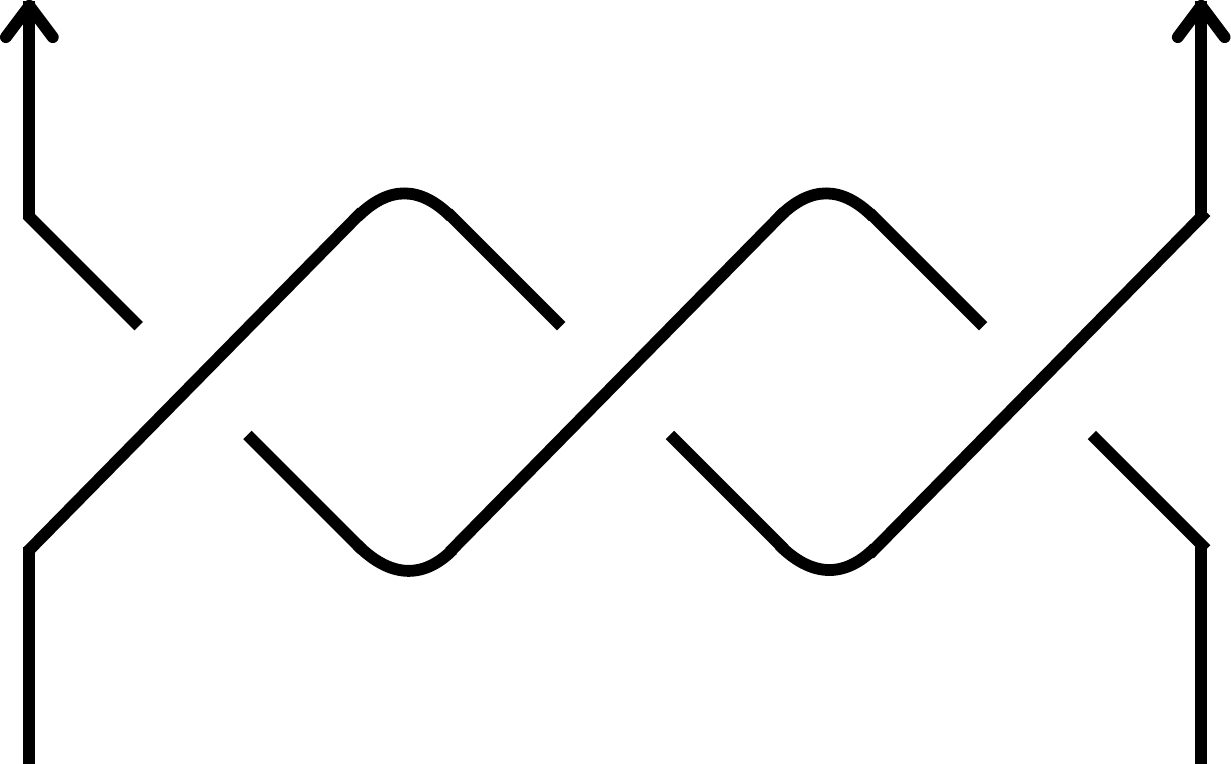}
\end{equation*}
For simplicity we  have labelled the edges  as $j=(1,j)$ and $j'=(2,j)$. The algorithm described produces the following tangle diagram:
\begin{equation*}
\labellist \tiny \hair 2pt
\pinlabel{$1$}  at  228 52
\pinlabel{$2$}  at 406 214
\pinlabel{$3$}  at 236 400
\pinlabel{$4$}  at 355 550
\pinlabel{$1'$}  at 718 64
\pinlabel{$2'$}  at 200 600
\pinlabel{$3'$}  at 555 290
\pinlabel{$4'$}  at 41 432
\endlabellist
\centering
\includegraphics[width=0.26\textwidth]{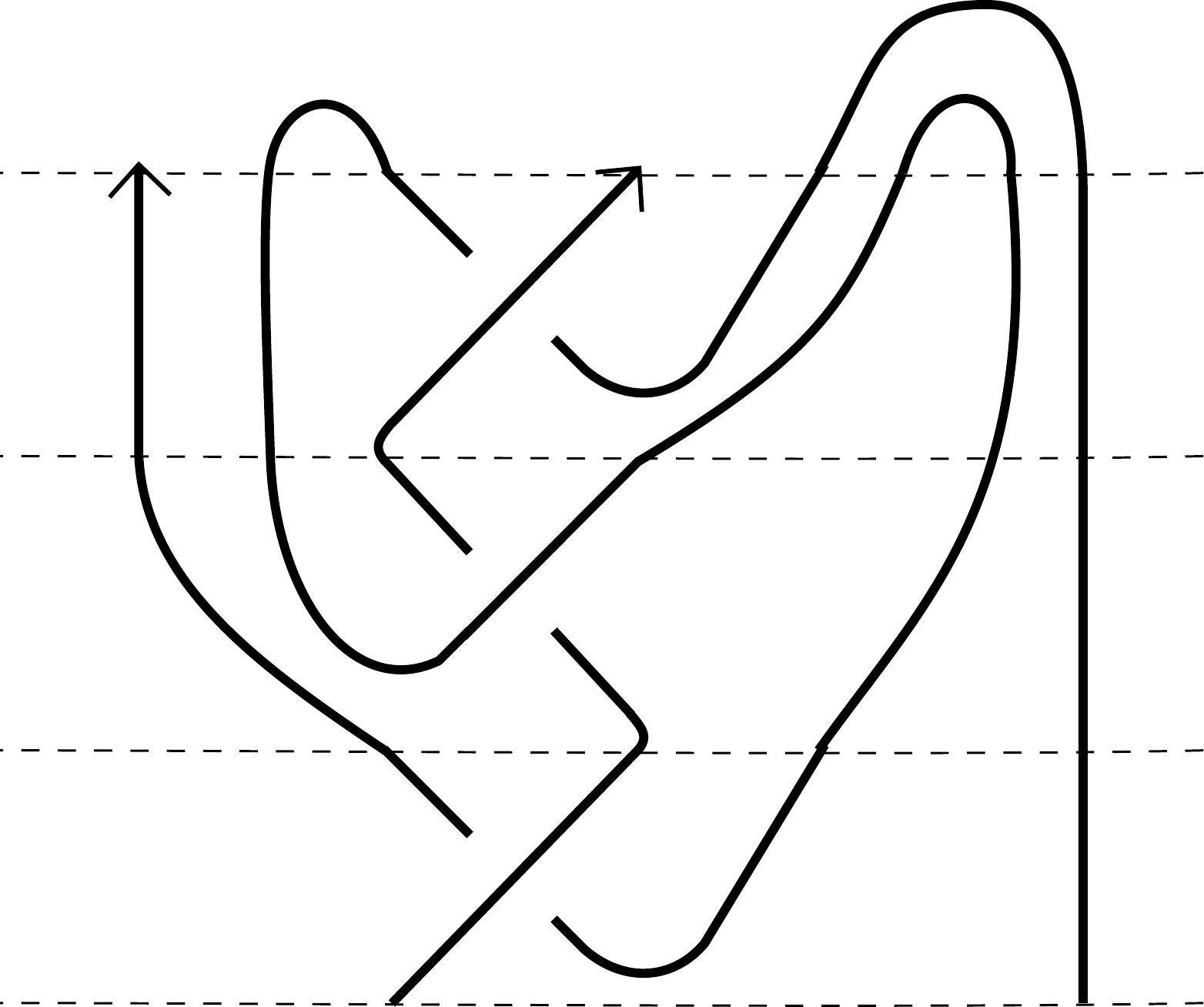}
\end{equation*}
Removing isolated maxima, we obtain the following rotational diagram of the original tangle:
\begin{equation*}
\centering
\includegraphics[width=0.2\textwidth]{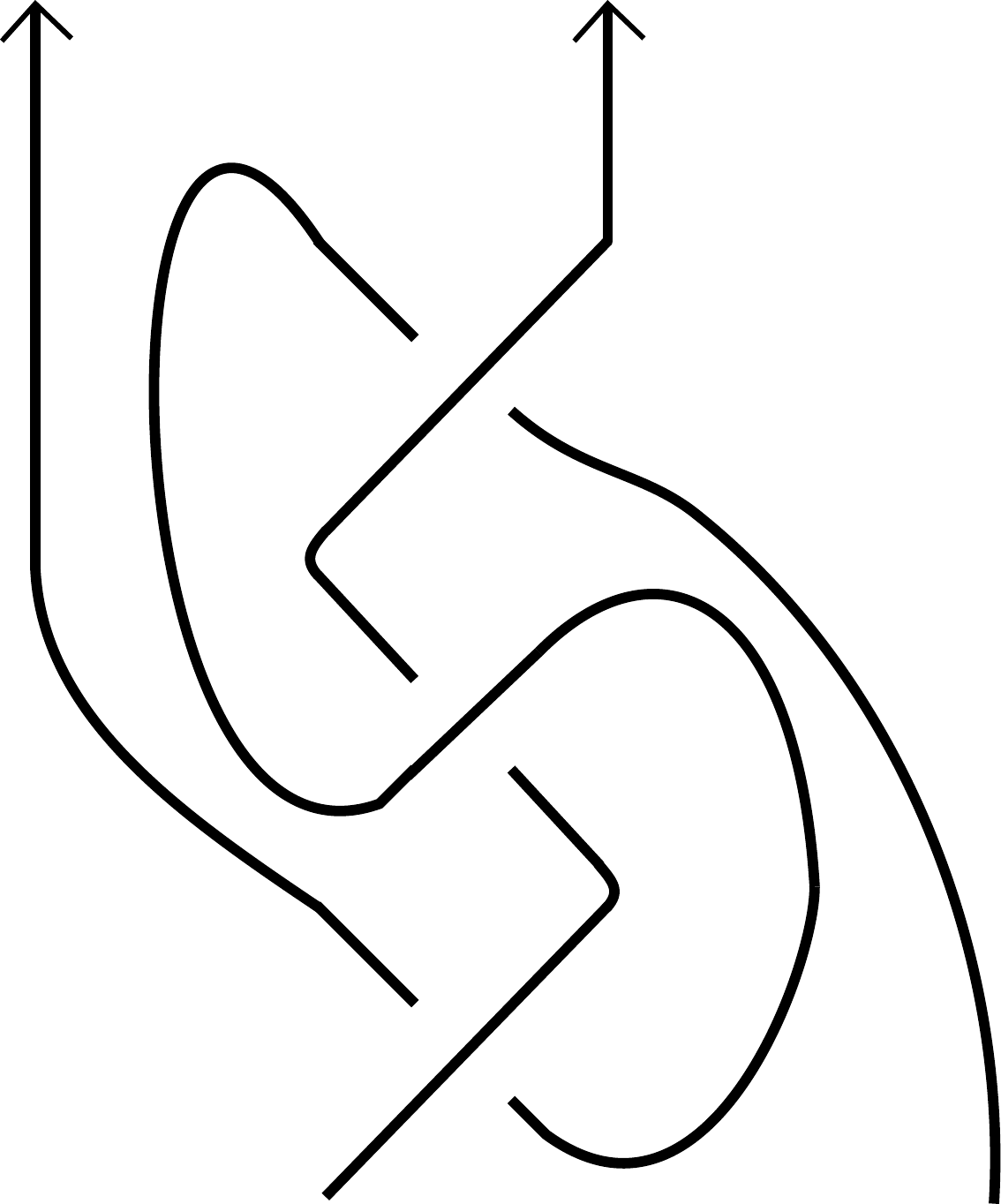}
\end{equation*}
\end{example}

We can hence restrict ourselves to study only tangle diagrams in rotational form. It was shown in \cite{becerra_reidemeister} that the following relations form a complete set of \textit{rotational Reidemeister moves}\index{rotational Reidemeister moves} for rotational tangle diagrams:
\begin{equation}\label{eq:R0}
\centre{
\labellist \small \hair 2pt
\pinlabel{$\overset{(\Omega 0a)}{=}$}  at 360 210
\pinlabel{$\overset{(\Omega 0b)}{=}$}  at 570 210
\pinlabel{$\overset{(\Omega 0c)}{=}$}  at 1725 210
\pinlabel{$,$}  at 1150 180
\endlabellist
\centering
\includegraphics[width=0.75\textwidth]{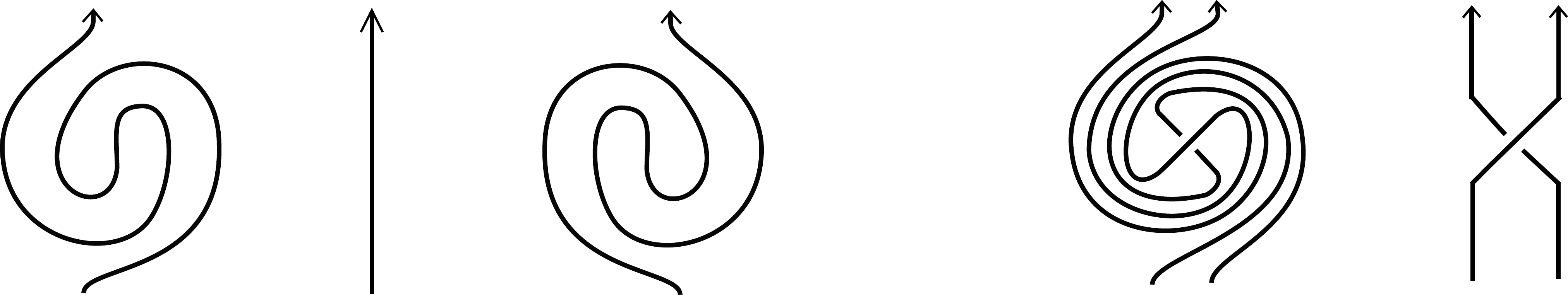}}
\end{equation}
\vspace*{7pt}
\begin{equation}\label{eq:R1_andR2a}
\centre{
\labellist \small \hair 2pt
\pinlabel{$\overset{(\Omega 0d)}{=}$}  at 375 210
\pinlabel{$\overset{(\Omega 1f)}{=}$}  at 1470 210
\pinlabel{$,$}  at 875 180
\endlabellist
\centering
\includegraphics[width=0.7\textwidth]{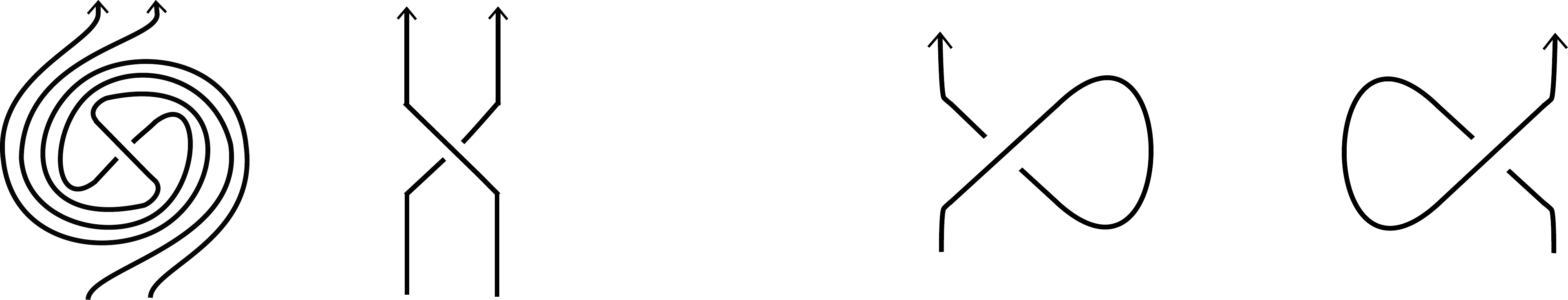}}
\end{equation}
\begin{equation}
\centre{
\labellist \small \hair 2pt
\pinlabel{$\overset{(\Omega 2a)}{=}$}  at 280 200
\pinlabel{$\overset{(\Omega 2b)}{=}$}  at 655 200
\pinlabel{$\overset{(\Omega 2c)}{=}$}  at 1640 200
\pinlabel{$,$}  at 1130 170
\endlabellist
\centering
\includegraphics[width=0.75\textwidth]{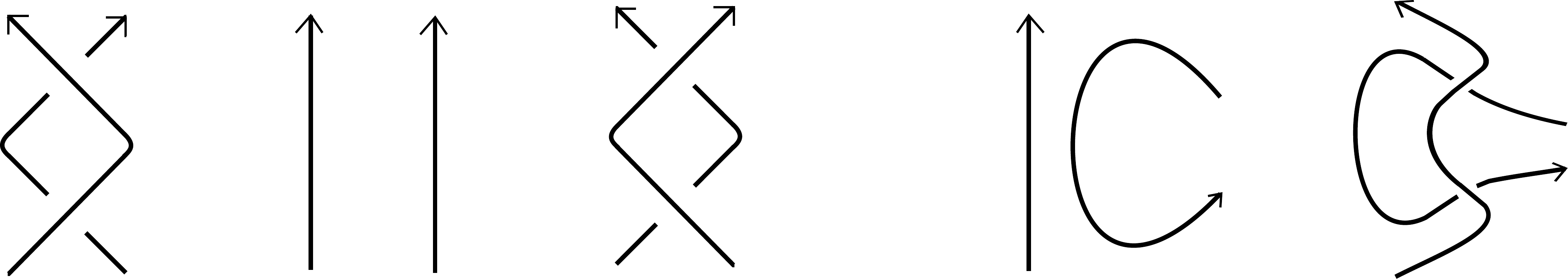}}
\end{equation}
\vspace*{7pt}
\begin{equation}\label{eq:R2c_and_R3}
\centre{
\labellist \small \hair 2pt
\pinlabel{$\overset{(\Omega 2d)}{=}$}  at 330 200
\pinlabel{$\overset{(\Omega 3)}{=}$}  at 1450 200
\pinlabel{$,$}  at 890 170
\endlabellist
\centering
\includegraphics[width=0.75\textwidth]{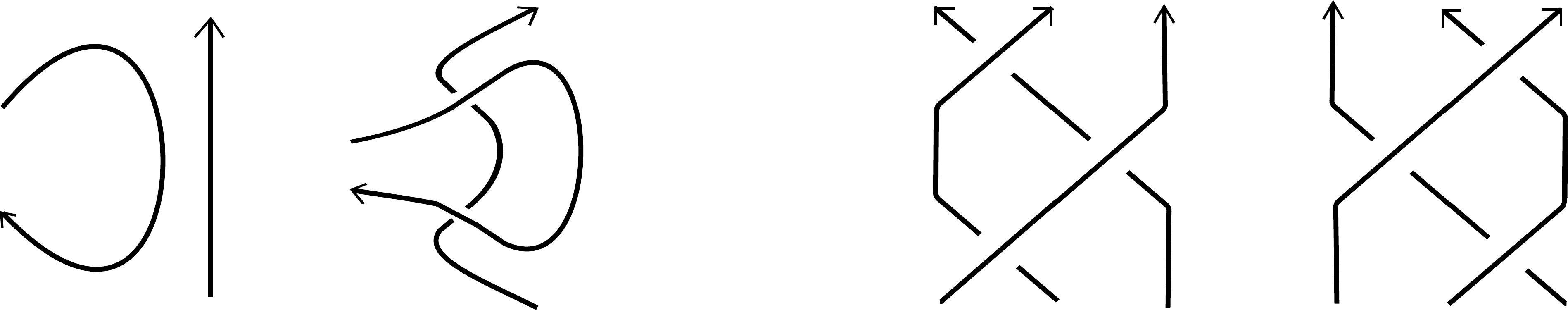}}
\end{equation}
\vspace*{5pt}

\noindent As usual, the above pictures are to be understood as identifying two tangle diagrams that are identical except in an open neighbourhood, where they look like as shown.

From this it follows

\begin{corollary}\label{cor:uptangles_rotdiag}
There are bijections
\vspace*{5pt}
\begin{center}
\begin{tikzcd}[column sep=2em,row sep=2ex]
 \frac{ \left\{ \parbox[c][2.5em]{7em}{\centering
                      {\small  \textnormal{upwards tangles \\ in $(D^1)^{\times 3}$ }}} \right\} }{  \parbox[c][1.5em]{8em}{\centering  {\small  \textnormal{isotopy} }}} \arrow[equals]{r} &   \frac{ \left\{ \parbox[c][2.5em]{8em}{\centering
                      {\small  \textnormal{upwards  tangle diagrams in $(D^1)^{\times 2}$ }}} \right\} }{ \parbox[c][2.5em]{8em}{\centering  {\small \textnormal{  planar isotopy and Reidemeister moves }}}} \arrow[equals]{r} &   \frac{ \left\{ \parbox[c][2.5em]{8em}{\centering
                      {\small  \textnormal{rotational tangle diagrams in $(D^1)^{\times 2}$ }}} \right\} }{ \parbox[c][3.8em]{8em}{\centering  {\small  \textnormal{Morse isotopy and rotational Reidemeister moves }}}}
\end{tikzcd}
\end{center}
\end{corollary}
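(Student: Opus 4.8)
The plan is to obtain both bijections by assembling ingredients that are already available: the leftmost bijection is the classical framed Reidemeister theorem restricted to tangles without closed components, while the middle bijection repackages \cref{lemma:every_tangle_has_rot_diag} together with the completeness of the rotational Reidemeister moves established in \cite{becerra_reidemeister}. Throughout I would stress that an upwards tangle is characterised among all tangles by having no closed components, a condition that is visible equally on a tangle and on any of its diagrams (the number $m$ of closed components is an isotopy invariant), so that restricting the correspondence from $\T$ to $\Tup$ is compatible with passage to diagrams.

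For the leftmost bijection, I would invoke the framed Reidemeister theorem recalled in \cref{sec:upwards_tangles}, which establishes a bijection between isotopy classes of framed tangles in $(D^1)^{\times 3}$ and tangle diagrams in $D^2$ modulo planar isotopy and framed Reidemeister moves. Since a tangle has no closed component if and only if each (equivalently, some) of its diagrams has none, this bijection restricts to one between upwards tangles (those with $m=0$) and upwards tangle diagrams, yielding the first equality of sets with no further work.

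For the middle bijection I would define a map $\Phi$ from the set of rotational diagrams modulo Morse isotopy and rotational Reidemeister moves to the set of all upwards tangle diagrams modulo planar isotopy and Reidemeister moves, induced by the tautological inclusion of rotational diagrams into all tangle diagrams. This $\Phi$ is well defined because Morse isotopy is a special case of planar isotopy and each rotational Reidemeister move of \eqref{eq:R0}--\eqref{eq:R2c_and_R3} relates two diagrams of the same tangle, hence becomes an equality once one passes to isotopy classes via the first bijection. Surjectivity of $\Phi$ is exactly \cref{lemma:every_tangle_has_rot_diag}: every upwards tangle, and hence every class in the target, admits a rotational diagram as a preimage. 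Injectivity of $\Phi$ is precisely the completeness of the rotational Reidemeister moves, namely that two rotational diagrams representing isotopic upwards tangles are related by Morse isotopy and the moves \eqref{eq:R0}--\eqref{eq:R2c_and_R3}; this is the theorem of \cite{becerra_reidemeister}. Composing $\Phi$ with the first bijection then gives the desired triple bijection.

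The genuinely hard input is the completeness of the rotational Reidemeister moves, which supplies the injectivity of $\Phi$; I would simply cite it from \cite{becerra_reidemeister} rather than reprove it. Once that result and \cref{lemma:every_tangle_has_rot_diag} are in hand, the corollary is pure bookkeeping: restricting the classical Reidemeister correspondence to the closed-component-free locus and verifying that the evident inclusion of diagram classes is a well-defined bijection.
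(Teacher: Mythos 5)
Your proposal is correct and follows exactly the route the paper intends: the corollary is stated there as an immediate consequence of the framed Reidemeister theorem (restricted to the closed-component-free locus), \cref{lemma:every_tangle_has_rot_diag} for surjectivity, and the completeness of the rotational Reidemeister moves from \cite{becerra_reidemeister} for injectivity. The paper gives no further argument, so your bookkeeping of the two bijections matches its implicit proof.
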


It should be clear that similar statement can also be made for tangles in $\T^+$.

\begin{remark}
Observe that rotational diagrams do not fit very well with the ribbon category structure of $\mathcal{T}$, as the full rotations of \eqref{eq:full_rotations} cannot be seen as morphisms. However, any rotational diagram of an upwards tangle can be decomposed as the merging in the plane of the elementary building blocks depicted below: the single, unknotted strand (denoted by $I$), the positive and negative crossings (denoted $X$ and $X^-$), and the anticlockwise and clockwise full rotations (denoted $C$ and $C^-$).

\begin{equation}\label{eq:crossings_and_spinners}
\centre{
\labellist \small \hair 2pt
\pinlabel{$I$}  at 20 -78
\pinlabel{$X$}  at 350 -78
 \pinlabel{$ X^- $}  at 800 -78
 \pinlabel{$ C$}  at 1140 -78
  \pinlabel{$ C^- $}  at 1560 -78
\endlabellist
\centering
\includegraphics[width=0.6\textwidth]{figures/building_blockss}}
\end{equation}
\vspace{10pt}

\noindent One of the main goals of this paper is to  introduce a more suitable categorical framework for upwards tangles using rotational diagrams, based on Joyal-Street-Verity's traced monoidal categories.
\end{remark}

To finish this subsection, let us focus on knots. There are two quantities that we can associate to the rotational diagram $D$ of a given knot. On the one hand, the \textit{writhe} of $D$ is defined as
\begin{equation*}
w(D) := \sum_c \mathrm{sign}(c)
\end{equation*}
where the sum is taken over all crossings $c$  of $D$. This value is exactly the framing of the knot. On the other hand, the \textit{rotation number} $\rot(D)$ of  diagram $D$ is the number of positive full rotations $C$ minus the number of negative full rotations $C^-$ that appear in the diagram (note however that this is \textit{not} an isotopy invariant of the knot). For a general upwards tangle, we can similarly talk of the rotation number of a given strand. This integer is in fact determined by the crossings of the diagram:

\begin{lemma}\label{lemma:rot_number}
Let $D$ be a rotational diagram of a knot. Then we have $$\rot (D) =  \sum_{ \substack{c \\ \mathrm{under \ first}}}    \sign (c) - \sum_{ \substack{c \\ \mathrm{over \ first}}}   \sign (c) ,$$
where the first (resp. second) summation is taken over all crossings $c$ in $D$ where the knot traverses the understrand (resp. overstrand) in the first place.
\end{lemma}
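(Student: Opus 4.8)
The plan is to identify both sides with a single geometric quantity attached to the underlying shadow of $D$ --- its turning (Whitney) number --- and then to pin down the common value by a regular-homotopy argument. First I would reinterpret the left-hand side analytically. Forgetting the over/under information, a rotational diagram $D$ of a knot is an immersed arc in the plane whose two endpoints both carry an upward tangent, so the Gauss (unit tangent) map has a well-defined integer degree $T$, the turning number of the shadow. The defining property of a rotational diagram is that all its maxima and minima are grouped into the full rotations of \eqref{eq:full_rotations}; away from these the strand is strictly monotone in height, so its tangent stays in the upper half-circle of directions and never points straight down. Consequently the downward direction is a regular value of the tangent map, attained (after a small Morse isotopy) exactly once inside each full rotation, positively for an anticlockwise rotation $C$ and negatively for a clockwise one $C^-$. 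Computing $T$ as the signed count of preimages of this regular value therefore gives $T = \#C - \#C^- = \rot(D)$.

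Next I would reinterpret the right-hand side as a shadow quantity. To each crossing $c$ associate its \emph{Whitney sign} $\eta(c) = \sign \det(t_1,t_2)$, where $t_1,t_2$ are the tangents of the two arcs through $c$ taken in the order in which the knot traverses them. A short comparison of the crossing-sign convention with $\eta$ shows that an under-first crossing satisfies $\sign(c) = -\eta(c)$ and an over-first crossing satisfies $\sign(c) = \eta(c)$; substituting these into the right-hand side collapses the two sums into $-\sum_c \eta(c)$. In particular the right-hand side, like $\rot(D)$, depends only on the shadow: it is unchanged when any crossing is switched, since switching flips both the traversal order and the sign.

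Finally I would prove $T = -\sum_c \eta(c)$ by a regular-homotopy argument. Both quantities are invariants of the regular-homotopy class of the shadow arc relative to its upward end-tangents: $T$ by the Whitney--Graustein theorem, and $\sum_c \eta(c)$ because a birth/death of a pair of double points contributes two opposite Whitney signs, while a triple-point passage leaves all Whitney signs unchanged. Since such regular-homotopy classes are classified precisely by $T \in \Z$, it suffices to verify the identity on one representative of each class, for instance the arc obtained from the trivial strand by inserting $|k|$ equally-oriented kinks: there $T = k$, while each kink is a single double point whose Whitney sign equals $-\sign(k)$, so that $-\sum_c \eta(c) = k = T$. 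Combining this with the two reinterpretations yields $\rot(D) = T = -\sum_c \eta(c) = \sum_{\mathrm{under\ first}} \sign(c) - \sum_{\mathrm{over\ first}} \sign(c)$, as claimed.

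The main obstacle is the first step: converting the purely combinatorial count $\#C - \#C^-$ into the analytic turning number $T$ relies on the height-monotonicity forced by rotational form together with a clean transversality/regular-value argument. After that, the delicate part is bookkeeping, since three different signs appear --- the crossing sign $\sign(c)$, the Whitney sign $\eta(c)$, and the orientation distinguishing $C$ from $C^-$ --- and they must all be normalised consistently; this is best fixed once and for all on the single-kink example so that the invariance argument of the last step then propagates the correct sign to every rotational diagram.
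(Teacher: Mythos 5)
Your proposal is correct, but it takes a genuinely different route from the paper. The paper proves the lemma combinatorially: it first reformulates the right-hand side as $\#(\text{bottom-right-first}) - \#(\text{bottom-left-first})$, then shows this difference and $\rot(D)$ are both preserved by all rotational Reidemeister moves except $(\Omega 1f)$, reduces to a partially closed braid, invokes conjugation invariance and the Markov theorem to reduce to the closure of a braid inducing the standard $n$-cycle, and checks that single case. You instead identify both sides with the turning (Whitney) number $T$ of the underlying immersed arc: $\rot(D)=T$ by counting signed preimages of the downward direction (which, by the height-monotonicity of a rotational diagram away from full rotations, occur exactly once per $C$ or $C^-$ with the right sign), and the right-hand side equals $-\sum_c \eta(c)$ with $\eta$ the Whitney sign, after which the identity is the long-curve Whitney formula, proved by regular-homotopy invariance of $\sum_c\eta(c)$ plus the relative Whitney--Graustein classification and a check on kinks. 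I verified your sign conventions against the paper's positive-crossing convention and the kink base case; they are consistent, and your bigon/triple-point analysis for the invariance of $\sum_c\eta(c)$ is sound. Your approach is more conceptual --- it explains \emph{what} the quantity is (the Whitney degree of the shadow) and avoids the Markov theorem --- at the cost of importing the relative Whitney--Graustein theorem; the paper's argument stays entirely within the rotational Reidemeister calculus already set up. One small imprecision: switching a crossing does not flip the traversal order of the two branches (that is a property of the parametrised shadow); it flips which of over/under is traversed first together with $\sign(c)$, which is what makes each term $\pm\sign(c)$ invariant --- but this remark is anyway redundant once the right-hand side is rewritten as $-\sum_c\eta(c)$.
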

\begin{proof}
We claim that  $$\rot (D) = \#  \left( \parbox[c]{8em}{\centering
 {\small  crossings with bottom right first }}
            \right) - \#  \left( \parbox[c]{8em}{\centering
 {\small  crossings with bottom left first }}
            \right),$$
where each of the summands refers to the number of crossings (regardless the sign) that the knot hits first by the bottom right or left endpoint. The equality of the statement follows from the claim as 
$$
\#  \left( \parbox[c]{8em}{\centering
 {\small  crossings with bottom right first }}
            \right) =  \sum_{ \substack{c \mathrm{ \ positive} \\ \mathrm{under \ first}}}    \sign (c) - \sum_{ \substack{c \mathrm{ \ negative} \\ \mathrm{over \ first}}}    \sign (c)  $$
and
$$
\#  \left( \parbox[c]{8em}{\centering
 {\small  crossings with bottom left first }}
            \right) =  \sum_{ \substack{c \mathrm{ \ positive} \\ \mathrm{over \ first}}}    \sign (c) - \sum_{ \substack{c \mathrm{ \ negative} \\ \mathrm{under \ first}}}    \sign (c).
$$

Let us now prove the claim. The first observation is that the all rotational Reidemeister moves  except the $(\Omega 1f)$ move on the left in \eqref{eq:R1_andR2a} preserve both the rotation diagram and the difference $d(D):=$ \#(bottom right first) $-$  \#(bottom left first). Using these moves, we can isotope a rotational knot diagram into a braid closure, where the sign of every original rotation is unchanged, so that the closure is taken both with positive and negative rotations, let us say $n$ and $m$ respectively.  We can then turn every $C^-$ into $C^+$, as follows:
\begin{equation*}
\centre{
\labellist \small \hair 2pt
\pinlabel{$ b$}  at 527 277
\pinlabel{\tiny{$ \cdots$}}  at 100 441
\pinlabel{\tiny{$ \cdots$}}  at 325 441
\pinlabel{\tiny{$ \cdots$}}  at 731 441
\pinlabel{\tiny{$ \cdots$}}  at 990 441
\endlabellist
\centering
\includegraphics[width=0.30\textwidth]{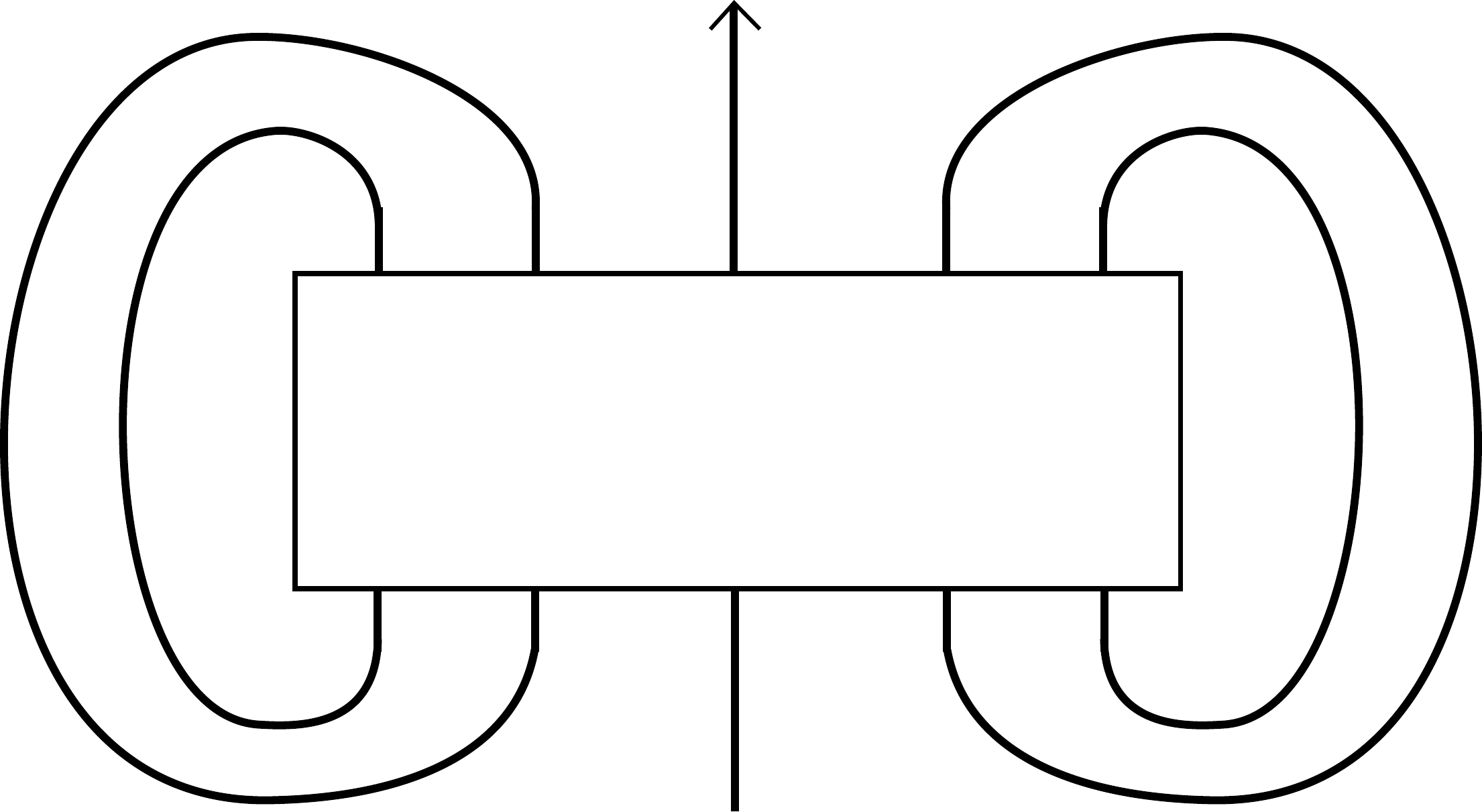}}  = 
\centre{
\labellist \small \hair 2pt
\pinlabel{$ b$}  at 672 483
\pinlabel{\tiny{$ \cdots$}}  at 902 623
\pinlabel{\tiny{$ \cdots$}}  at 500 623
\endlabellist
\centering
\includegraphics[width=0.35\textwidth]{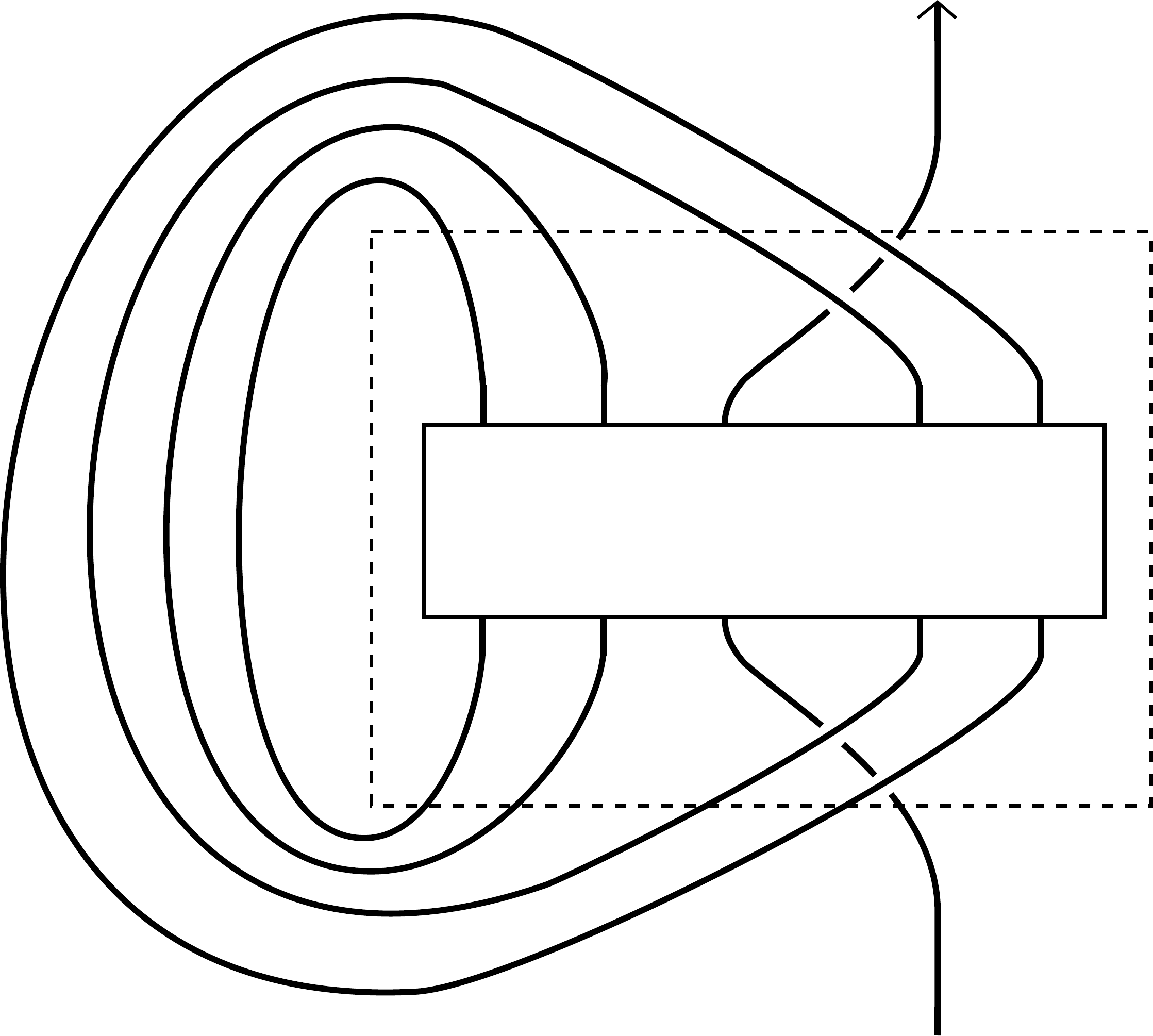}}
\end{equation*}
Note that the rotation number is increased by $2m$, and so is the difference $d$. Therefore, it suffices to show that $\rot (D)=d(D)$ for $D= \mathrm{cl}(b)$ a braid closure only with positive rotations, as depicted below:
\begin{equation*}
\centre{
\labellist \small \hair 2pt
\pinlabel{$ b$}  at 420 310
\pinlabel{\tiny{$ \cdots$}}  at 80 441
\pinlabel{\tiny{$ \cdots$}}  at 325 441
\endlabellist
\centering
\includegraphics[width=0.20\textwidth]{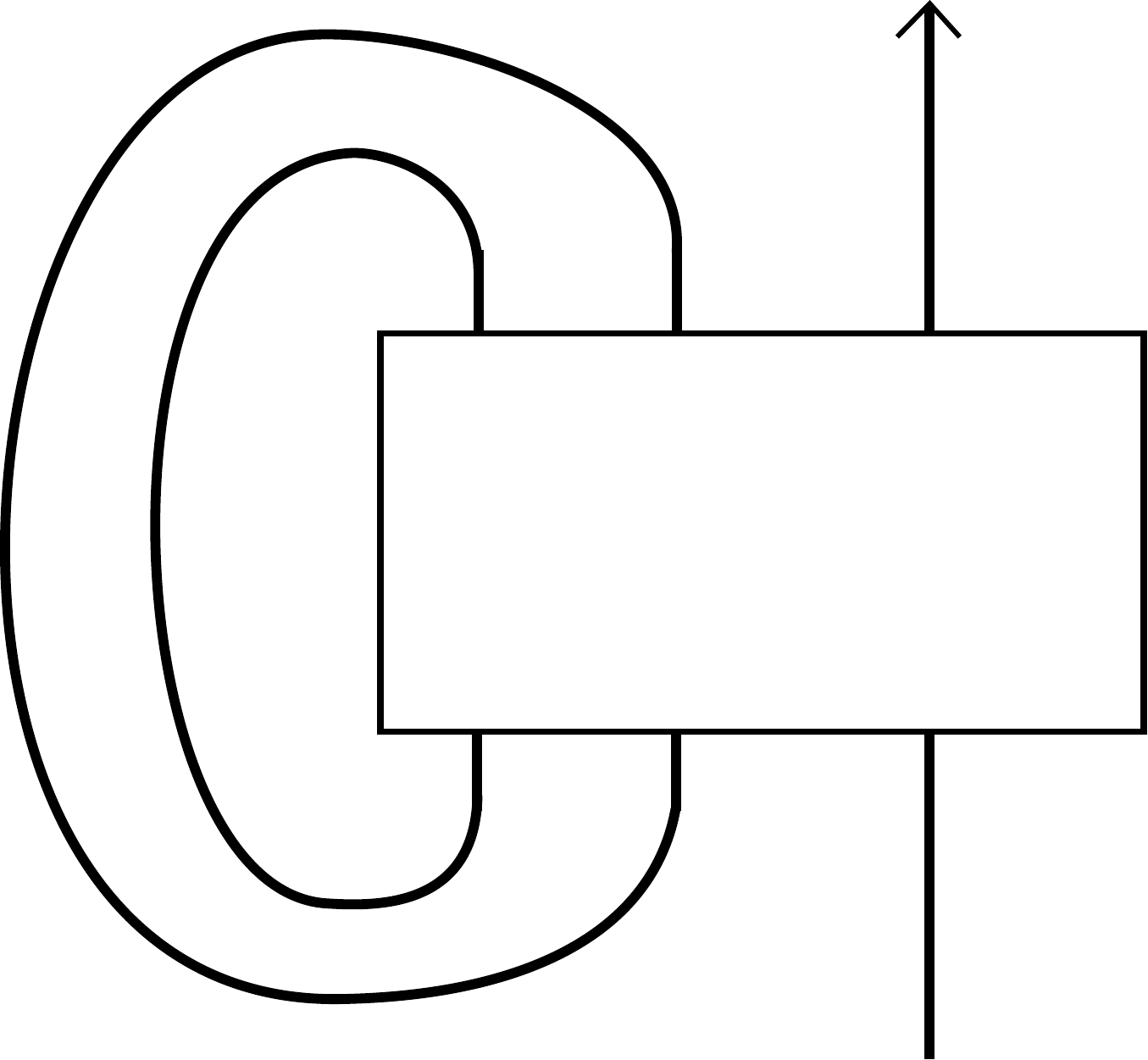}} 
\end{equation*}
Now, recall that the closure of a braid $b \in B_n$ is a knot precisely when the induced permutation $\sigma_b \in \mathfrak{S}_n$ is a cycle of length $n$. Let $c \in B_n$ be a braid such that $\sigma_c \sigma_b \sigma_c^{-1} = (1, \ldots , n)$ (since the conjugacy classes of $\mathfrak{S}_n$ are exactly given by the cycle type, this is always possible). In particular, $\mathrm{cl}(b)=\mathrm{cl}(cbc^{-1})$ by the Markov theorem. The key observation is that conjugation preserves the difference \#(bottom right first) $-$  \#(bottom left first), for it suffices to check it for the generators of the braid group, for which it is straightforward. What this means is that it is enough to check $\rot (D) = d(D)$ for $D= \mathrm{cl}(b)$ with $\sigma_b = (1, \ldots , n)$. Since the signs of the crossings play no role in the equality to demonstrate, we can freely change the signs in the braid. Therefore, we can replace $b$ by the following braid $b'$:
\begin{equation*}
\centre{
\labellist \small \hair 2pt
\pinlabel{$ b'$}  at 628 324
\pinlabel{\tiny{$ \cdots$}}  at 436 500
\endlabellist
\centering
\includegraphics[width=0.20\textwidth]{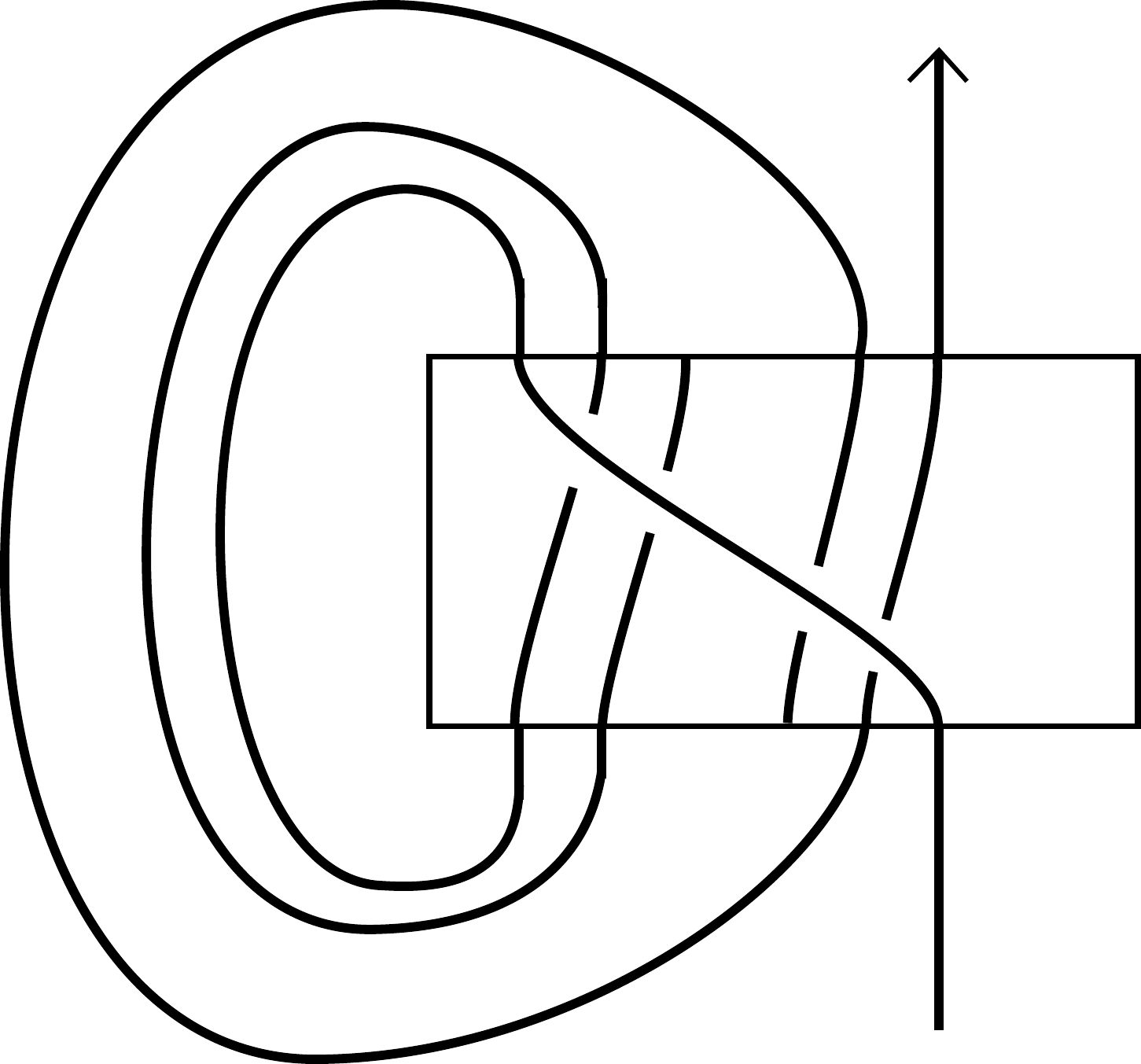}} 
\end{equation*}
For this digram, the equality  $\rot (D) = d(D)$  holds, and we conclude.
\end{proof}

\begin{corollary}\label{cor:rot+wt}
Let $D$ be a knot diagram. Then we have $$ \rot (D) + \mathrm{wr} (D) = 2  \cdot \sum_{ \substack{c \\ \mathrm{under \ first}}}    \sign (c) $$ (and in particular $\rot (D) + \mathrm{wr} (D)$ is always even).
\end{corollary}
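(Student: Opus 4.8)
The plan is to deduce this directly from \cref{lemma:rot_number}, which has just been established, together with the definition of the writhe; no further topological input is needed. The single idea is that the two sums appearing in \cref{lemma:rot_number} are disjoint and exhaust all of $\mathrm{cross}(D)$, so that their sum recovers the writhe while their difference recovers the rotation number.

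First I would note that, since $D$ is a diagram of a single-component knot, traversing the knot visits each crossing $c$ exactly twice: once along the overstrand and once along the understrand. Consequently every crossing belongs to exactly one of the two classes ``under first'' and ``over first'', depending on which of its two strands the knot runs through earlier. This gives a partition of the crossing set, and hence the writhe splits as
\begin{equation*}
\mathrm{wr}(D) \;=\; \sum_{c} \sign(c) \;=\; \sum_{\substack{c \\ \mathrm{under \ first}}} \sign(c) \;+\; \sum_{\substack{c \\ \mathrm{over \ first}}} \sign(c).
\end{equation*}

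Next I would simply add this identity to the formula of \cref{lemma:rot_number},
\begin{equation*}
\rot(D) \;=\; \sum_{\substack{c \\ \mathrm{under \ first}}} \sign(c) \;-\; \sum_{\substack{c \\ \mathrm{over \ first}}} \sign(c),
\end{equation*}
so that the ``over first'' contributions cancel and the ``under first'' contributions double, yielding $\rot(D) + \mathrm{wr}(D) = 2\sum_{c\ \mathrm{under\ first}} \sign(c)$, as claimed. The parity assertion is then immediate, as the right-hand side is manifestly twice an integer.

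There is no genuine obstacle here: the result is a formal consequence of the preceding lemma, and the only point meriting a word is the observation that the under/over dichotomy partitions $\mathrm{cross}(D)$ for a connected (one-component) diagram, which is precisely what makes the two sums in \cref{lemma:rot_number} complementary. If one wished to state the analogue for a multi-component upwards tangle, this same partition would have to be handled component by component, but for a knot the argument is exactly the one-line addition above.
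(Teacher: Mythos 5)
Your argument is correct and is precisely the derivation the paper intends (the corollary is stated without proof, being an immediate consequence of \cref{lemma:rot_number} and the definition of the writhe). The one observation you rightly flag — that for a one-component diagram the under-first/over-first dichotomy partitions $\mathrm{cross}(D)$, so the two sums in the lemma add up to $\mathrm{wr}(D)$ — is exactly the point, and the rest is the one-line addition you give.
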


\subsection{Universality of \texorpdfstring{$\Tup$}{Tup}}\label{subsec:universality_Tup}

Let us now describe a universal property that $\Tup$ satisfies, which essentially is a modification of Joyal-Street-Verity's notion of trace that we explained in \cref{subsec:T+}.

Now let $\C$ be a strict balanced monoidal category, and  consider $P:\C  \to \SS$ a strict monoidal functor preserving the balanced structure. Observe that for such a functor to exist, we must have $P(X)=P(Y)=n$ whenever there is an arrow $f: X \to Y$. Then the image $P(f)$ of a morphism $f$ as before  will be denoted by $\sigma_f \in \mathfrak{S}_n$.  Moreover, given objects $X, Y, U$ in $\C$ with $P(X)=P(Y)=n$ and $P(U)=m$, we will write  
\begin{equation}
 \mathrm{Hom}_\C^{ad} (X \otimes U, Y \otimes U) 
\end{equation}
for the set of morphisms $f: X \otimes U \to Y \otimes U$ with the property that the permutation $\sigma_f \in \mathfrak{S}_{n+m}$ contains no cycles of length $\leq m$ including only elements of the set $\{ n+1, \ldots , n+m \}$, and will call them \textit{$(X,Y,U)$-admissible}\index{admissible morphism}.

Let us introduce  now  the  main concept of this section: let $\C$ be a strict balanced category with monoidal product $ \otimes$, unit $\bm{1}$, braiding $\tau$ and twist $ \theta$.
An \textit{open trace}  for $\C$ is a pair $(P, \mathrm{tr})$  where $P: \W \to \mathfrak{S}$ is a strict monoidal functor preserving the balanced structure   and $\mathrm{tr}$  is a family of natural set-theoretical maps
\begin{equation}
\mathrm{tr}_{X,Y}^U : \mathrm{Hom}_\C^{ad} (X \otimes U, Y \otimes U) \to \hom{\C}{X}{Y}
\end{equation}
satisfying the following axioms:
\begin{enumerate}[leftmargin=4\parindent]
\item[(OTC1)] For every $f:X \to Y$,  $$\mathrm{tr}_{X,Y}^{\bm{1}}(f)=f.$$
\item[(OTC2)] For every $g: X \otimes U \otimes V  \to  Y \otimes U \otimes V  $ which is $(X,Y, U \otimes V)$-admissible,  $$\mathrm{tr}_{X,Y}^{U \otimes V}(g)= \mathrm{tr}_{X,Y}^U (\mathrm{tr}_{X \otimes U,Y \otimes U}^V(g)).$$
\item[(OTC3)] For every $f: X_1 \otimes U \to Y_1 \otimes U$ which is $(X_1, Y_1,U)$-admissible and every $g: X_2  \to Y_2 $,  
\begin{align*}
\mathrm{tr}_{X_1,Y_1}^U(f) \otimes g &= \mathrm{tr}_{X_1 \otimes X_2,Y_1 \otimes Y_2}^U \Big( (\id_{Y_1} \otimes \tau^{-1}_{Y_2,U})(f \otimes g)(\id_{X_1} \otimes\tau_{X_2,U})  \Big) \\
&= \mathrm{tr}_{X_1 \otimes X_2,Y_1 \otimes Y_2}^U \Big( (\id_{Y_1} \otimes \tau_{U,Y_2})(f \otimes g)(\id_{X_1} \otimes\tau^{-1}_{U,X_2})  \Big).
\end{align*}
\item[(OTC4)] For every object $U$, 
$$ \mathrm{tr}_{U,U}^U(\tau_{U,U}) = \theta_U \qquad , \qquad  \mathrm{tr}_{U,U}^U(\tau^{-1}_{U,U}) = \theta^{-1}_U.$$
\end{enumerate}

A balanced monoidal category $\C$ equipped with an open trace $(P, \mathrm{tr})$ is called a \textit{open-traced monoidal category}.

\begin{example}\label{ex:Tup_open_traced}
The category $\Tup$ of upwards tangles is an open traced monoidal category, as follows:  there is a canonical strict monoidal functor $P:\Tup \to \SS$ resulting from applying \cref{rmk:functor_M->C} to the family of monoid maps \eqref{eq:pi_Tup->S}. Note that indeed this functor preserves the balancing structure.

We can define a canonical open trace relative to this functor by closing up some components. More precisely, an $(+^n, +^n, +^m)$-admissible upwards tangle is an $(n+m)$-component upwards tangle $T$ such that its induced permutation $\sigma_T \in \SS_{n+m}$ contains no cycles of length $\leq m$ including only elements of the set $\{ n+1, \ldots , n+m \}$. Then define $\mathrm{tr}_{+^n, +^n}^{+^m}(T)$ as the tangle resulting from closing up the $m$ rightmost components of $T$, as depicted below:

\begin{equation}\label{eq:generic_closure}
\centre{
\labellist \small \hair 2pt
\pinlabel{$ T$}  at 216 294
\pinlabel{\footnotesize{$ \cdots$}}  at 116 441
\pinlabel{\footnotesize{$ \cdots$}}  at 325 441
\pinlabel{\footnotesize{$ n$}}  at 116 480
\pinlabel{\footnotesize{$ m$}}  at 325 480
\endlabellist
\centering
\includegraphics[width=0.25\textwidth]{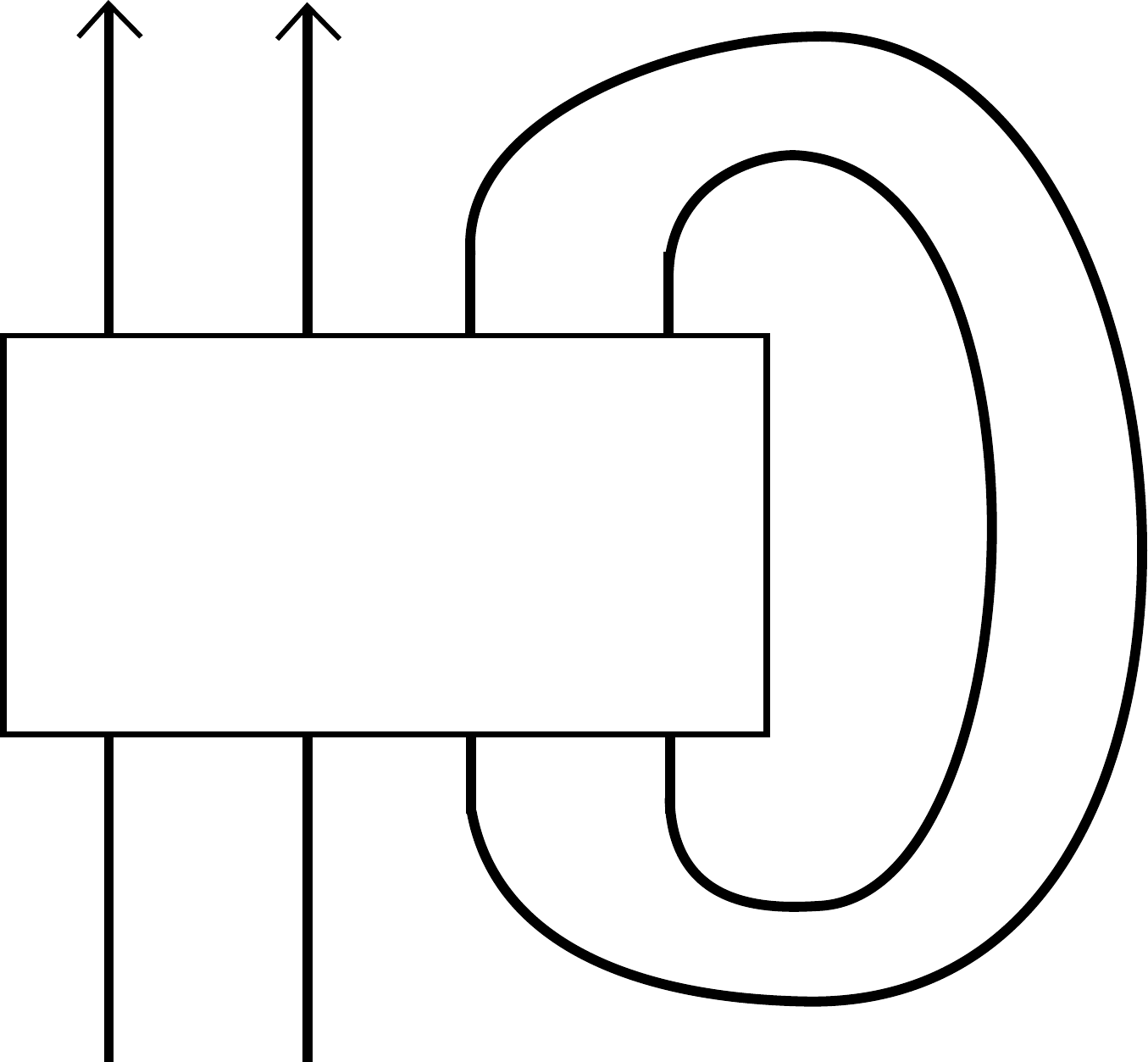}}
\end{equation}

Observe that the condition imposed on the induced permutation $\sigma_T$ guarantees that $\mathrm{tr}_{+^n, +^n}^{+^m}(T)$ contains no closed components and thence it is a morphism of $\Tup$. That the axioms hold is a consequence of the (framed) Reidemeister moves.
\end{example}

\begin{example}\label{ex:T+_open_traced}
Let us see how the category $\T^+$ inherits an open-trace structure from its trace. Consider the collection of monoid maps $$ \operatorname{End}_{\T^+}(+^n) \to \operatorname{End}_{\Tup}(+^n) \overset{\pi}{\to} \SS_n $$ where the first arrow is the canonical map that forgets the closed components and $\pi$ is as in \eqref{eq:pi_Tup->S}. Just as in \cref{ex:Tup_open_traced}, these maps assemble into a balanced functor $P: \T^+ \to \SS$. The restriction of the trace of $\T^+$ to admissible tangles trivially gives an open-trace structure.
\end{example}

We need one last observation before stating the universal property of $\Tup$.  Fix a positive integer $n >0$. For every $k \geq 0$, there is a group homomorphism $\SS_k \to \SS_{kn}$ that sends $\sigma \in \SS_k$ to the permutation that shuffles  $\{ 1, \ldots, kn  \}$ as blocks of $n$ elements according to $\sigma$. It is readily verified that these morphisms assemble into a monoidal functor $G_n: \SS \to \SS$ that preserves the balanced structure.

\begin{theorem}\label{thm:UP_opentraced}
Let $(\C, P, \mathrm{tr})$ be a strict open-traced  monoidal category, and let $X \in \C$.  Then there exists a unique strict monoidal functor $$F_X=F: \mathcal{T}^{\mathrm{up }} \to  \C$$ such that $F(+)=X$ and $F$ preserves the open-traced  structure, that is, $$F(\, \PC \,)= \tau_{X,X} \quad , \quad F(\begin{array}{c}
\centering
\includegraphics[scale=0.035]{figures/twist_gen} \end{array})= \theta_X  \quad , \quad F(\mathrm{tr}_{+^n , +^n}^{+^m}(f))=\mathrm{tr}_{X^{\otimes n}, X^{\otimes n}}^{X^{\otimes m}}(Ff) $$
whenever $f: +^{n+m} \to +^{n+m}$ is $(+^n, +^n, +^m)$-admissible, and the following diagram commutes,
$$
\begin{tikzcd}
\Tup \rar{F} \dar & \C \dar{P} \\
\mathfrak{S} \rar{G_{| X |}} & \mathfrak{S}
\end{tikzcd}
$$
where $| X |$ denotes the image of $X$ under the structure functor $P: \C \to \SS$.

In other words, $\Tup$ is the ``free strict open-traced monoidal category generated by one object''.
\end{theorem}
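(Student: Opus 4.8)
The plan is to establish uniqueness and existence separately, taking as the essential input the presentation of $\Tup$ by rotational diagrams. By \cref{lemma:every_tangle_has_rot_diag} every arrow of $\Tup$ admits a rotational diagram, and by \cref{cor:uptangles_rotdiag} the category $\Tup$ is recovered from such diagrams modulo Morse isotopy and the rotational Reidemeister moves $(\Omega 0a)$--$(\Omega 3)$ of \eqref{eq:R0}--\eqref{eq:R2c_and_R3}. Since a rotational diagram is built from the five blocks $I, X, X^-, C, C^-$ of \eqref{eq:crossings_and_spinners} by horizontal juxtaposition ($\otimes$) and vertical stacking ($\circ$), a strict monoidal functor out of $\Tup$ is entirely determined by its values on these blocks; the open trace is not needed to \emph{define} such a functor, only to \emph{constrain} it.

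\textbf{Uniqueness.} Preservation of $\otimes$ and of the braiding forces $F(+)=X$, $F(\,\PC\,)=\tau_{X,X}$ and $F(\,\NC\,)=\tau_{X,X}^{-1}$. The full rotations $C,C^-$ are not independent generators: the framed Reidemeister-I move $(\Omega 1f)$ of \eqref{eq:R1_andR2a} ties them to the twist, which by (OTC4) is itself the open trace of a crossing, $\theta_X=\mathrm{tr}^{X}_{X,X}(\tau_{X,X})$. Hence $F(C)$ and $F(C^-)$ are already pinned down by $F(\,\PC\,)=\tau_{X,X}$, by $F$ of the twist being $\theta_X$, and by open-trace preservation. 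As the five blocks generate $\Tup$ under $\otimes$ and $\circ$, such an $F$ is unique.

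\textbf{Existence.} Define $F$ on the five blocks by the forced values above and extend to all rotational diagrams by $F(D_1\otimes D_2)=FD_1\otimes FD_2$ and $F(D_2\circ D_1)=FD_2\circ FD_1$. The crux is \emph{well-definedness}: one must check that $F$ descends along the quotient of \cref{cor:uptangles_rotdiag}, i.e. that equal rotational diagrams receive equal morphisms. Morse isotopy is respected because $\otimes$ and $\circ$ in $\C$ are associative and unital, and each rotational Reidemeister move becomes an identity of morphisms in $\C$ to be verified from the axioms of a balanced category (naturality and the two hexagon identities for $\tau$, together with the formula for $\theta_{X\otimes Y}$) supplemented by (OTC1)--(OTC4) and naturality of $\mathrm{tr}$. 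The moves of Reidemeister type $2$ and $3$ reduce to the braiding axioms exactly as in the free braided and balanced cases $\B^0,\B$, so the genuinely new verifications are the moves involving full rotations, namely the $(\Omega 0)$-family and $(\Omega 1f)$, which follow from (OTC4) and naturality. This case analysis, although finite, is the main obstacle, as it is where the open-traced axioms must be shown to be exactly strong enough to validate the interaction of full rotations with crossings.

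It then remains to confirm that $F$ preserves the whole open-traced structure and fits into the required square. Preservation of the braiding and twist holds by construction. For the open trace, given an $(+^n,+^n,+^m)$-admissible $f$, its trace is computed by the partial closure of \cref{ex:Tup_open_traced}; using (OTC2) to split the closure of the rightmost $m$ strands into iterated single-strand closures and (OTC3)--(OTC4) to evaluate each, one matches $F(\mathrm{tr}_{+^n,+^n}^{+^m}(f))$ with $\mathrm{tr}_{X^{\otimes n},X^{\otimes n}}^{X^{\otimes m}}(Ff)$, the admissibility hypothesis guaranteeing that no closed component ever arises on either side. Finally the square with $P$ and $G_{|X|}$ commutes because both composites agree on the generators — for the crossing, $P_\C(\tau_{X,X})=\tau_{|X|,|X|}$ is the transposition of the two length-$|X|$ blocks, which is exactly $G_{|X|}$ applied to the transposition $P(\,\PC\,)$, and for the twist both sides are the identity permutation — and the generators determine everything. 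This proves that $\Tup$ is the free strict open-traced monoidal category generated by one object.
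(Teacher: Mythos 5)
Your proposal takes the ``generators and relations'' route that works for $\T$ (Turaev's theorem), but this is not the paper's argument and, as written, it has a genuine gap at its foundation: you never define $F(C)$ and $F(C^-)$, and in a general open-traced monoidal category there is nothing to define them \emph{as}. The open-traced structure of $\C$ supplies only $\tau$, $\theta$ and the partial trace; it does not supply an ``inverse balancing morphism'' $X\to X$ that could serve as the image of an isolated full rotation. Saying that $F(C^{\pm})$ is ``pinned down by $(\Omega 1f)$, by $\theta_X=\mathrm{tr}^X_{X,X}(\tau_{X,X})$ and by open-trace preservation'' is not a construction: $(\Omega 1f)$ relates composite diagrams in which $C^{\pm}$ appears entangled with a crossing, and extracting a value for the single block from it requires inverting operations that are not invertible. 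The difficulty is not cosmetic. In the paper's main application the target is $\mathcal{E}(A)$, whose endomorphisms of the object $1$ form the monoid $\mathcal{I}_1$ generated by partial closures of braids; the would-be image $(\kappa^{-1},\id)$ of $C$ need not belong to $\mathcal{I}_1$ at all. So a block-by-block definition of $F$ on rotational diagrams leaves the target category at intermediate stages, which is exactly the point of the paper's warning that the full rotations of \eqref{eq:full_rotations} ``cannot be seen as morphisms'' compatible with the categorical structure. Relatedly, your claim that the type-2 and type-3 moves ``reduce to the braiding axioms exactly as in $\B^0,\B$'' is false for $(\Omega 2c)$ and $(\Omega 2d)$, whose algebraic shadows (XC2c), (XC2d) involve $\kappa$ essentially.

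The paper's proof circumvents all of this by a different reduction: it first shows that every upwards tangle admits a \emph{generic} rotational diagram, namely the partial closure $\mathrm{cl}_n(b)$ of a braid as in \eqref{eq:generic_closure} (pushing all rotations to the right using $(\Omega 2c)$, $(\Omega 3)$ and $(\Omega 1f)$), then proves a relative framed Markov theorem — two braids have isotopic partial closures iff they are related by the $k$-open framed Markov moves (MI), (MII), established via the Lambropoulou--Rourke $L$-moves. Uniqueness is then immediate ($F$ on braids is forced by the braiding, and $F$ on partial closures by trace preservation), and existence is obtained by \emph{defining} $F(\mathrm{cl}_n(b)):=\mathrm{tr}^{X^{\otimes n}}_{X^{\otimes k},X^{\otimes k}}(Fb)$ and checking invariance under (MI) (naturality of $\mathrm{tr}$ in $U$) and (MII) (axiom (OTC4)). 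This relative Markov theorem is the substantive geometric input that your proposal bypasses; without it, or without an honest definition of $F$ on the rotation blocks together with a verification of the full list \eqref{eq:R0}--\eqref{eq:R2c_and_R3}, the existence half of your argument does not get off the ground. If you want to salvage your route, you would have to reformulate the rotational-diagram presentation of $\Tup$ so that only composites in which every rotation is cancelled or closed off are regarded as morphisms — which is, in effect, re-deriving the paper's reduction to partial braid closures.
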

\begin{proof}
The strategy of the proof is the same as that for \cref{thm:turaev}, see e.g. \cite[\S I.3--4]{turaev}.  To start with, we need to work with a particular class of diagrams for upwards tangles. A \textit{generic rotational diagram}\index{generic rotational diagram} $D$ of an upwards tangle is a rotational diagram that results from a partial closure of a braid diagram as in \eqref{eq:generic_closure} (where $T$ in this case is required to be a braid diagram). 

First we claim that every upwards tangle has a generic rotational diagram. Indeed by \cref{lemma:every_tangle_has_rot_diag} every upwards tangle has a rotational diagram. By a sequence of rotational Reidemeister moves $(\Omega 2c)$ and $(\Omega 3)$, we can move all full rotations $C$ (resp. $C^-$ to the leftmost (resp. rightmost) part of the diagram so that we obtain a braid diagram that has been partially closed on the right and on the left.
\begin{equation*}
\centre{
\labellist \small \hair 2pt
\pinlabel{$ b$}  at 530 270
\pinlabel{\scriptsize{$ \cdots$}}  at 525 420
\pinlabel{\scriptsize{$ \cdots$}}   at 730 420
\pinlabel{\scriptsize{$ \cdots$}}   at 323 420
\endlabellist
\centering
\includegraphics[width=0.35\textwidth]{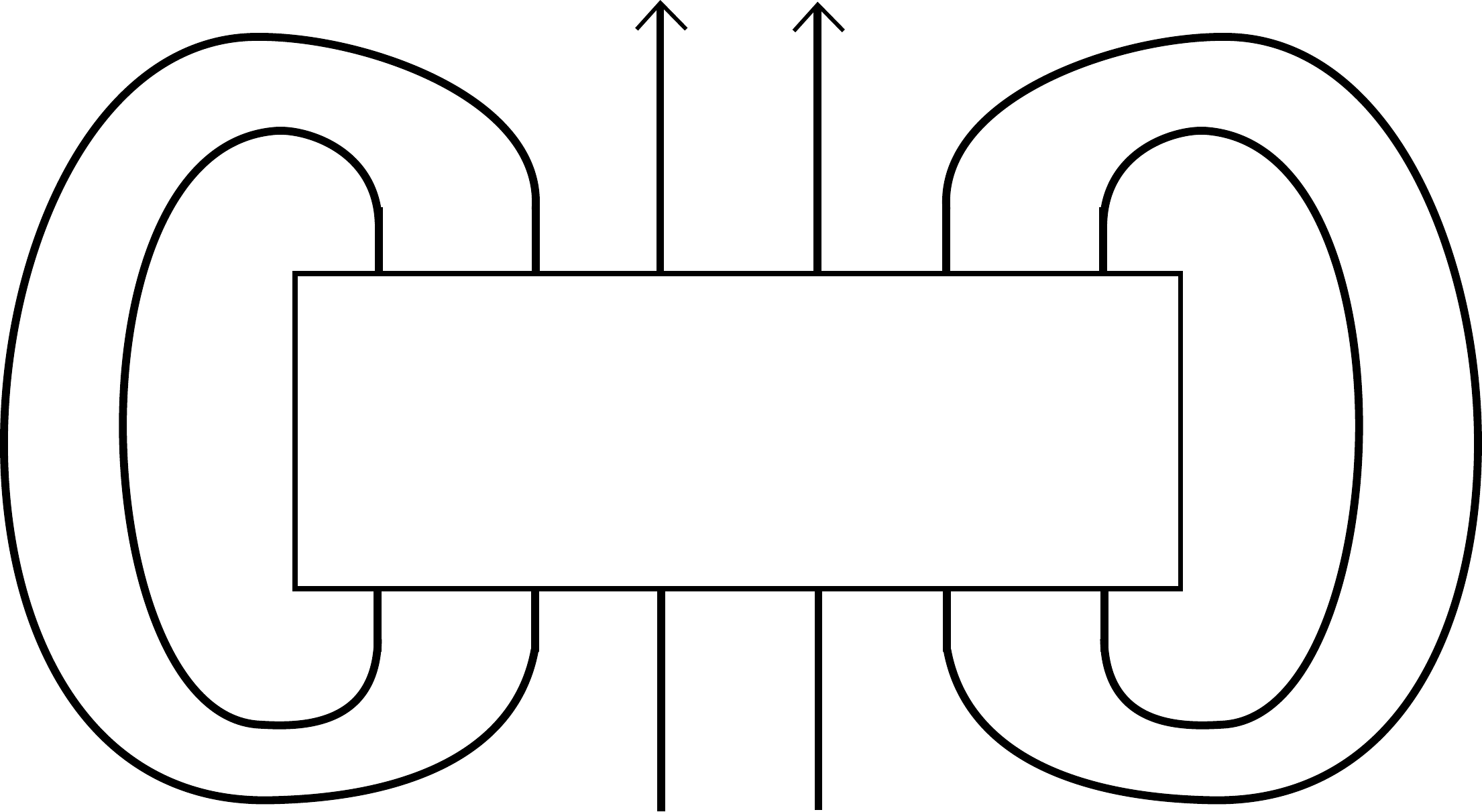}}
\end{equation*}
 By $(\Omega 1f)$ we can move all $C$ to the right-hand side as $C^-$ by adding some crossings on the top and bottom of the braid:
\begin{equation*}
\centre{
\labellist \small \hair 2pt
\pinlabel{$ b$}  at 360 620
\pinlabel{\scriptsize{$ \cdots$}}  at 564 744
\pinlabel{\scriptsize{$ \cdots$}}  at 352 744
\pinlabel{\scriptsize{$ \cdots$}}  at 152 744
\endlabellist
\centering
\includegraphics[width=0.37\textwidth]{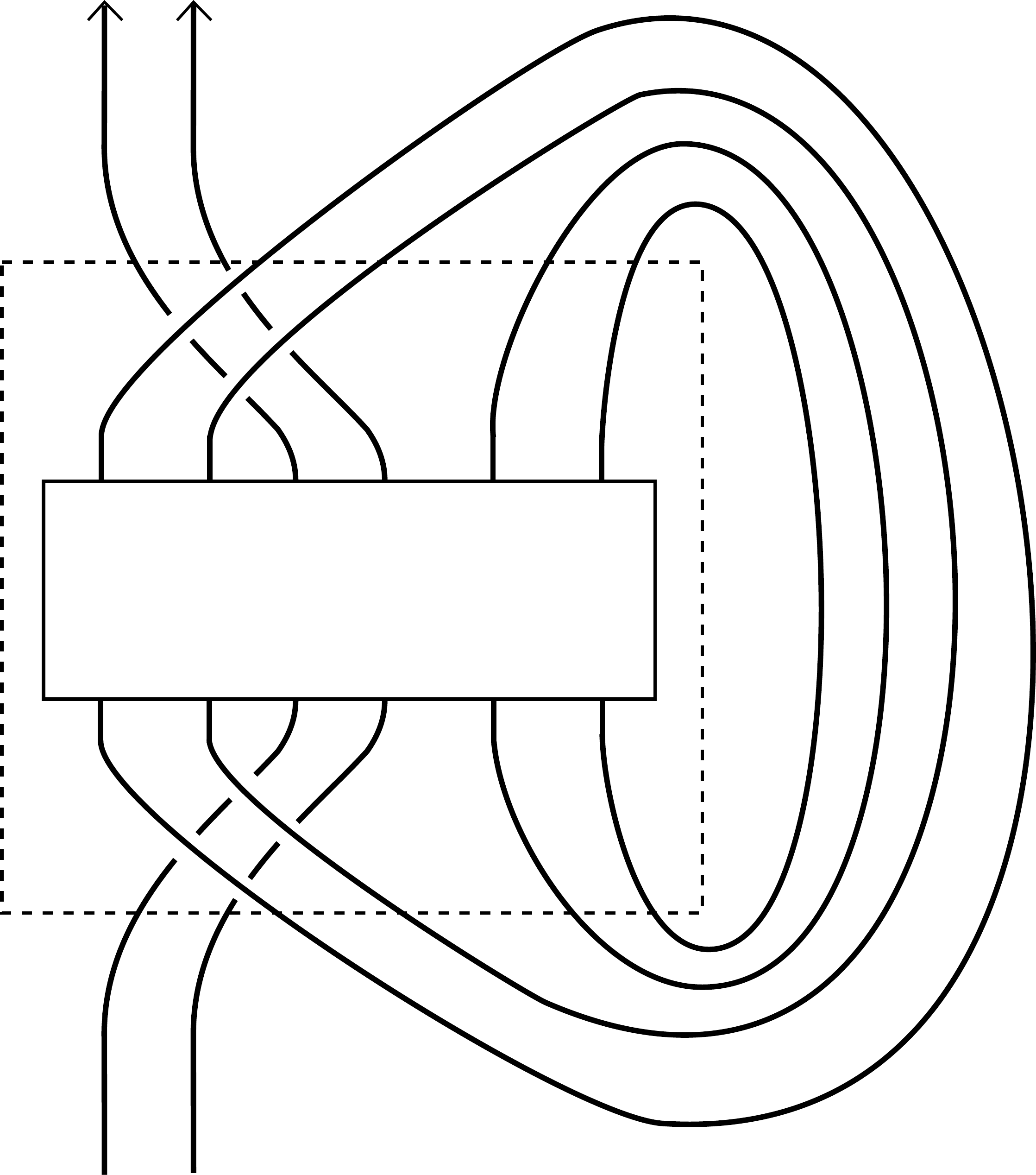}}
\end{equation*}
This concludes the claim.

In order to ensure the well-definedness of the functor $F_X$, we must understand how two braids with isotopic partial closures are related.  We view every braid as an upwards tangle by assigning the 0-framing to each of the strands (in other words, we consider the blackboard framing of any diagram).  If $b \in B_{n+m}$, we will write $\mathrm{cl}_r(m)$ for the tangle obtained by closing the $m$ rightmost strands of $b$, as in \eqref{eq:generic_closure}.

Now, we will define two different transformations of braids. Fix a positive integer $k \geq 0$. Define the \textit{$k$-open, framed Markov moves} as
\begin{enumerate}[leftmargin=3\parindent]
\item[(MI)] Conjugating a braid $b \in B_{k+n}$ by an element of the form $1_k \otimes a$, where $1_k \in B_k$ is the unit, $a \in B_{n}$ and $\otimes$ denotes concatenation of braids:
$$ b \longleftrightarrow   (1_k \otimes a)b(1_k \otimes a^{-1}),$$
\item[(MII)] Replacing $b \in B_n$ by $(\sigma_{n}^{-1} \sigma_{n+1})^{\pm 1}b \in B_{n+2}$, or the inverse of this:
$$ b \longleftrightarrow ( \sigma_{n}^{-1} \sigma_{n+1})^{\pm 1}b .$$
\end{enumerate}

We claim that that if $b \in B_{k+n}$ and $b' \in B_{k+m}$, then the  tangles with $k \geq 0$ open strands  $\mathrm{cl}_{n}(b)$ and  $\mathrm{cl}_{m}(b')$ are isotopic if and only if $b$ and $b'$ are related by a finite sequence of  $k$-open, framed Markov moves. Of course, this is simply a suitable version of Markov theorem \cite{birman} (in the framed case). We can argue making use of the $L$-moves devised by Lambropoulou and Rourke \cite{L,LR}, which are two moves equivalent to the classical Markov moves for links (in the unframed case). In the unframed, $k$-open situation, one can mimic the argument in \cite{LR} to see that the partial closures of two given braids are isotopic if and only if the braids are related by a sequence of $L$-moves where in each of the moves the new component is always closed up. Now the same argument the relates the $L$-moves with the Markov moves in \cite{LR} for the full closure amounts to the move (MI) above and the unframed version of (MII), $b \longleftrightarrow \sigma_{n}^{\pm 1} b $. We simply have to correct the Reidemeister one move to have the framed version in the $k$-open case.

Let us now prove the statement.    According to the claim above, any upwards tangle can be expressed as the partial closure of a braid. Since the functor $F$ must preserve the braiding and the open trace, the uniqueness follows.

Now let us see the existence. Define $F(+^n):= X^{\otimes n}$ on objects. On morphisms, given an upwards tangle $T$ with $k >0$ open components, consider a generic rotational digram $D$ of $T$ of the form $D=\mathrm{cl}_n (b)$ for a braid (diagram) $b \in B_{k+n}$, which is $(k,k,n) $-admissible by construction. Note that $F(b)$ is determined by the condition $F(\, \PC \,)= \tau_{X,X} $ of the statement. Then define $$F(T) := \mathrm{tr}_{X^{\otimes k}, X^{\otimes k}}^{X^{\otimes n}}(Fb).$$
To demonstrate the well-definedness of this, it suffices to see that the definition is invariant under the $k$-open, framed Markov moves stated above.  The invariance under (MI) follows by the naturality of $$\mathrm{tr}_{X,Y}^U : \mathrm{Hom}_\C^{ad} (X \otimes U, Y \otimes U) \to \hom{\C}{X}{Y}$$ in $U$. The invariance under (MII)  is a consequence of axiom (OTC4). Of course, that $Fb$ is well-defined on isotopy classes of braids is due to $\C$ being braided.
\end{proof}

\begin{example}
If $\T^+$ is endowed with the open-traced structure from \cref{ex:T+_open_traced}, then the unique structure-preserving functor $F:\Tup \to \T^+$ such that $F(+)=+$ is precisely the canonical embedding $\Tup \hooklongrightarrow \T^+$.
\end{example}

\begin{example}
Let $n \geq 1$. The unique structure-preserving self-functor $F:\Tup \to \Tup$ such that $F(+)=+^n$  replaces every component of a given upwards tangle by $n$ parallel copies of it.
\end{example}

\begin{construction}\label{constr:open_tr_from_tr}
Let $\C$ be a traced monoidal category, and let $X$ be an object of $\C$ such that all tensor powers of $X$ are different objects. Let us see how we can obtain a monoidal subcategory of $\C$ containing $X$ which inherits an open trace from $\C$.

Consider the composite (that we will denote by $F$) $$ \Tup \hooklongrightarrow \T^+ \to \C  $$ where the first functor is the canonical embedding and the second is the unique strict monoidal functor sending $+$ to $X$ and preserving the traced structure. Let us write $\C_X$ for the \textit{image} of $F$, that is $\C_X$ is the subcategory of $\C$ whose objects are $X^{\otimes n}$, $n \geq 0$, and whose arrows are images $F(T)$ of arbitrary upwards tangles. Note that this category is well-defined because by hypothesis $F$ is injective-on-objects. Moreover $\C_X$ trivially inherits the monoidal product from $\C$ and a balanced structure since $F$ is in particular balancing-preserving.

Furthermore there is a balancing-preserving functor $P: \C_X \to \SS$ given by $P(X^{\otimes n})=n$ and $P(F(T))= \sigma_T$. With respect to this functor, the trace of $\C$ restricts to an open trace in $\C_X$ given by $ \mathrm{tr}_{X^{\otimes n}, X^{\otimes n}}^{X^{\otimes m}}(F(T)) : = F(\mathrm{tr}_{+^n , +^n}^{+^m}(T))$ where $T$ is an $(+^n,+^n, +^m)$-admissible upwards tangle. Of course, the resulting functor $\Tup \to \C_X$ is open-traced.
\end{construction}

If $A$ is a ribbon Hopf algebra over some (non-trivial) commutative ring $\Bbbk$, recall from \cref{subsec:ribbon_algebras} that its representation theory gives rise to a strict ribbon category $\mathsf{fMod}_A^{\mathrm{str}}$ . Recall also that every ribbon category is canonically traced using \eqref{eq:trace_for_ribbon}. If $W$ is a finite-free $A$-module, applying \cref{constr:open_tr_from_tr} to $\mathsf{fMod}_A^{\mathrm{str}}$, we get a open-traced monoidal category $(\mathsf{fMod}_A^{\mathrm{str}})_W$. Please note that we can indeed apply the construction because in the strictification of a category all tensor powers of an object have different lengths and therefore are different objects.  The unique strict monoidal functor $\Tup \to (\mathsf{fMod}_A^{\mathrm{str}})_W$ arising from \cref{thm:UP_opentraced} will be also called the \textit{Reshetikhin-Turaev functor} and will be denoted as
\begin{equation}\label{eq:RT_open_trace}
RT_W: \Tup \to (\mathsf{fMod}_A^{\mathrm{str}})_W 
\end{equation}
The following lemma is immediate and justifies the name choice:

\begin{lemma}\label{lemma:restriction}
In the above setup, the following diagram commutes,
$$
\begin{tikzcd}
\Tup \rar{RT_W} \dar[hook] & (\mathsf{fMod}_A^{\mathrm{str}})_W \dar[hook]  \\
\T \rar{RT_W} & \mathsf{fMod}_A^{\mathrm{str}}
\end{tikzcd}
$$
where the horizontal top functor is \eqref{eq:RT_open_trace}, the horizontal bottom functor arises from \cref{thm:turaev}, and the two vertical functors are the inclusions.
\end{lemma}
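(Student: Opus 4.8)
The plan is to recognise both composites in the square as strict monoidal functors $\Tup \to \mathsf{fMod}_A^{\mathrm{str}}$ coming from the two universal properties in play, and to reduce the commutativity to a single statement about $\T^+$: that the ribbon functor of \cref{thm:turaev} restricts on $\T^+$ to the traced functor of \cref{thm:UP_traced}.

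First I would unwind the two definitions. By \cref{constr:open_tr_from_tr}, the open-traced functor $RT_W : \Tup \to (\mathsf{fMod}_A^{\mathrm{str}})_W$ followed by the inclusion $(\mathsf{fMod}_A^{\mathrm{str}})_W \hooklongrightarrow \mathsf{fMod}_A^{\mathrm{str}}$ is, by construction, the composite $\Tup \hooklongrightarrow \T^+ \to \mathsf{fMod}_A^{\mathrm{str}}$, whose second arrow is the traced functor of \cref{thm:UP_traced} applied to $W$ (the target carrying its canonical ribbon-induced trace). On the other hand, using the monoidal embeddings $\Tup \hooklongrightarrow \T^+ \hooklongrightarrow \T$, the bottom-left composite $\Tup \hooklongrightarrow \T \to \mathsf{fMod}_A^{\mathrm{str}}$ factors as $\Tup \hooklongrightarrow \T^+ \hooklongrightarrow \T \to \mathsf{fMod}_A^{\mathrm{str}}$, the last arrow being the ribbon functor of \cref{thm:turaev}. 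Both composites therefore factor through the common inclusion $\Tup \hooklongrightarrow \T^+$, so it suffices to prove that the restriction to $\T^+$ of the ribbon functor agrees with the traced functor.

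Next I would invoke uniqueness. Both functors $\T^+ \to \mathsf{fMod}_A^{\mathrm{str}}$ are strict monoidal, send $+$ to $W$, and preserve the braiding and twist (the ribbon functor by \cref{thm:turaev}, the traced functor by \cref{thm:UP_traced}). By the uniqueness clause of \cref{thm:UP_traced}, the two coincide as soon as the restriction of the ribbon functor is trace-preserving. To check this I would use that the trace on $\T^+$ is precisely the ribbon-induced trace that $\T^+$ inherits as a traced subcategory of $\T$, namely the composite \eqref{eq:trace_for_ribbon}: graphically $\mathrm{tr}_{+^n,+^n}^{+^m}$ closes up the $m$ rightmost strands with cups and caps, which realise $\mathrm{coev}$ and $\widetilde{\mathrm{ev}}$. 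Since the ribbon functor preserves duals, $\mathrm{coev}$, $\mathrm{ev}$, $\tau$ and $\theta$, it preserves the right evaluation \eqref{eq:right_evaluation}, hence the whole composite \eqref{eq:trace_for_ribbon}; as the target trace on $\mathsf{fMod}_A^{\mathrm{str}}$ is itself the ribbon-induced one, this gives $RT_W(\mathrm{tr}_{X,Y}^U f)=\mathrm{tr}_{RT_W X, RT_W Y}^{RT_W U}(RT_W f)$, i.e. trace preservation.

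The only genuine subtlety — the step I would treat most carefully — is the identification, in the previous paragraph, of the abstract trace on $\T^+$ (coming from its universal property as the free traced category) with the concrete ribbon-induced trace inherited from $\T$. This is part of the Joyal–Street–Verity picture: the graphical trace on $\T^+$, obtained by closing strands off with cups and caps, is by definition the composite \eqref{eq:trace_for_ribbon} computed inside $\T$, so the two traces agree on the nose. Once this is in hand, restricting the resulting equality of functors on $\T^+$ back along $\Tup \hooklongrightarrow \T^+$ yields the commutativity of the square.
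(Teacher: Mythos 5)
Your argument is correct: the paper itself offers no proof (it declares the lemma ``immediate''), and your reduction --- factoring both composites through $\Tup \hooklongrightarrow \T^+$, then invoking the uniqueness clause of \cref{thm:UP_traced} once one checks that the restriction of the ribbon functor is trace-preserving because a ribbon functor preserves $\mathrm{coev}$, $\mathrm{ev}$, $\tau$, $\theta$ and hence the composite \eqref{eq:trace_for_ribbon} --- is exactly the unwinding the author is implicitly relying on. You also correctly isolate the one point worth making explicit, namely that the trace on $\T^+$ is the restriction of the canonical ribbon-induced trace of $\T$, i.e.\ closing up the rightmost strands.
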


\begin{remark}
If $ \C$ is a strict traced monoidal category, and a balancing-preserving functor $P:\C \to \SS$ exists, then the trace of $\C$ induces an open trace $(P, \mathrm{tr})$ on $\C$ by restricting to admissible morphisms. 

The reader should be mindful that, in general, a traced monoidal category might not be open-traced, as the existence of a balancing-preserving monoidal functor $\C \to \SS$  --which is an essential part of the structure-- is not guaranteed in general. That can be the case even when the  object monoid of $\C$ is free in one object, e.g. $\C$ being the  subcategory of the category of topological spaces and continuous maps on the objects $\R^n$, $n \geq 0$, and arrows arrows self-maps $\R^n \to \R^n$, which is balanced with respect to the Cartesian product and trivial twist. There is no sensible way to functorially assign  a permutation to every such self-map preserving the symmetric braiding.
\end{remark}

\section{The universal tangle invariant}\label{sec:universal_inv}

The aim of this section is to produce, in a canonical way, a strict monoidal functor that encodes the so-called \textit{universal invariant} defined by Lawrence \cite{lawrence}. By ``canonical'' we mean that arises from a universal property, just like the Reshetikhin-Turaev invariants arise from \cref{thm:turaev}.

The universal invariant is usually defined from a ribbon Hopf algebra $A$ and it is known to dominate the family of  Reshetikhin-Turaev invariants built out of the ribbon category $\mathsf{fMod}_A$ (hence the name). However, the comultiplication, counit and antipode do not play a role when defining the universal invariant. Therefore we want to consider a version  here that uses the minimal algebraic data needed to build such an invariant.


\subsection{XC-algebras}

Let $\Bbbk$ be a commutative ring with unit, and let $A=(A, \mu,  1)$ be a $\Bbbk$-algebra. An \textit{XC-structure} on $A$ is a choice of
two preferred, invertible elements 
$$ R \in A \otimes A \qquad , \qquad \kappa \in A,  $$
called the \textit{universal $R$-matrix}\index{universal $R$-matrix} and the \textit{balancing element}\index{balancing element}, respectively, satisfying the following axioms:

\begin{enumerate}[leftmargin=4\parindent, itemsep=2mm]
\item[(XC0)] $R^{\pm 1}=(\kappa \otimes \kappa) \cdot R^{\pm 1} \cdot (\kappa^{-1} \otimes \kappa^{-1})$,
\item[(XC1f)] $\mu^{[3]}(R_{31}\cdot  \kappa_2 )=\mu^{[3]}(R_{13}\cdot  \kappa^{-1}_2) $,
\item[(XC2c)] $  1\otimes \kappa^{-1} = (\mu\otimes\mu^{[3]} )(R_{15}\cdot R_{23}^{-1}\cdot \kappa^{-1}_4 )$,
\item[(XC2d)] $\kappa \otimes 1 = (\mu^{[3]}\otimes\mu )(R_{15}^{-1}\cdot R_{34}\cdot \kappa_2 )$,
\item[(XC3)] $R_{12}R_{13}R_{23}=R_{23}R_{13}R_{12}$,
\end{enumerate}
where  we have written $\mu^{[3]}$ for the 3-fold multiplication map and for $1 \leq i,j \leq n$, $i \neq j$ we have put
\begin{equation}\label{eq:R_ij}
 R_{ij}^{\pm 1}:= \begin{cases}  (1^{\otimes i-1} \otimes \id \otimes 1^{\otimes j-i-1} \otimes \id \otimes 1^{n-j})(R^{\pm 1}), & i<j\\
(1^{\otimes j-1} \otimes \id \otimes 1^{\otimes i-j-1} \otimes \id \otimes 1^{n-i})(\mathrm{flip}_{A,A}(R^{\pm 1})), & i>j
\end{cases} 
\end{equation}
and similarly $\kappa^{\pm 1}_i = (1^{\otimes i-1} \otimes \id \otimes  1^{\otimes n-i})(\kappa^{\pm 1})$.

The element $\nu := \mu^{[3]}(R_{13}\cdot  \kappa^{-1}_2)$ from (XC1f) is called the \textit{inverse of the classical ribbon element}\index{inverse of the classical ribbon element}. The triple $(A, R, \kappa)$ consisting of  a $\Bbbk$-algebra and an XC-structure will be called an \textit{XC-algebra}. Originally this notion appeared in \cite{barnatanveenpolytime} under the name of ``snarl algebra''.

\begin{example}
For any $\Bbbk$-algebra $A$, setting $R=1 \otimes 1$ and $\kappa=1$ gives a trivial XC-algebra structure. 
\end{example}

\begin{example}\label{ex:Dilbert}
Let $A=\mathcal{M}_2(\mathbb{C})$ be the algebra of $2 \times 2$ matrices with complex coefficients with the usual matrix multiplication. Then the elements
$$R=\begin{pmatrix}
1 & 0 \\0 & 0
\end{pmatrix}\otimes \begin{pmatrix}
1 & 0 \\0 & 1
\end{pmatrix} +  \begin{pmatrix}
0 & 0 \\0 & 1
\end{pmatrix}\otimes \begin{pmatrix}
-1 & 0 \\0 & 1
\end{pmatrix} +2 \begin{pmatrix}
0 & 1 \\0 & 0
\end{pmatrix} \otimes \begin{pmatrix}
0 & 0 \\1 & 0
\end{pmatrix} $$ 
and $$ \kappa = {\bf i} \begin{pmatrix}
1 & 0\\0 & -1
\end{pmatrix} $$
determine an   XC-algebra structure, where ${\bf i}$ represents the complex imaginary unit. Indeed it is readily verified that $R^{-1}=R$ and $\kappa^{-1}=-\kappa$, and the axioms (XC0)--(XC3) can be checked performing the corresponding matrix multiplications.
\end{example}

\begin{example}
The previous example can be further generalised to a one-parameter family of XC-algebras. Let $A=\mathcal{M}_2(\mathbb{C})$ as before. Then for every choice of $ \lambda \in  \mathbb{C}$,
$$R_\lambda = \begin{pmatrix}
1 & 0 \\0 & 0
\end{pmatrix} \otimes \begin{pmatrix}
1 & 0 \\0 & \lambda
\end{pmatrix} +  \begin{pmatrix}
0 & 0 \\0 & 1
\end{pmatrix}\otimes \begin{pmatrix}
-\lambda^{-1} & 0 \\0 & 1
\end{pmatrix} +2 \begin{pmatrix}
0 & 1 \\0 & 0
\end{pmatrix} \otimes \begin{pmatrix}
0 & 0 \\1 & 0
\end{pmatrix} 
$$
and $\kappa$ as before define an XC-algebra structure on $A$. Again this is  a direct computation which we omit here. In this case, $R_{\lambda}^{-1} = R_{\lambda^{-1}}$.
\end{example}

In general, we have two main sources of examples of XC-algebras. The notation clash with ribbon Hopf algebras is of course not incidental, and these provide the first of them:

\begin{proposition}\label{prop:ribbon->XC}
Every ribbon Hopf algebra $(A,R, \kappa)$ is an XC-algebra.
\end{proposition}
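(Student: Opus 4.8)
The plan is to verify that each XC-axiom (XC0)--(XC3) is a consequence of the ribbon Hopf algebra axioms, working axiom by axiom. The translation dictionary between the two structures is transparent: both share the same $R$-matrix and balancing element $\kappa$, so the task is purely to unwind the XC-axioms, which are stated without reference to $\Delta$, $\varepsilon$ or $S$, into statements provable from (QT1)--(QT3) and (R1)--(R3). The easy axioms come first. Axiom (XC0) asks that $R^{\pm 1}=(\kappa\otimes\kappa)R^{\pm 1}(\kappa^{-1}\otimes\kappa^{-1})$; since (R3) gives $S^2=\kappa\cdot(-)\cdot\kappa^{-1}$ and $(S\otimes S)(R^{\pm 1})=R^{\pm 1}$ by \eqref{eq:(S otimes id)R}, applying $S^2\otimes S^2$ to $R^{\pm1}$ leaves it fixed, which is exactly (XC0). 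Axiom (XC3) is literally the Yang--Baxter equation \eqref{eq:YB}, which holds for any quasi-triangular Hopf algebra.

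Next I would treat (XC1f), which defines the inverse classical ribbon element $\nu=\mu^{[3]}(R_{13}\kappa^{-1}_2)$ and demands $\mu^{[3]}(R_{31}\kappa_2)=\mu^{[3]}(R_{13}\kappa^{-1}_2)$. The plan is to expand both sides using $R=\sum_i\alpha_i\otimes\beta_i$. The right-hand side is $\sum_i\alpha_i\kappa^{-1}\beta_i$, and one recognises this as $\mu^{\mathrm{op}}(\id\otimes(\kappa^{-1}\cdot(-)))$ applied to $R$, i.e. related to the Drinfeld element $u=\sum_i S(\beta_i)\alpha_i$ and its variants; using the identity $(\id\otimes S)(R)=\sum_i\bar\alpha_i\otimes\kappa\bar\beta_i\kappa^{-1}$ recorded in the excerpt, together with $S(\kappa)=\kappa^{-1}$ and (R1), one checks that both expressions compute the same central element (the inverse ribbon element $\kappa^{-1}u$ up to the usual conventions). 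The two remaining axioms (XC2c) and (XC2d) are the most involved: they encode the compatibility of the full rotation with the crossing, i.e. the pivotal/rigidity relations. For (XC2c), $1\otimes\kappa^{-1}=(\mu\otimes\mu^{[3]})(R_{15}R_{23}^{-1}\kappa^{-1}_4)$, I would again expand $R$ and $R^{-1}=\sum_j\bar\alpha_j\otimes\bar\beta_j$ in components, collect the five tensor legs according to the multiplication pattern $(\mu\otimes\mu^{[3]})$, and reduce using $(S\otimes\id)(R)=R^{-1}$, $(\id\otimes S)(R^{-1})=R$ together with the antipode axiom $\mu(\id\otimes S)\Delta=\eta\varepsilon$; the appearance of $\kappa$ converts the bare antipode into $S^2$ at the right place, producing the grouplike $\kappa^{-1}$ on the correct leg. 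Axiom (XC2d) is entirely analogous with the roles of $R$ and $R^{-1}$ and of $\kappa$ and $\kappa^{-1}$ interchanged.

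I expect the main obstacle to be (XC2c) and (XC2d): these are the axioms where the Hopf-algebraic structure (comultiplication, counit, antipode) is genuinely doing the work of turning the abstract ``full rotation'' relation into a computation, so the reduction must carefully insert and cancel antipodes via the defining relation $\sum\alpha_i S(\beta_i)$-type contractions, and keep track of which of the five tensor factors each term lands in after the partial multiplications $\mu$ and $\mu^{[3]}$. The bookkeeping of indices and of the flip in the convention \eqref{eq:R_ij} for $R_{31}$ versus $R_{13}$ is where errors are easy to make, so I would carry out these two in full component notation and verify that the summed expressions collapse to $1\otimes\kappa^{-1}$ and $\kappa\otimes 1$ respectively. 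Once all five axioms are checked, the invertibility of $R$ and $\kappa$ (already part of the ribbon data, with $\kappa^{-1}=S(\kappa)$) completes the verification that $(A,R,\kappa)$ is an XC-algebra.
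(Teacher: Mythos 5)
Your proposal follows essentially the same route as the paper: (XC0) via $S^2\otimes S^2$ fixing $R^{\pm1}$, (XC3) as Yang--Baxter, (XC1f) by identifying both sides with the same central element, and (XC2c)/(XC2d) by component expansion reduced through $(S\otimes\id)(R)=R^{-1}$ and $(\id\otimes S)(R^{-1})=R$ (the cancellation ultimately being $R^{-1}R=1\otimes1$ rather than the antipode axiom). The only slip worth noting is in (XC1f): the common value of the two sides is $\nu=v^{-1}=u^{-1}\kappa$ (not $\kappa^{-1}u=v$), and the clean way to see the equality is $S(v^{-1})=v^{-1}$ combined with $(S\otimes S)(R)=R$ and $S(\kappa)=\kappa^{-1}$.
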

\begin{proof}
For (XC0) we have
$$ R= (S^2 \otimes S^2)(R)= (\kappa \otimes \kappa)\cdot R \cdot(\kappa^{-1}\otimes \kappa^{-1})$$ using \eqref{eq:(S otimes id)R} and similarly
$$R^{-1}= (S^2 \otimes S^2)(R^{-1})= (\kappa \otimes \kappa)\cdot R^{-1} \cdot(\kappa^{-1}\otimes \kappa^{-1}).$$ 
For (XC1f), first we note that
$$ v^{-1}= u^{-1}\kappa = \sum_i \beta_i S^2(\alpha_i) \kappa = \sum_i \beta \kappa \alpha_i. $$ Since $S(v)=v$ by \eqref{eq:ribbon_elmt_axioms}, then $S(v^{-1})=v^{-1}$ and therefore 
\begin{align*}
\mu^{[3]}(R_{31}\cdot  \kappa_2 ) &= \sum_i \beta_i \kappa \alpha_i = S \left( \sum_i \beta_i \kappa \alpha_i \right) = \sum_i S(\alpha_i) S(\kappa) S(\beta_i) \\ & =\sum_i \alpha_i \kappa^{-1} \beta_i = \mu^{[3]}(R_{13}\cdot  \kappa^{-1}_2)
\end{align*}
To obtain (XC2c) we compute
\begin{align*}
(\mu\otimes\mu^{[3]} )(R_{15}\cdot R_{23}^{-1}\cdot \kappa^{-1}_4 )&= \sum_{i,j}  \alpha_i \bar{\alpha}_j  \otimes \bar{\beta}_j \kappa^{-1} \beta_i \\
&= (1\otimes \kappa^{-1} ) \cdot\left( \sum_{i,j} \alpha_i \bar{\alpha}_j  \otimes S^2(\bar{\beta}_j)  \beta_i \right)\\
&= (1\otimes \kappa^{-1} ) \cdot\left(  \sum_{i,j} \alpha_i \bar{\alpha}_j  \otimes S(S^{-1}(\beta_i )\cdot S(\bar{\beta}_j)  )\right) \\
&= (1\otimes \kappa^{-1} ) \cdot (\id \otimes S) \left(  \sum_{i,j} \alpha_i \bar{\alpha}_j  \otimes S^{-1}(\beta_i )\cdot S(\bar{\beta}_j)  \right) \\
&= (1\otimes \kappa^{-1} ) \cdot(\id  \otimes S ) \big( (\id \otimes S^{-1}  )(R) \cdot ( \id \otimes S )(R^{-1}) \big)\\
&=(1\otimes \kappa^{-1} ) \cdot (\id  \otimes S )(R^{-1}\cdot R)\\
&=(1\otimes \kappa^{-1} ) \cdot (\id  \otimes S )(1 \otimes 1)\\
&= (1\otimes \kappa^{-1} ) .
\end{align*}
(XC2d) follows from a similar computation:
\begin{align*}
(\mu^{[3]}\otimes\mu )(R_{15}^{-1}\cdot R_{34}\cdot \kappa_2 )&= \sum_{i,j}  \bar{\alpha}_j \kappa \alpha_i\otimes \beta_i \bar{\beta}_j\\
&= \left( \sum_{i,j} \bar{\alpha}_j S^2(\alpha_i)\otimes \beta_i \bar{\beta}_j \right) \cdot(\kappa \otimes 1) \\
&= \left(  \sum_{i,j} S(S(\alpha_i) S^{-1}(\bar{\alpha}_j )) \otimes \beta_i \bar{\beta}_j  \right)(\kappa \otimes 1)\cdot \\
&= (S \otimes\id  ) \left(  \sum_{i,j} S(\alpha_i) S^{-1}(\bar{\alpha}_j ) \otimes \beta_i \bar{\beta}_j  \right) \cdot(\kappa \otimes 1)\\
&= (S \otimes\id  ) \big( (S\otimes \id  )(R) \cdot (S^{-1}\otimes \id )(R^{-1}) \big)\cdot(\kappa \otimes 1) \\
&= (S \otimes\id )(R^{-1}\cdot R)\cdot(\kappa \otimes 1) \\
&= (S\otimes\id   )(1 \otimes 1)\cdot(\kappa \otimes 1) \\
&= (\kappa \otimes 1) .
\end{align*}
Lastly,  (XC3) is precisely the Yang-Baxter equation \eqref{eq:YB}. 
\end{proof}

\begin{remark}
According to the proof, when $(A, R, \kappa)$ is ribbon, then $\nu = v^{-1}$, which justifies our name ``inverse of the classical ribbon element'' for $\nu$.
\end{remark}

\begin{example}\label{ex:Sweedler}
Let $\Bbbk$ be a commutative ring with unit in which 2 is invertible. The \textit{Sweedler algebra} \index{Sweedler algebra} is the $\Bbbk$-algebra $SW$ given by the following presentation: $$SW := \frac{\Bbbk \langle s, w \rangle}{(s^2=1 , w^2=0, sw = -ws)}.$$ It is well-known  (see e.g. \cite{majid_foundations}) that the Sweedler algebra is a Hopf algebra with structure maps determined by the condition that $$ \Delta (w)=w \otimes 1 + s \otimes w \qquad , \qquad \varepsilon(w)=0 \qquad , \qquad S(w)=-sw  $$ and $s$ being grouplike. Furthermore,  $SW$ admits a ribbon structure (and therefore an XC-algebra structure) given by $$R:=1 \otimes 1  - 2 p \otimes p + w \otimes w + 2 wp \otimes wp - 2 w \otimes wp  \qquad , \qquad \kappa := s,$$ where for convenience we have written $p := (1-s)/2$. Note  that this ribbon structure is triangular, $R^{-1}= R_{21}$.
\end{example}

\begin{remark}
We remind the reader that a rich source of examples of ribbon Hopf algebras arises from the \textit{Drinfeld double} construction \cite{drinfeld_original}: if $A$ is a finite-free Hopf algebra, then there exists a unique quasi-triangular Hopf algebra structure on $D(A):= A \otimes A^{*}$ such that
\begin{enumerate}
\item The canonical map $A \otimes A^{*} \to D(A), \ x \otimes y \mapsto xy$ is a $\Bbbk$-module isomorphism,
\item The canonical embeddings $A \hooklongrightarrow D(A)$ and $A^{*,\mathrm{cop}} \hooklongrightarrow D(A)$ are Hopf algebra homomorphisms,
\item The pair $(D(A),R)$, where   $R \in A \otimes A^* \subset D(A) \otimes D(A)$ is the canonical element that corresponds with the identity of $A$, is a quasi-triangular Hopf algebra.
\end{enumerate}
The Drinfeld double $D(A)$ may or may not contain a balancing element $\kappa$, but even if it does not, it can always be formally adjoint  as follows \cite{RT,kassel}: if $(H,R)$ is a quasi-triangular Hopf algebra, then there exists a unique Hopf algebra structure on $H(\kappa):= H \oplus H \kappa$ such that
\begin{enumerate}
\item The canonical embedding $H  \hooklongrightarrow H(\kappa)$ is a Hopf algebra homomorphism,
\item The triple $( H(\kappa), R, \kappa)$ is a ribbon Hopf algebra,
\end{enumerate}
where we view $R \in H(\kappa) \otimes H(\kappa)$ via the canonical embedding $H \otimes H \hooklongrightarrow H(\kappa) \otimes H(\kappa)$.

The upshot is that if $A$ is a finite-free, rank $n$ Hopf algebra, then one can construct  a rank $2 n^2$ ribbon Hopf algebra $\mathcal{D}(A):= D(A)(\kappa)$.
\end{remark}

\begin{example}\label{ex:double_Sweedler}
We would like to give a concise example of the previous discussion. As before, let $\Bbbk$ be a commutative ring with unit where 2 is invertible, and consider the Sweedler algebra $SW$ from \cref{ex:Sweedler}. Then the enlarged Drinfeld double  $\mathcal{D}(SW)$ can be described explicitly as follows: $\mathcal{D}(SW)$ is the quotient of the free $\Bbbk$-algebra $ \Bbbk \langle s,w , \sigma , \omega , c \rangle$ modulo the two-sided ideal generated by the following relations:
\begin{itemize}
\item $s^2 =1 = \sigma^2 \quad , \quad w^2=0=\omega^2 \quad , \quad c^2=s \sigma \quad , \quad w \omega - \omega w = s - \sigma$,
\item $xy=yx$ whenever $x,y=c,s, \sigma$,
\item $xy= -yx$ whenever $x= c,s, \sigma$ and $y=w, \omega$.
\end{itemize}
Note that this is indeed a rank 32 algebra. The universal $R$-matrix and the balancing element are given by 
$$R:= \tfrac{1}{2}( 1 \otimes (1+\sigma) + s\otimes (1-\sigma) + w \otimes (\omega + \sigma \omega) + sw \otimes ( \omega - \sigma \omega)) \quad , \quad \kappa := c.$$ The rest of Hopf algebra structure maps are determined by those of $SW$, because the Sweedler algebra is self-dual.
\end{example}

\subsection{Traced XC-algebras}

Let $A$ be a $\Bbbk$-algebra, and let $[A,A]$ denote its commutator submodule, that is, the submodule of $A$ spanned by all commutators $[a,b]=ab-ba$ for $a,b \in A$. Then a \textit{trace} \index{trace} for $A$ is a $\Bbbk$-module homomorphism $$\mathrm{tr}: A \to \Bbbk $$ that vanishes on $[A,A]$, that is, that satisfies $\mathrm{tr}(ab)=\mathrm{tr}(ba)$ for all $a,b \in A$. It is clear that traces on $A$ are in bijection with $\Bbbk$-module homomorphisms $A/[A,A] \to \Bbbk$. A $\Bbbk$-algebra together with a fixed choice of trace is called a \textit{traced algebra}\index{traced algebra}. A \textit{traced XC-algebra} \index{traced XC-algebra} is an algebra endowed with both a trace and an XC-structure.

\begin{example}
Let $SW$ be the Sweedler algebra from \cref{ex:Sweedler}. Define $\mathrm{tr}: SW \to \Bbbk$ by $$ \mathrm{tr}(s)=1 \qquad , \qquad \mathrm{tr}(w)=0 = \mathrm{tr}(sw). $$ This map defines a trace and makes $SW$ into a traced XC-algebra.
\end{example}

\begin{example}
Let $A= \mathcal{M}_2(\mathbb{C})$ be the XC-algebra from \cref{ex:Dilbert}. Then the usual trace of a matrix, that is, the sum of its diagonal elements, is a trace, turning $A$ into a traced XC-algebra.
\end{example}

The previous example motivates the following definition: a $\Bbbk$-algebra $A$ is said to be an \textit{endomorphism algebra}  if $A= \mathrm{End}_\Bbbk (V)$ is the $\Bbbk$-algebra of endormorphisms of some finite-free $\Bbbk$-module $V$. We will always assume that $\mathrm{End}_\Bbbk (V)$ carries the usual algebra structure, that is, $f  \cdot g := f \circ g$ gives the product and  $\id_V$ is the unit element. Obviously, the XC-algebra from \cref{ex:Dilbert} is an endomorphism algebra under the identification $\mathcal{M}_2(\mathbb{C}) \cong \mathrm{End}_{\mathbb{C}}(\mathbb{C}^2)$.

Let us now pass to discuss the second class of examples of XC-algebras, which come from representation theory. Recall that, given a $\Bbbk$-algebra $A$, an \textit{$A$-module} $V$ can be viewed as a $\Bbbk$-algebra homomorphism $$\rho: A \to \mathrm{End}_\Bbbk(V).$$

The following observation is immediate and  provides a large family of examples.

\begin{lemma}\label{lemma:endom_alg_tr_XC}
Let $A$ be an XC-algebra with universal $R$-matrix $R$ and balancing element $\kappa$. Then every $A$-module $(V, \rho)$ gives rise to a traced XC-algebra structure on $\mathrm{End}_\Bbbk(V)$ with universal $R$-matrix and balancing element given by $(\rho \otimes \rho)(R)$ and $\rho(\kappa)$, respectively.
\end{lemma}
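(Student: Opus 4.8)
The plan is to exploit the fact that a module structure $\rho \colon A \to \eend{\Bbbk}{V}$ is, by definition, a \emph{unital} $\Bbbk$-algebra homomorphism, together with the observation that each of the axioms (XC0)--(XC3) is an identity built solely from the algebra structure of $A$: the multiplications $\mu$ and $\mu^{[3]}$, the unit, the flip, tensor products, and the placement operators $(-)_{ij}$ and $(-)_i$ of \eqref{eq:R_ij}. Since all of these structural operations are respected by any unital algebra homomorphism, the entire XC-structure should transport verbatim along $\rho$. Writing $R' := (\rho \otimes \rho)(R)$ and $\kappa' := \rho(\kappa)$, I would show that $(R', \kappa')$ satisfies the same five axioms inside $\eend{\Bbbk}{V}$, and then supply the trace separately.

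First I would record that each tensor power $\rho^{\otimes n} \colon A^{\otimes n} \to \eend{\Bbbk}{V}^{\otimes n}$ is again a unital algebra homomorphism and that it intertwines the maps appearing in the axioms: multiplicativity of $\rho$ gives $\rho \circ \mu = \mu \circ (\rho \otimes \rho)$ and likewise for $\mu^{[3]}$; naturality of the flip gives $(\rho \otimes \rho) \circ \mathrm{flip}_{A,A} = \mathrm{flip}_{\eend{\Bbbk}{V},\eend{\Bbbk}{V}} \circ (\rho \otimes \rho)$; and because $\rho^{\otimes n}$ acts factorwise while sending $1_A \mapsto \id_V$, it commutes with the placement operators, so that $\rho^{\otimes n}(R_{ij}^{\pm 1}) = (R')_{ij}^{\pm 1}$ and $\rho^{\otimes n}(\kappa_i^{\pm 1}) = (\kappa')_i^{\pm 1}$. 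With these compatibilities in hand, applying the appropriate $\rho^{\otimes n}$ (post-composed with the relevant multiplication map) to each axiom for $(R,\kappa)$ yields the corresponding axiom for $(R',\kappa')$. For instance, applying $\rho$ to (XC1f) and pulling it through $\mu^{[3]}$ and the placement operators gives $\mu^{[3]}((R')_{31} \cdot \kappa'_2) = \mu^{[3]}((R')_{13} \cdot (\kappa')^{-1}_2)$; the verifications of (XC0), (XC2c), (XC2d) and (XC3) are identical in shape. Invertibility is automatic: $R'$ has inverse $(\rho\otimes\rho)(R^{-1})$ and $\kappa'$ has inverse $\rho(\kappa^{-1})$, since $\rho \otimes \rho$ and $\rho$ preserve both products and units.

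For the trace I would invoke that $V$ is finite-free, so $\eend{\Bbbk}{V}$ carries the canonical (matrix, equivalently evaluation) trace $\mathrm{tr} \colon \eend{\Bbbk}{V} \to \Bbbk$, which is $\Bbbk$-linear and satisfies $\mathrm{tr}(fg) = \mathrm{tr}(gf)$; this equips $\eend{\Bbbk}{V}$ with a trace in the required sense and hence, combined with the transported XC-structure, makes it a traced XC-algebra. I do not expect any genuine obstacle, as the content is purely formal functoriality. The only point demanding a little care is the bookkeeping for the placement operators: one must check that $\rho^{\otimes n}$ really sends $R_{ij}$, defined in \eqref{eq:R_ij} using the flip and padding by copies of $1$, to $(R')_{ij}$, which is precisely where unitality of $\rho$ (sending the padding units to $\id_V$) is used. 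Once that single compatibility is confirmed, every axiom follows by a one-line application of an algebra homomorphism.
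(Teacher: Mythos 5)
Your argument is correct and is exactly the reasoning the paper leaves implicit: the paper states this lemma with no proof, calling it an ``immediate'' observation, and your write-up simply fleshes out why --- the axioms (XC0)--(XC3) are equations built only from the algebra structure, units, flips and placement operators, all of which are transported by the unital algebra homomorphism $\rho$ and its tensor powers, while the trace on $\eend{\Bbbk}{V}$ is the canonical one on endomorphisms of a finite-free module. No gaps; the one point you flag (compatibility of $\rho^{\otimes n}$ with the padding by units in \eqref{eq:R_ij}) is indeed the only detail worth recording.
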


This implies that finite-free representations of ribbon Hopf algebras give rise to traced XC-algebras, in particular the representations of the Drinfeld-Jimbo quantisations $U_h(\mathfrak{g})$ of complex semisimple Lie algebras.

\subsection{The category of elements \texorpdfstring{$\mathcal{E}(A)$}{E(A)}}

We will now integrate XC-algebras in our monoidal categorical framework. Before that let us introduce some useful notation: if $\sigma \in \SS_n$ and $V$ is a $\Bbbk$-module, we write $\sigma_* : V^{\otimes n} \to V^{\otimes n}$ for the linear map that permutes the factors of $V^{\otimes n}$ taking the $i$-th factor to the $\sigma(i)$-th factor, explicitly $$ \sigma_* : V^{\otimes n} \to V^{\otimes n} \qquad , \qquad \sigma_* (x_1 \otimes \cdots \otimes x_n):= x_{\sigma^{-1}(1)} \otimes \cdots \otimes x_{\sigma^{-1}(n)}. $$

\begin{construction}\label{const:I_n}
Let $(A,R, \kappa)$ be an XC-algebra over a commutative ring $\Bbbk$, and let $n>0$ be an integer. Let us consider the cartesian product $A^{\otimes n} \times \mathfrak{S}_n$ of the underlying sets of the $n$-fold tensor power of $A$ and the symmetric group of $n$ elements. We define over $A^{\otimes n} \times \mathfrak{S}_n$ a monoid structure as follows: its multiplication law is given by 
\begin{equation}\label{eq:composition_E(A)}
(u, \sigma) \cdot (v,\tau):= (\tau^{-1}_*(u) \cdot v \ , \  \sigma \tau) .
\end{equation}
The associativity of the composition follows from the fact that $\tau_*$ is an algebra map and that $(  \sigma\tau)_* = \sigma_* \circ \tau_*$. The unit element of this monoid is given by the pair $\bm{1}:= (1 \otimes  \cdots \otimes 1, \id).$

For $ 1 \leq i <n$, let us write $$\hat{R}_i := (  1 \otimes \cdots \otimes  R  \otimes \cdots \otimes 1 , s_i  ) \in A^{\otimes n} \times \mathfrak{S}_n.$$ Note that this element is invertible in $A^{\otimes n} \times \mathfrak{S}_n$ with inverse $$\hat{R}_i^{-1} := (  1 \otimes \cdots \otimes  R^{-1}_{21}  \otimes \cdots \otimes 1 , s_i ) .$$  We define $\mathcal{J}_n$ as the submonoid of $A^{\otimes n} \times \mathfrak{S}_n$ generated by the elements $\bm{1}$ and $\hat{R}_i^{\pm 1}$ for $ 1 \leq i <n$.

Now, for every $m \geq 0$, let $\mathcal{F}_{n,m} \subset A^{\otimes n+m} \times  \SS_{n+m}$ (resp. 
$\mathcal{J}_{n,m} \subset \mathcal{J}_{n+m}$) be the subset of pairs $(u, \sigma) \in A^{\otimes n+m} \times  \SS_{n+m}$ (resp.   $(u, \sigma) \in \mathcal{J}_{n+m}$) such that $\sigma$ contains no cycles of length $\leq m$ including only elements of the set $\{ n+1 , \ldots , n+m  \}$. We define set-theoretical maps 
\begin{equation}\label{eq:phi_nm}
 \varphi_{n,m} :  \mathcal{F}_{n,m} \to  A^{\otimes n} \times \mathfrak{S}_n
\end{equation}
inductively as follows: first set $\varphi_{n,0}$ to be the natural inclusion. Now  if $$(u, \sigma)= \left( \sum x_1 \otimes \cdots \otimes x_{n+1}, \sigma \right) \in  \mathcal{F}_{n,1},$$ note that $\sigma (n+1) \neq n+1$. Then  define
\begin{align*}
\varphi_{n,1} &(u, \sigma) := \\
&\left(\sum x_1 \otimes  \cdots \otimes x_{\sigma^{-1}(n+1)-1} \otimes x_{n+1}\kappa x_{\sigma^{-1} (n+1)}  \otimes x_{\sigma^{-1}(n+1)+1} \otimes \cdots \otimes x_{n}, \widehat{\sigma}\right), 
\end{align*}
where for $j=1, \ldots , n$, $$\widehat{\sigma}(j) = \begin{cases} \sigma(n+1), & j= \sigma^{-1}(n+1), \\ \sigma (j), & \mathrm{else}  \end{cases}.$$
For $m >1$, let $\varphi_{n,m} := \varphi_{n,m-1} \circ \varphi_{n+m-1,1}$. Note that this composite is well-defined, that is, that the image of $ \varphi_{n+m-1,1}$ lies in $\mathcal{F}_{n,m-1}$. Define $ \varphi'_{n,m}$ as the restriction of $ \varphi_{n,m}$ to $\mathcal{J}_{n,m} $,
\begin{equation*}
 \varphi'_{n,m}:= (\varphi_{n,m})_{|\mathcal{J}_{n,m} } :  \mathcal{J}_{n,m} \to  A^{\otimes n} \times \mathfrak{S}_n.
\end{equation*}

Finally, let $\mathcal{I}_{n}$ be the smallest submonoid of $A^{\otimes n} \times \mathfrak{S}_n$ that contains the subsets $\mathrm{Im}(\varphi'_{n,m})$ for $m \geq 0$.  For $n=0$, we set $\mathcal{I}_{0}:= \Bbbk$ with its multiplicative monoid structure.
\end{construction}

\begin{construction}\label{constr:E(A)}
Given an  XC-algebra $A$, we will now construct an open-traced  monoidal category $\mathcal{E}(A)$. Consider the family $(\mathcal{I}_n)_{n}$ of monoids defined in the previous construction. Now consider the  monoid homomorphisms 
$$\rho_{n,m} : \mathcal{I}_n \times \mathcal{I}_m \to \mathcal{I}_{n+m} \quad, \quad \rho_{n,m}((u, \sigma), (v, \xi)) := (u \otimes v, \sigma \otimes \xi),$$
where if $\sigma \in \mathfrak{S}_n$ and $\xi \in \mathfrak{S}_m$ then $\sigma \otimes  \xi \in \mathfrak{S}_{n+m}$ denotes the block permutation. If $m=0$, then this formula is meant to be $\rho_{n,0}((u,\sigma), \lambda):= (\lambda u, \sigma)$ and similarly for $\rho_{0,n}$. Also,  $\rho_{0,0}$ is just the multiplication law of the base ring $\Bbbk$.

It is readily seen that these maps satisfy the equalities \eqref{eq:M_n1} and \eqref{eq:M_n2}, so by  \cref{subsec:constr_M_n} we obtain a strict monoidal category that we denote by $\mathcal{E}(A)$ and call the  \textit{category of elements}  of the XC-algebra $A$.

We can further endow $\mathcal{E}(A)$ with a balancing structure. For the braiding, define $\tau_{0,0} := 1 \in \Bbbk$, $\tau_{1,0}=\tau_{0,1}:=(1, \id)$, and $  \tau_{1,1} :=(R, (12)) \in \eend{\mathcal{E}(A)}{2}$. For $n,m > 1$, the definition of $\tau_{n,m}$  is forced by the axioms of a braided category -- so these can be defined inductively. For the twist, let $\theta_0 := 1$ and let $\theta_1 := (\nu , \id)$, where $\nu$ is the inverse of the classical ribbon element. For $n>1$, the definition of $\theta_n$ is forced by the axioms of the twist -- so these are also defined inductively.

Now we will endow $ \mathcal{E}(A)$ with an open trace. To begin with, consider the canonical functor $P: \mathcal{E}(A) \to \mathfrak{S}$ which is the identity on objects and on arrows simply projects the permutation. Clearly this is a strict monoidal functor respecting the balancing structure. Now, if $f :n +m \to n+m$ is $(n,n,m)$-admissible, then define 
\begin{equation}\label{eq:def_trace}
\mathrm{tr}_{n,n}^m(f) := \varphi_{n,m}(f) 
\end{equation}
where the maps $\varphi_{n,m}$ are those defined in \eqref{eq:phi_nm}.
 \end{construction}

\begin{theorem}\label{thm:E(A)_opentraced}
For any XC-algebra $A$, its category of elements $\mathcal{E}(A)$ is indeed a strict  open-traced monoidal category.
\end{theorem}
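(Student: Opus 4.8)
The goal is to verify that $\mathcal{E}(A)$, as constructed, satisfies all the axioms of a strict open-traced monoidal category: that the braiding and twist (defined on generators and extended by the categorical axioms) are well-defined and natural, and that the pair $(P,\mathrm{tr})$ given by $\mathrm{tr}_{n,n}^m(f) := \varphi_{n,m}(f)$ satisfies (OTC1)--(OTC4). My strategy is to \emph{transport} the known open-traced structure on $\Tup$ (\cref{ex:Tup_open_traced}) along the universal invariant, rather than to check each axiom by brute-force tensor computation. Concretely, I would first establish that the assignment $T \mapsto (\mathfrak{Z}_A(T), \sigma_T)$ defines a strict monoidal functor $Z_A : \Tup \to \mathcal{E}(A)$ on generators by sending the elementary pieces of \eqref{eq:crossings_and_spinners} to $\hat R_1^{\pm 1}$ and the full rotations to $(\kappa^{\pm 1},\id)$, and checking that the images respect the rotational Reidemeister moves of \eqref{eq:R0}--\eqref{eq:R2c_and_R3}. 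This is exactly where the XC-axioms earn their keep: (XC3) gives invariance under $(\Omega 3)$, (XC0) under the sliding moves, and (XC1f), (XC2c), (XC2d) encode invariance under the rotation moves $(\Omega 1f)$, $(\Omega 2c)$, $(\Omega 2d)$ respectively (the labels are chosen to match). Surjectivity of this functor onto the generating data of $\mathcal{E}(A)$ then identifies $\mathcal{I}_n$ with the image $\{(\mathfrak{Z}_A(T),\sigma_T)\}$, which is the content of \eqref{eq:set-theor_1}.

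\textbf{Carrying out the axioms.} Granting that $Z_A$ is a well-defined strict monoidal functor that is surjective on each hom-set, the balanced and traced axioms for $\mathcal{E}(A)$ follow by pushing forward those of $\Tup$. For naturality of the braiding $\tau_{n,m}$ and the twist $\theta_n$: every endomorphism of $\mathcal{E}(A)$ is $Z_A(T)$ for some upwards tangle $T$, and the braiding/twist naturality squares in $\mathcal{E}(A)$ are the $Z_A$-images of the corresponding squares in $\Tup$, which hold because $\Tup$ is balanced. The hexagon and twist identities for $\tau_{1,1}=(R,(12))$ and $\theta_1=(\nu,\id)$ follow the same way from the images of \eqref{eq:full_rotations}. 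For the open trace, the key point is that $\mathrm{tr}_{n,n}^m$ was \emph{defined} via $\varphi_{n,m}$, and $\varphi_{n,m}\circ Z_A = Z_A \circ \mathrm{tr}_{+^n,+^n}^{+^m}$ by the closure construction of \cref{const:I_n}; so (OTC1)--(OTC4) in $\mathcal{E}(A)$ are the $Z_A$-images of the open-trace axioms already verified for $\Tup$ in \cref{ex:Tup_open_traced}. In particular (OTC1) reduces to $\varphi_{n,0}=\mathrm{id}$, (OTC2) to the inductive definition $\varphi_{n,m}=\varphi_{n,m-1}\circ\varphi_{n+m-1,1}$, and (OTC4) to the computation $\varphi_{1,1}(\hat R_1)=(\nu,\id)=\theta_1$, which unwinds to $\nu = \mu^{[3]}(R_{13}\cdot\kappa_2^{-1})$, precisely axiom (XC1f).

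\textbf{Main obstacle.} The genuinely delicate step is the \emph{well-definedness} of the trace map as landing in $\mathcal{I}_n$, together with ensuring that all the inductive definitions are consistent and independent of representative. Two sub-issues must be handled with care. First, one must check that $\varphi_{n,m}$ restricted to admissible elements indeed has image in $A^{\otimes n}\times\SS_n$ (no ``residual closed cycles'' remain) --- this is exactly why admissibility forbids cycles of length $\leq m$ inside $\{n+1,\dots,n+m\}$, and it must be verified that the one-step closure $\varphi_{n+m-1,1}$ preserves admissibility so the composite in (OTC2) is defined. Second, and more subtly, I would need to confirm that $\varphi_{n,m}(f)$ depends only on the pair $(u,\sigma)=f$ and not on any choice of how $f$ is built from the generators $\hat R_i^{\pm 1}$; the cleanest route is to deduce this from the functoriality of $Z_A$ and the Markov-type presentation used in the proof of \cref{thm:UP_opentraced}, since any two braid words with isotopic partial closures are related by the $k$-open framed Markov moves, whose invariance is again (OTC4)/(XC1f). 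I expect essentially no difficulty in the balanced axioms once $Z_A$ is in hand; the real work is the bookkeeping that the closure operation $\varphi$ is well-defined and naturally compatible with tensor products, which is the translation of the hardest part of the proof that $\Tup$ is open-traced into the algebraic setting.
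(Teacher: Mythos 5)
Your proposal is correct and follows essentially the same route as the paper: the paper's proof also rests on the observation that the axioms (XC0)--(XC3) are exactly the images of the rotational Reidemeister moves under the bead-placement map, so that $T \mapsto (\mathfrak{Z}_A(T),\sigma_T)$ is an isotopy invariant, $\mathcal{I}_n$ is identified with $\{(\mathfrak{Z}_A(T),\sigma_T)\}$, and every categorical axiom for $\mathcal{E}(A)$ (naturality of braiding and twist, well-definedness of $\varphi_{n,m}$ landing in $\mathcal{I}_n$, (OTC3) and the naturality of the trace) is verified by exhibiting the corresponding tangle isotopy and applying $\mathfrak{Z}_A$. The only cosmetic difference is that the paper phrases this as a case-by-case diagrammatic check rather than as transport of structure along a full monoidal functor, and it supplies the explicit isotopies for the step you correctly single out as the main obstacle.
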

\begin{proof}
The category $\mathcal{E}(A)$ is strict monoidal by \cref{subsec:constr_M_n}. In order to show that it is open traced, let us make the following observation: we can regard the axioms (XC0) -- (XC3) for an XC-algebra as the algebraic analogues of the rotational  Reidemeister moves \eqref{eq:R0} -- \eqref{eq:R2c_and_R3} for rotational diagrams. More precisely, suppose that $A$ has unit $1 \in A$, universal $R$-matrix $R= \sum_i \alpha_i \otimes \beta_i$, inverse $R^{-1} = \sum_i \bar{\alpha}_i \otimes \bar{\beta}_i$ and balancing element $\kappa \in A$. Place beads representing the elements $1, R, R^{-1}, \kappa^{-1}$ and $\kappa$ in the building blocks $I, X, X^-, C$ and $C^-$, respectively, as depicted below,

\begin{equation}\label{eq:beads}
\centre{
\labellist \small \hair 2pt
\pinlabel{$ \color{violet} \bullet$} at 16 30
\pinlabel{$ \color{violet} 1$} at -26 30
\pinlabel{$ \color{violet} \bullet$} at 294 30
\pinlabel{$ \color{violet} \bullet$} at 406 30
\pinlabel{$ \color{violet} \alpha_i$} [r] at 279 30
\pinlabel{$ \color{violet} \beta_i$} [l] at 421 30
\pinlabel{$ \color{violet} \bullet$} at 711 30
\pinlabel{$ \color{violet} \bullet$} at 825 30
\pinlabel{$ \color{violet} \bar{\beta}_i$} [r] at 696 30
\pinlabel{$ \color{violet} \bar{\alpha}_i$} [l] at 850 30
\pinlabel{$ \color{violet} \bullet$} at 1083 92
\pinlabel{$ \color{violet} \kappa^{-1}$} [r] at 1090 120
\pinlabel{$ \color{violet} \bullet$} at 1599 92
\pinlabel{$ \color{violet} \kappa$} [l] at 1614 104
\endlabellist
\centering
\includegraphics[width=0.5\textwidth]{figures/building_blockss}}
\end{equation}
\vspace{0.01cm}

\noindent putting the  ``alpha'' always in the overstrand and the ``beta'' in the understrand. If $D$ is a rotational diagram of an upwards tangle (with ordered components), for every $1 \leq i \leq n$ let $\mathfrak{Z}_A(D)_{(i)}$ be the (formal) word given by writing from right  to left the labels of the beads in the $i$-th component according to the  orientation of the strand. Then put
\begin{equation}\label{eq:def_mathfrak_Z}
\mathfrak{Z}_A(D) = \sum \mathfrak{Z}_A(D)_{(1)} \otimes \cdots \otimes \mathfrak{Z}_A(D)_{(n)} \in A^{\otimes n}
\end{equation}
where the summation runs through all subindices in $R^{\pm 1}$ (one for each crossing). Then it is straightforward to see that  the images under $\mathfrak{Z}_A$ of the equalities of the rotational Reidemeister moves \eqref{eq:R0} -- \eqref{eq:R2c_and_R3} are exactly the axioms (XC0) -- (XC3). By \cref{cor:uptangles_rotdiag}, this means that $\mathfrak{Z}_A$ descents, for every $n>0$,  to a well-defined set-theoretical map
$$
\mathfrak{Z}_A:  \eend{\Tup}{+^n} \to A^{\otimes n}
$$
(when $A$ is ribbon, this is essentially the non-functorial isotopy invariant devised by Lawrence \cite{lawrence}).
Since $R$ (resp. $R^{-1}$) is the value under $\mathfrak{Z}_A$ of the positive (resp. negative) crossing, it follows that $\mathcal{J}_n$  can be described as the set of pairs $(\mathfrak{Z}_A(b), \sigma_b)$ where $b$ is an $n$-braid. Similarly, since $\kappa$ is the value attached to $C^-$, the map $\varphi_{n,m}$  is precisely the algebraic counterpart of the partial closure \eqref{eq:generic_closure} for braids. Therefore $\mathcal{I}_n$ consists of products of elements whose first component is of the form $\mathfrak{Z}_A(D)$, where $D$ is the partial closure of a braid.  The upshot of this is   equalities in $\mathcal{I}_n \subset A^{\otimes n}$ involve only elements that can be expressed in terms of the universal $R$-matrix and balancing element and therefore  can be checked diagrammatically invoking the isotopy invariance of $\mathfrak{Z}_A$.


Let us apply this strategy to show in the first place that the category $\mathcal{E}(A)$ is braided. The set of morphisms $\tau_{n,m}$ satisfies the axioms of a braiding by construction, so we only have to check that they are natural, that is, that $\tau$ is indeed a natural transformation $\otimes \Longrightarrow \otimes^{\mathrm{op}}$. The naturality means that for any pair of arrows $f: n \to n $ , $g: m \to m$ we have $$ (g \otimes f) \circ  \tau_{n,m} = \tau_{n,m} \circ (f \otimes g).$$

Now observe that the first component of $\tau_{n,m}$ equals $\mathfrak{Z}_A (c_{n,m})$ where $$c_{n,m}:=(\sigma_m \cdots \sigma_1)(\sigma_{m+1} \cdots \sigma_2) \cdots (\sigma_{m+n-1} \cdots \sigma_n) \in B_{n+m}$$   is the braiding constrain of $\mathcal{B}^0$. Therefore, the naturality of $\tau$ is a consequence of the isotopy
\begin{equation*}
\centre{
\labellist \small \hair 2pt
\pinlabel{\footnotesize{$ \cdots$}}  at 125 42
\pinlabel{\footnotesize{$ \cdots$}}  at 365 42
\pinlabel{\footnotesize{$ \cdots$}}  at 991 252
\pinlabel{\footnotesize{$ \cdots$}}  at 1249 252
\pinlabel{$ D'$}  at 90 416
\pinlabel{$ D$}  at 404 415
\pinlabel{$ D$}  at 964 135
\pinlabel{$ D'$}  at 1272 139
\pinlabel{$ =$}  at 665 280
\endlabellist
\centering
\includegraphics[width=0.5\textwidth]{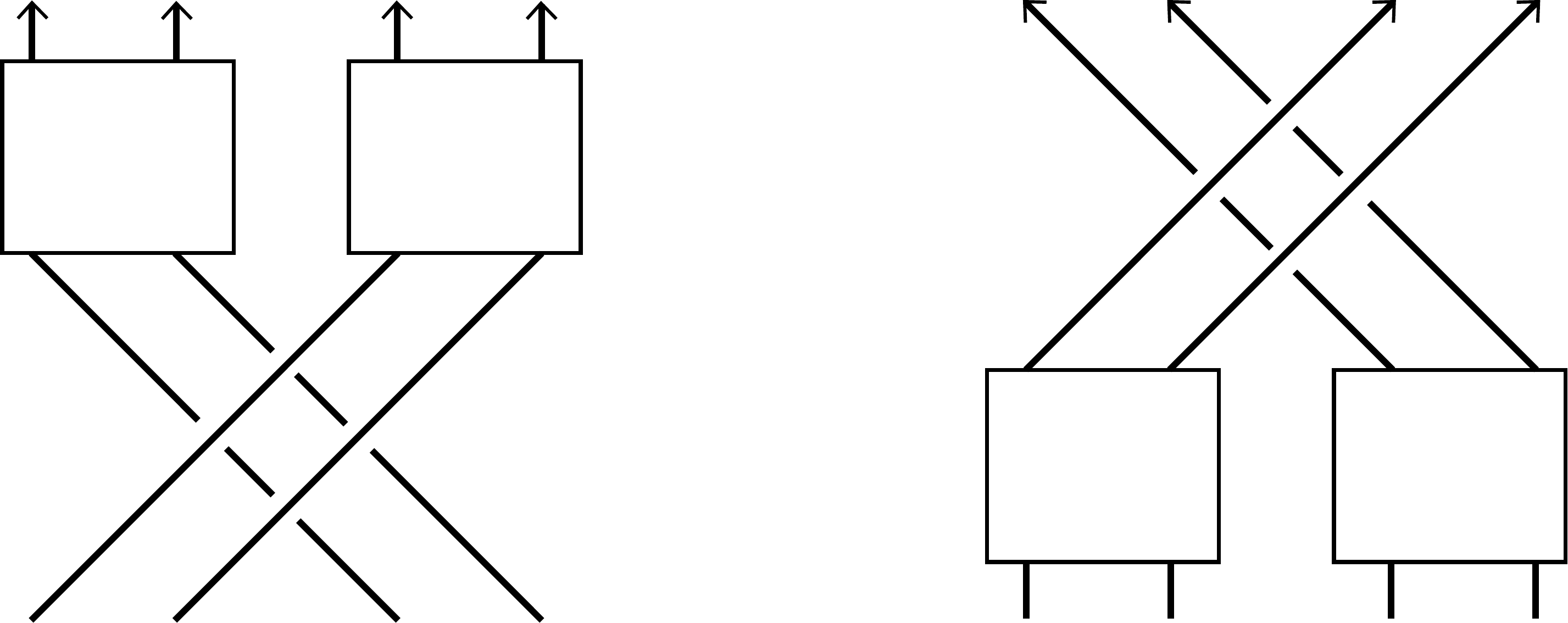}}
\end{equation*}
after applying $ \mathfrak{Z}_A$, where $D$ and $D'$ denote partial closures of braids.

Let us pass now to the twist $\theta$.  By construction, $\theta$ satisfies the axioms of a twist, so again all one has to show is that $\theta$ is a natural transformation $\theta : \id_{\mathcal{E}(A)} \Longrightarrow \id_{\mathcal{E}(A)}$, that is, that for every $f: n \to n$ we have $$f \circ \theta_n = \theta_n \circ f,$$ or in other words, that $ \theta_n$ is central in the monoid $\mathcal{I}_n$.  Again the key observation is that the first component of  $\theta_n$ equals the value under $\mathfrak{Z}_A$ of the twist $t_n$  of $\mathcal{B}$. Therefore, the naturality of $\theta$ follows then from the tangle isotopy
\begin{equation*}
\centre{
\labellist \small \hair 2pt
\pinlabel{$ D$}  at 81 710
\pinlabel{$ D$}  at 914 110
\pinlabel{$ =$}  at 665 360
\pinlabel{\scriptsize{$ \cdots$}}  at 84 100
\pinlabel{\scriptsize{$ \cdots$}}  at 920 222
\endlabellist
\centering
\includegraphics[width=0.5\textwidth]{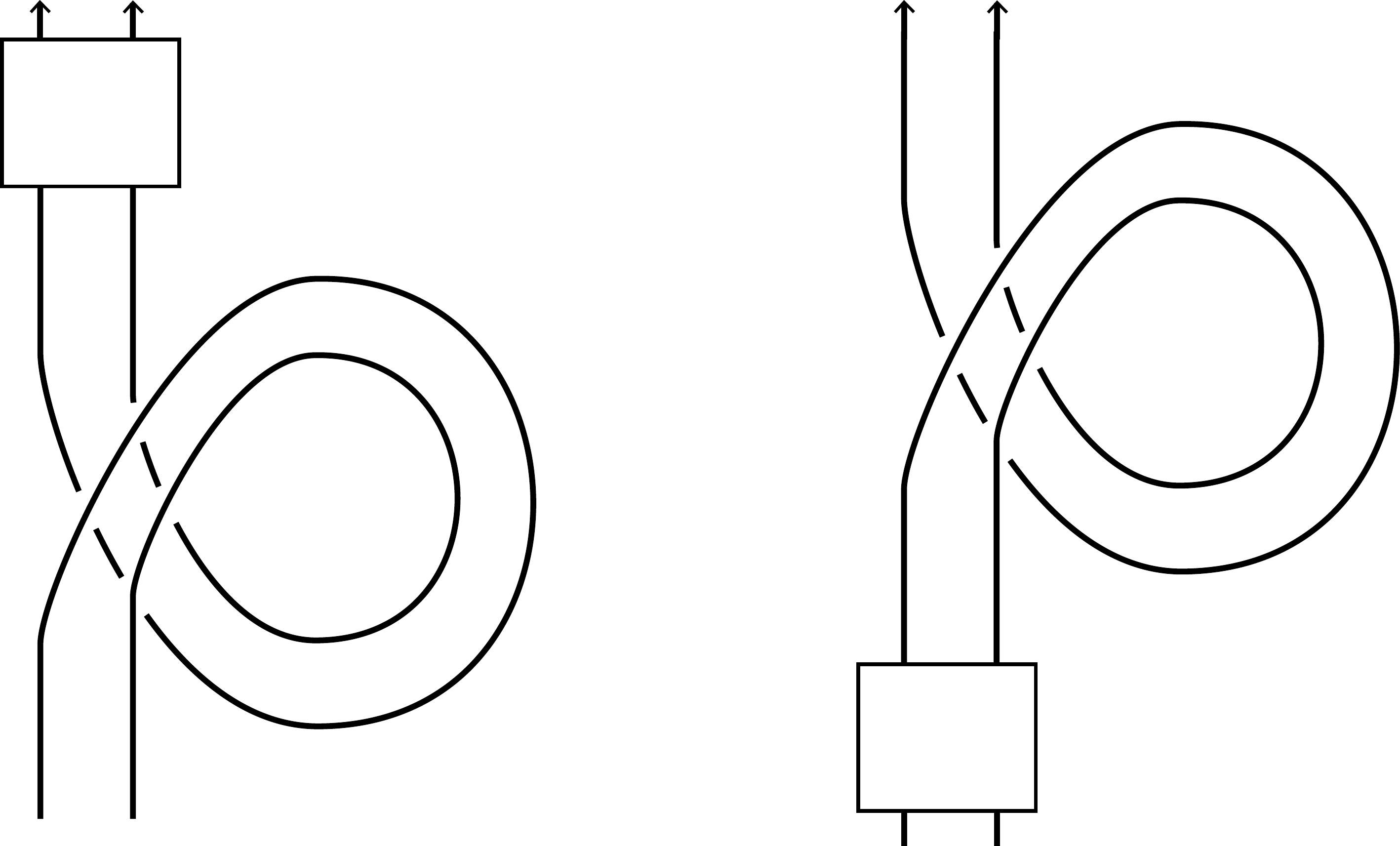}}
\end{equation*}
after applying $\mathfrak{Z}_A$, where again $D$ denotes the  partial closure of a braid.

%

Lastly let us check that $\mathcal{E}(A)$ is open traced. First let us argue that \eqref{eq:def_trace} is well-defined, that is, that $\mathrm{tr}_{n,n}^m (f) \in \mathcal{I}_n$ for any  $f: n+m \to n+m$ which is $(n,n,m)$-admissible. Any such $f$ can be viewed as the value under $\mathfrak{Z}_A$ of a stacking of partial closures of braids. Then the isotopy
\begin{equation*}
\centre{
\labellist \small \hair 2pt
\pinlabel{$ b_1$}  at 294 1716
\pinlabel{$ b_1$}  at 1944 1716
\pinlabel{$ b_n$}  at 294  816
\pinlabel{$ b_n$}  at 1944  816
\pinlabel{$ \vdots$}  at 96 1299
\pinlabel{$ \vdots$}  at  324  1299
\pinlabel{$ \vdots$}  at  1722  1299
\pinlabel{$ \vdots$}  at  1950  1299
\pinlabel{$ \vdots$}  at  2233  1299
\pinlabel{$ =$}  at 1308 1242
\endlabellist
\centering
\includegraphics[width=0.7\textwidth]{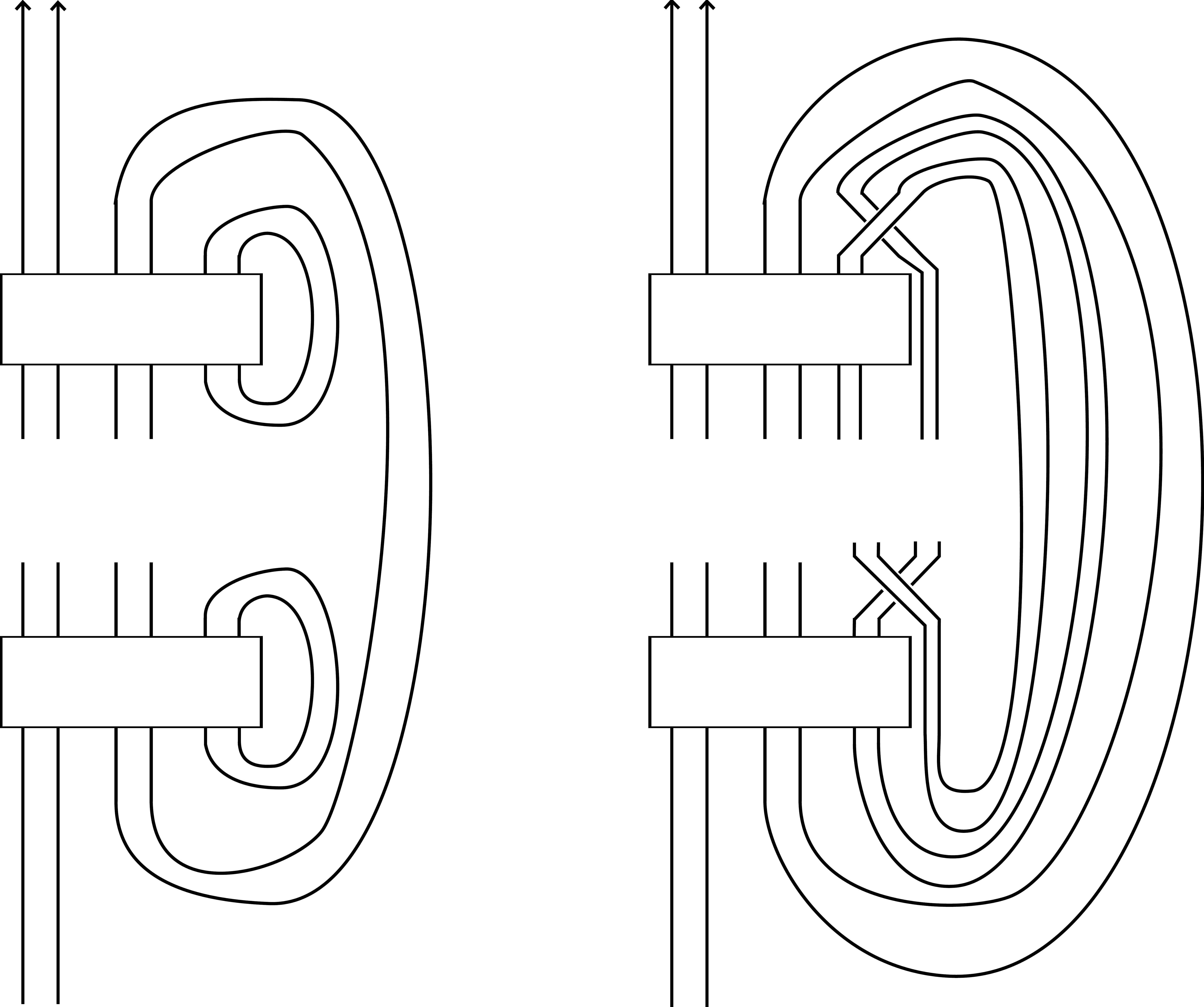}}
\end{equation*}
demostrates, after applying $\mathfrak{Z}_A$, that the trace is well-defined. Let us now check the axioms:  (OTC1), (OTC2) and (OTC4) hold by construction. (OTC3) follows after applying $\mathfrak{Z}_A$ to the following isotopic tangles:
\begin{equation*}
\centre{
\labellist \small \hair 2pt
\pinlabel{$ D$}  at 195 520
\pinlabel{$ D'$}  at 843 530
\pinlabel{$ D$}  at 1627 514
\pinlabel{$ D'$}  at 2000 525
\pinlabel{$ =$}  at 1152 514
\pinlabel{\scriptsize{$ \cdots$}}  at 115 304
\pinlabel{\scriptsize{$ \cdots$}}  at 1531 260
\pinlabel{\scriptsize{$ \cdots$}}  at 325 358
\pinlabel{\scriptsize{$ \cdots$}}  at 845 337
\endlabellist
\centering
\includegraphics[width=0.8\textwidth]{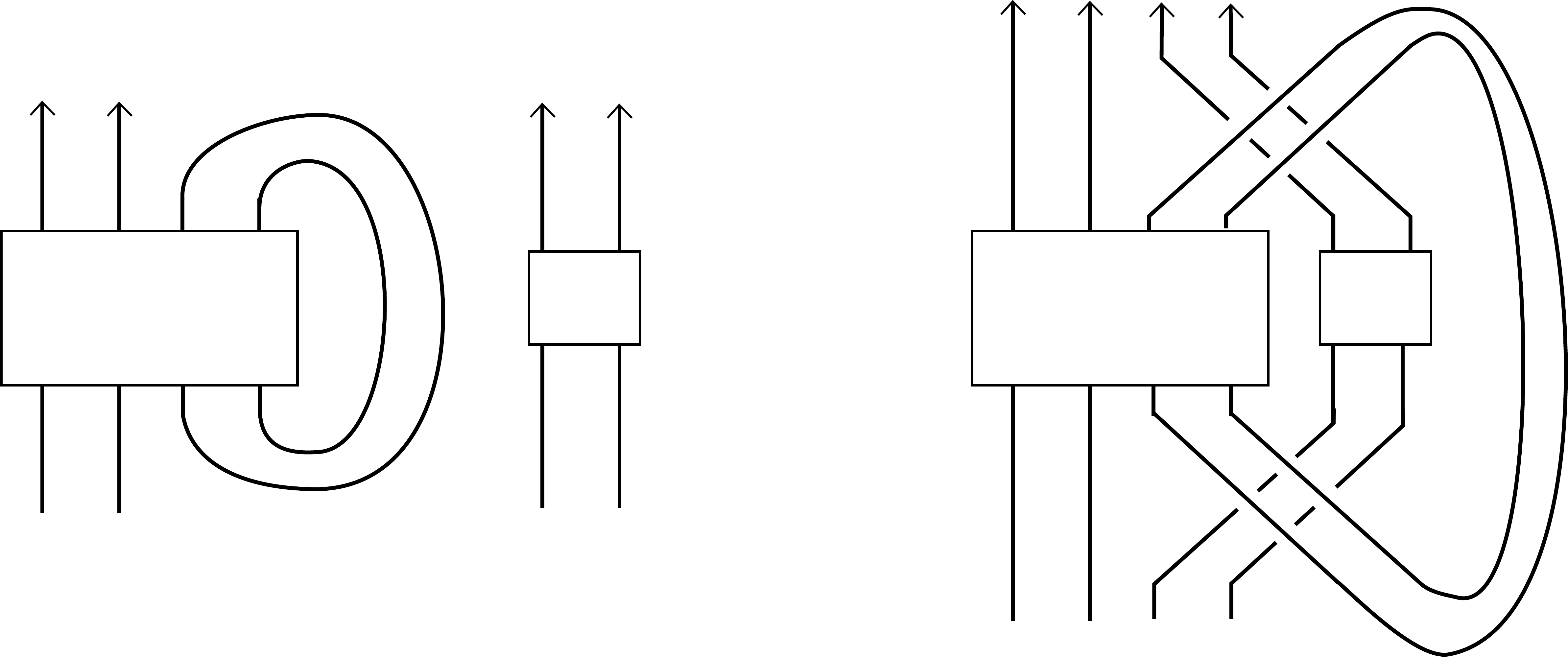}}
\end{equation*}
It is only left to show the naturality of $\mathrm{tr}_{n,n}^m$ on both $n$ and $m$.  The naturality on $n$ means that for any pair of arrows $u,v: n \to n$ we have
$$ \mathrm{tr}_{n,n}^m ((v \otimes  \id) \circ f \circ (u\otimes \id)) = v \circ \mathrm{tr}_{n,n}^m (f) \circ u. $$ Again this is a consequence of the following tangle isotopy after applying $\mathfrak{Z}_A$.
\begin{equation*}
\centre{
\labellist \small \hair 2pt
\pinlabel{$ D$}  at 113 880
\pinlabel{$ D'$}  at 111 152
\pinlabel{$ D$}  at 1166 773
\pinlabel{$ D'$}  at 1168 260
\pinlabel{$ =$}  at 808 530
\pinlabel{\scriptsize{$ \cdots$}}  at 115 304
\pinlabel{\scriptsize{$ \cdots$}}  at 1160 72
\pinlabel{\scriptsize{$ \cdots$}}  at 325 358
\pinlabel{\scriptsize{$ \cdots$}}  at 1364 333
\endlabellist
\centering
\includegraphics[width=0.6\textwidth]{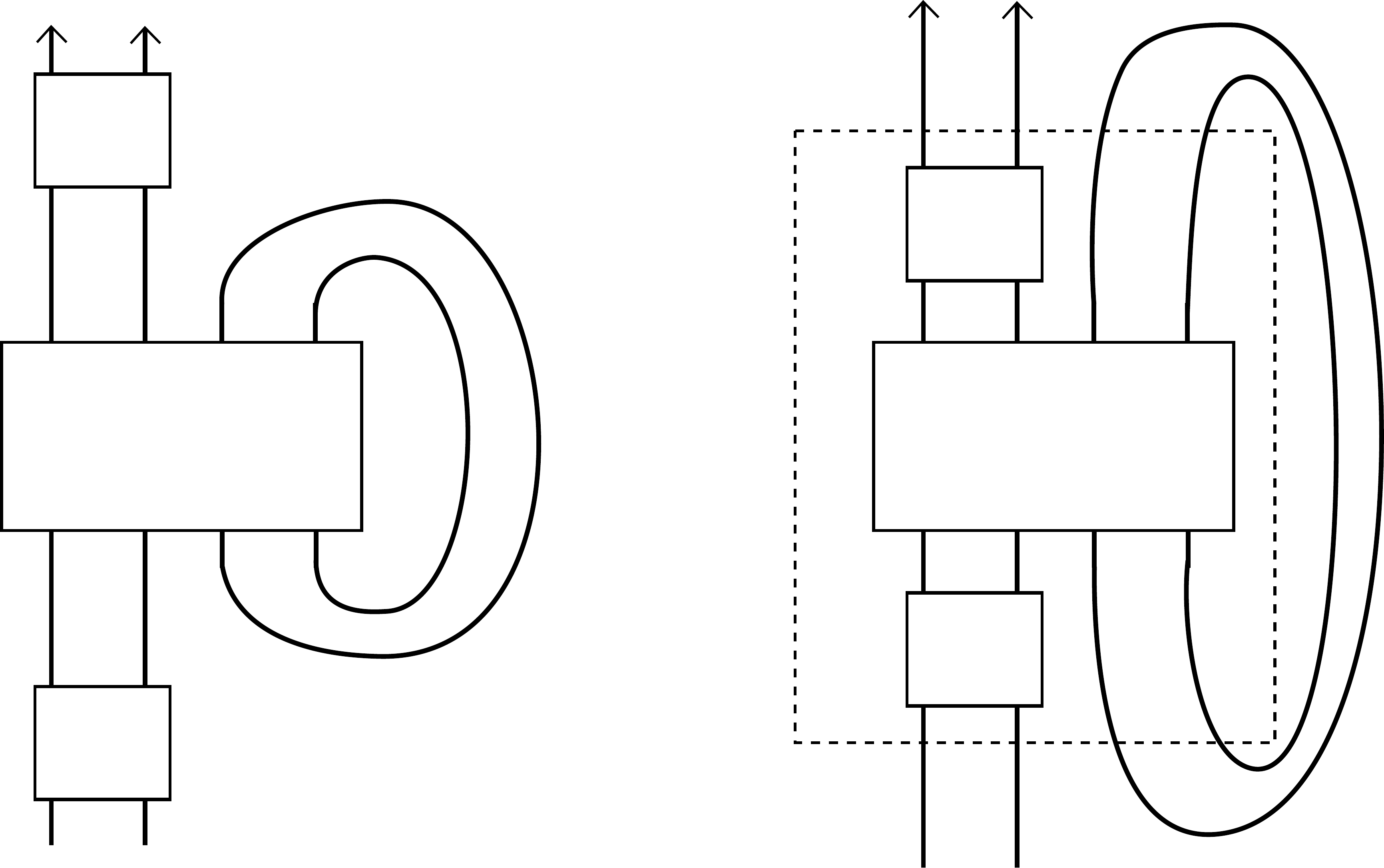}}
\end{equation*}
Similarly, the naturality on $m$ means that for any $u: m \to m$ we have
$$ \mathrm{tr}_{n,n}^m (f \circ (\id \otimes u))  = \mathrm{tr}_{n,n}^m (  (\id \otimes u)\circ f) .$$ This follows from the tangle isotopy
\begin{equation*}
\centre{
\labellist \small \hair 2pt
\pinlabel{$ D$}  at 200 300
\pinlabel{$ D$}  at 1230 523
\pinlabel{$ D'$}  at 317 551
\pinlabel{$ D'$}  at 1345 282
\pinlabel{$ =$}  at 835 400
\pinlabel{\scriptsize{$ \cdots$}}  at 110 100
\pinlabel{\scriptsize{$ \cdots$}}  at 1130 222
\pinlabel{\scriptsize{$ \cdots$}}  at 317 134
\pinlabel{\scriptsize{$ \cdots$}}  at 1345 695
\endlabellist
\centering
\includegraphics[width=0.6\textwidth]{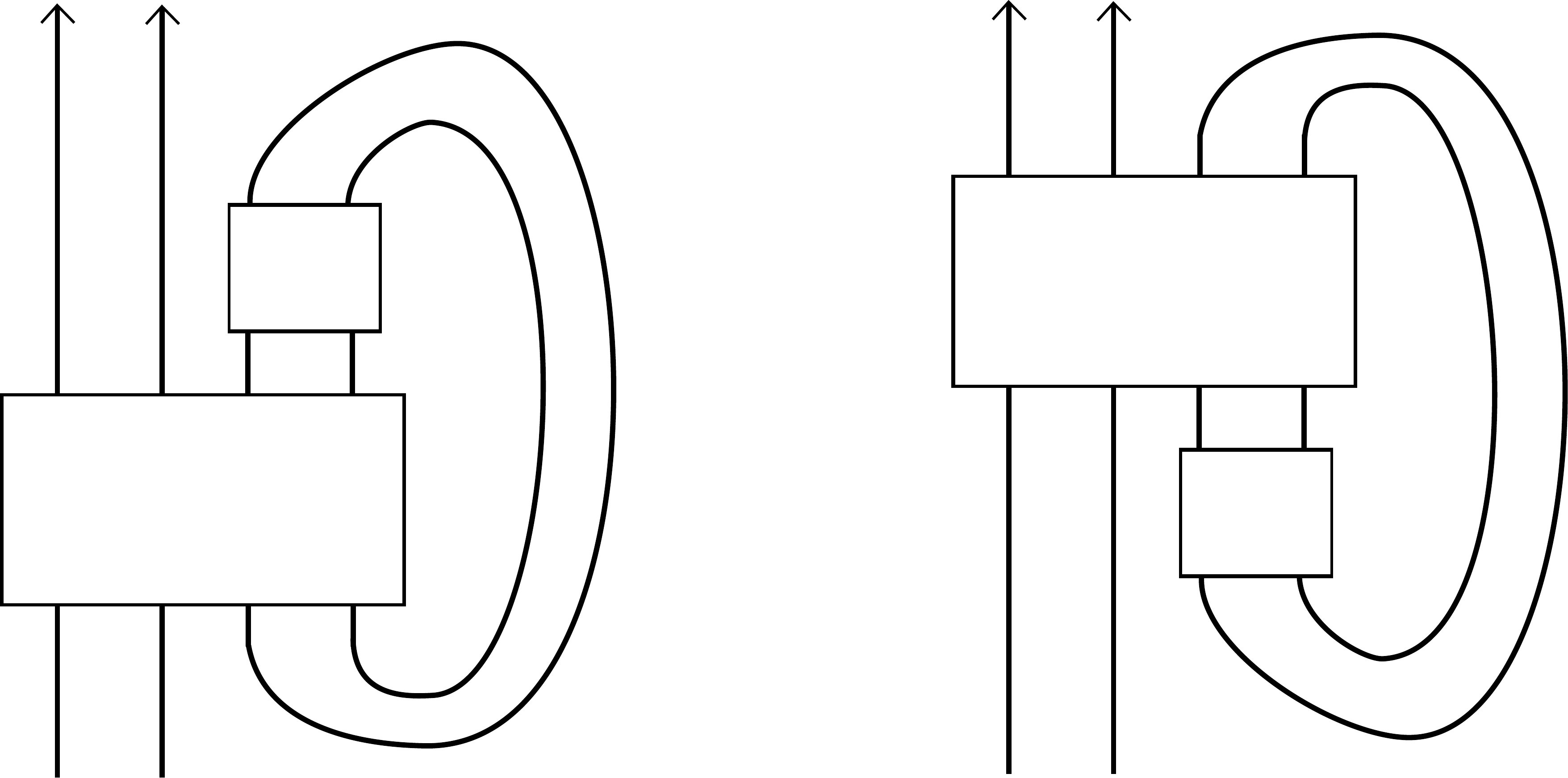}}
\end{equation*}
after applying $\mathfrak{Z}_A$. This concludes the proof.
\end{proof}

The unique strict monoidal functor 
\begin{equation}\label{eq:Z_A}
Z_A  : \Tup \to \mathcal{E}(A)
\end{equation}
which  according to \cref{thm:UP_opentraced} maps $+$ to $1$ and preserves the open-trace monoidal structure  will be called the \textit{universal invariant} with respect to the XC-algebra $A$.

\begin{remark}\label{rem:Z_mathfrakZ}
It should be clear from the proof of the previous theorem that $Z_A$ is bijective-on-objects, full and that for an upwards tangle $T \in \operatorname{End}_{\Tup}(+^n)$ we have that $$ Z_A(T)= (\mathfrak{Z}_A (T), \sigma_T) .$$ 
\end{remark}

\begin{example}
Let $D$ be the 2-component  upwards tangle diagram below, that we have already decorated  according to \eqref{eq:beads}:
\vspace*{5pt}
\begin{equation*}
\centre{
\labellist \small \hair 2pt
\pinlabel{{\normalsize $D$}} at -134 854
\pinlabel{$ \color{red} \bullet$} at 262 172
\pinlabel{$ \color{red} \bullet$} at 356 172
\pinlabel{$ \color{red} \alpha_i$} [r] at 247 172
\pinlabel{$ \color{red} \beta_i$} [l] at 371 172
\pinlabel{$ \color{OliveGreen} \bullet$} at 262 363
\pinlabel{$ \color{OliveGreen} \bullet$} at 356 363
\pinlabel{$ \color{OliveGreen} \alpha_j$} [r] at 247 363
\pinlabel{$ \color{OliveGreen} \beta_j$} [l] at 371 363
\pinlabel{$ \color{magenta} \bullet$} at 428 535
\pinlabel{$ \color{magenta} \bullet$} at 542 535
\pinlabel{$ \color{magenta} \bar{\beta}_\ell$} [r] at 413 535
\pinlabel{$ \color{magenta} \bar{\alpha}_\ell$} [l] at 557 535
\pinlabel{$ \color{blue} \bullet$} at 250 712
\pinlabel{$ \color{blue} \bullet$} at 364 712
\pinlabel{$ \color{blue}  \bar{\beta}_s$} [r] at 235 712
\pinlabel{$ \color{blue} \bar{\alpha}_s$} [l] at 379 712
\pinlabel{$ \color{orange} \bullet$} at 3 498
\pinlabel{$ \color{orange} \kappa^{-1}$} [r] at -13 498
\endlabellist
\centering
\includegraphics[width=0.2\textwidth]{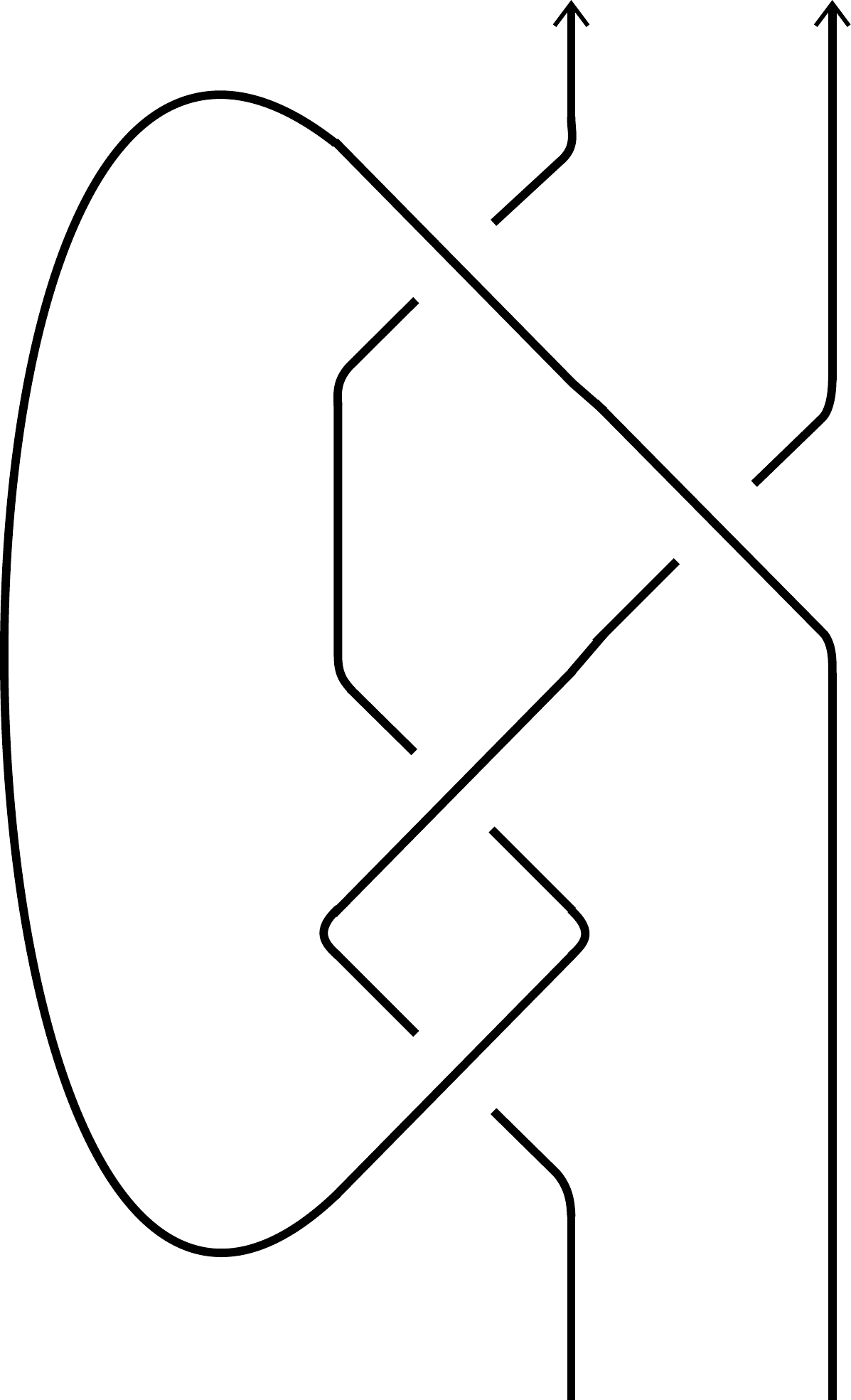}}
\end{equation*}
Then we have 
$$\mathfrak{Z}_A(D)= \sum_{i,j,\ell,s}   \bar{\beta}_\ell \alpha_j \beta_i \otimes      \bar{\beta}_s \beta_j\alpha_i\kappa^{-1} \bar{\alpha}_s \bar{\alpha}_\ell\in A \otimes A,$$
so that its universal invariant is $Z_A(T)= (\mathfrak{Z}_A(D), (12))$.
\end{example}

We would now like to consider the case where $A$ is a traced XC-algebra. In this case it will turn out that $\mathcal{E}(A)$ is a traced monoidal category in the sense of Joyal-Street-Verity:

\begin{construction}\label{const:I_n2}
Let $(A, R, \kappa) $ be a traced XC-algebra. We are now going to slightly modify  \cref{const:I_n}  in order to include the trace of $A$. First, we let $\mathcal{J}_{n} \subset A^{\otimes n} \times  \SS_n$ be as in \cref{const:I_n}. Let us now define set-theoretical maps
\begin{equation}\label{eq:varphi_tr}
 \varphi_{n,m} :  A^{\otimes n+m} \times \mathfrak{S}_{n+m} \to  A^{\otimes n} \times \mathfrak{S}_n
\end{equation}
inductively as follows: first set $\varphi_{n,0}$ to be the identity. If $$(u, \sigma)= \left( \sum x_1 \otimes \cdots \otimes x_{n+1}, \sigma \right) \in   A^{\otimes n+1} \times \mathfrak{S}_{n+1} ,$$ we distinguish two situations: if $\sigma (n+1) \neq n+1$, then  define $\varphi_{n,1} (u, \sigma)$ as in \cref{const:I_n}, that is,
\begin{align*}
\varphi_{n,1} &(u, \sigma) := \\
&\left(\sum x_1 \otimes  \cdots \otimes x_{\sigma^{-1}(n+1)-1} \otimes  x_{n+1} \kappa x_{\sigma^{-1} (n+1)} \otimes x_{\sigma^{-1}(n+1)+1} \otimes \cdots \otimes x_{n}, \widehat{\sigma}\right), 
\end{align*}
where for $j=1, \ldots , n$, $$\widehat{\sigma}(j) = \begin{cases} \sigma(n+1), & j= \sigma^{-1}(n+1), \\ \sigma (j), & \mathrm{else}  \end{cases}.$$
If $\sigma(n+1)=n+1$, then set
$$ \varphi_{n,1}(u, \sigma) := \left( \mathrm{tr}(\kappa x_{n+1} ) \sum x_1 \otimes  \cdots \otimes  x_{n}, \widetilde{\sigma} \right),  $$
where $\widetilde{\sigma} \in \mathfrak{S}_{n}$ is given by $\widetilde{\sigma} (j) := \sigma (j)$ for $j=1, \ldots , n$.

For $m >1$, let $\varphi_{n,m} := \varphi_{n,m-1} \circ \varphi_{n+m-1,1}$, and define $ \varphi'_{n,m}$ as the restriction of $ \varphi_{n,m}$ to $\mathcal{J}_{n} $,
\begin{equation}
 \varphi'_{n,m}:= (\varphi_{n,m})_{|\mathcal{J}_{n} } :  \mathcal{J}_{n} \to  A^{\otimes n} \times \mathfrak{S}_n.
\end{equation}

Finally, let $\mathcal{I}_{n}$ be the smallest submonoid of $A^{\otimes n} \times \mathfrak{S}_n$ that contains the subsets $\mathrm{Im}(\varphi'_{n,m})$ for $m \geq 0$.  For $n=0$, we set $\mathcal{I}_{0}:= \Bbbk$ with its multiplicative monoid structure.
\end{construction}

\begin{construction}
If $(A, R, \kappa)$ is a traced XC-algebra, we will now slightly modify its category of elements $\mathcal{E}(A)$ defined in \cref{constr:E(A)} to incorporate the trace. As a balanced  monoidal category, we let $\mathcal{E}(A)$ be defined as in \cref{constr:E(A)}, but using instead the monoids $\mathcal{I}_n$ from \cref{const:I_n2}. 

Now, given a morphism $f: n+m \to n+m$ in $\mathcal{E}(A)$, we define 
\begin{equation}
\mathrm{tr}_{n,n}^m(f) := \varphi_{n,m}(f) 
\end{equation}
where the maps $\varphi_{n,m}$ are those in \eqref{eq:varphi_tr}.
\end{construction}

\begin{theorem}\label{thm:E(A)_traced}
For any traced XC-algebra $A$, the category $\mathcal{E}(A)$ is a strict traced monoidal category.
\end{theorem}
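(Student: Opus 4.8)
The plan is to mimic closely the diagrammatic strategy of the proof of \cref{thm:E(A)_opentraced}, now extending the bead invariant $\mathfrak{Z}_A$ to tangles carrying closed components. Concretely, I would first extend $\mathfrak{Z}_A$ to the whole of $\T^+$: decorate a rotational diagram of an upwards tangle with closed components by beads as in \eqref{eq:beads}, read each open component from right to left along its orientation to obtain a word in $A$, and evaluate each closed component by reading its beads cyclically into a word $w \in A$ and applying the scalar $\mathrm{tr}(\kappa w) \in \Bbbk$; the total value is the product of these scalars times the tensor of the open–component words, landing in $A^{\otimes n}$. The first thing to check is that the evaluation of a closed component is well defined: the cyclic invariance of the reading (where one starts reading the loop) is exactly the identity $\mathrm{tr}(ab)=\mathrm{tr}(ba)$, while invariance under the crossing moves is governed by (XC0)--(XC3) as before. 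Together with \cref{cor:uptangles_rotdiag} and its evident analogue for $\T^+$, this shows that $\mathfrak{Z}_A$ descends to a well-defined isotopy invariant $\mathfrak{Z}_A : \eend{\T^+}{+^n} \to A^{\otimes n}$, and that the trace–closure maps $\varphi_{n,m}$ of \cref{const:I_n2} are precisely the algebraic counterpart of the full closure of the rightmost strands, so that $\mathcal{I}_n = \{(\mathfrak{Z}_A(T),\sigma_T) : T \in \eend{\T^+}{+^n}\}$.

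Granting this, the remaining structure is verified exactly as in \cref{thm:E(A)_opentraced}, the only difference being that the admissibility hypotheses are dropped and the governing tangle isotopies now take place inside $\T^+$, which is genuinely traced by \cref{thm:UP_traced}. For the balanced structure, the braiding $\tau$ and twist $\theta$ are given by the same formulae, and since their first components are still the values of $\mathfrak{Z}_A$ on the braiding constraint $c_{n,m}$ of $\B^0$ and the full twist $t_n$ of $\B$, their naturality with respect to the enlarged monoids $\mathcal{I}_n$ --- now possibly containing trace terms --- follows from the very same isotopies after applying $\mathfrak{Z}_A$. The point is simply that a closed component of a morphism $f \in \mathcal{I}_n$ can be freely slid past the braiding and the twist, which is an isotopy in $\T^+$.

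It then remains to verify the Joyal--Street--Verity axioms (TC1)--(TC4) for $\mathrm{tr}_{n,n}^m := \varphi_{n,m}$. Axioms (TC1) and (TC2) hold by construction from the inductive definition $\varphi_{n,0}=\id$ and $\varphi_{n,m}=\varphi_{n,m-1}\circ\varphi_{n+m-1,1}$. The generating instance $U=1$ of (TC4) reduces to the identity $\varphi_{1,1}(R,(12)) = (\sum_i \beta_i\kappa\alpha_i, \id) = (\nu,\id) = \theta_1$, which is exactly (XC1f) (and its inverse for $\tau^{-1}$); the general case $U=k$ then follows formally from (TC2) together with the twist axioms, or equivalently from the isotopy exhibiting $\theta_U$ as the trace of $\tau_{U,U}$. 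Finally, axiom (TC3) together with the naturality of $\mathrm{tr}_{n,n}^m$ in $n$ and $m$ is obtained precisely as for (OTC3) and the naturality clauses in \cref{thm:E(A)_opentraced}, by applying $\mathfrak{Z}_A$ to the corresponding isotopies of tangles in $\T^+$, the only novelty being that some of the strands being slid may now close into loops.

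The main obstacle is the very first step: proving that the $\kappa$-twisted trace $\mathrm{tr}(\kappa\, w)$ yields an honest isotopy invariant of closed components. The delicate issue is that a closure arc has no preferred basepoint and can be isotoped all the way around the loop, which may alter the apparent rotation number read off the diagram; one must check that $\mathrm{tr}(\kappa\, w)$ is nonetheless unchanged. This is where the cyclicity of $\mathrm{tr}$ (its vanishing on commutators) must combine with the invariance of the open bead–reading under the rotation–trading move $(\Omega 1f)$, that is (XC1f), and with (XC0), which renders $\kappa$ transparent to $R$ up to conjugation, so that the single factor of $\kappa$ inserted by the closure records exactly the framing and absorbs any spurious powers of $\kappa$. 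Once this single–loop invariance is secured, the multi–component evaluation, the balanced naturality, and the trace axioms all follow formally as above.
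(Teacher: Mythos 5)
Your proposal follows essentially the same route as the paper: extend $\mathfrak{Z}_A$ to $\T^+$ by evaluating each closed component through the trace (cyclicity of $\mathrm{tr}$ giving basepoint independence), identify $\varphi_{n,m}$ with the diagrammatic closure, and rerun the isotopy arguments of \cref{thm:E(A)_opentraced} with admissibility dropped. One caveat: in the paper the scalar attached to a closed component is $\mathrm{tr}(w)$ where $w$ is the full cyclic bead word of that component (the $\kappa$ being already contributed by the $C^-$ on the closure arc, which is how it matches $\mathrm{tr}(\kappa x_{n+1})$ in \cref{const:I_n2}), so your formula $\mathrm{tr}(\kappa w)$ is correct only if $w$ denotes the word read off the strand \emph{before} closing; as literally written, reading all beads of the closed loop and then multiplying by $\kappa$ would insert a spurious extra factor of $\kappa$.
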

\begin{proof}
The proof is entirely similar to that of \cref{thm:E(A)_opentraced} so we will just mention what  the changes needed to adapt it are.  For $n \geq 0$, let us call \textit{$n$-tangle} to any element of $\operatorname{End}_{\T^+}(n)$, that is, a tangle with $n$ open components (and possibly also closed components) such that the open components point up near the head and the tail of the strands. By a similar argument as in \cref{lemma:every_tangle_has_rot_diag}, every $n$-tangle has a diagram in rotational form. Now, if $D$ is a rotational diagram of an $n$-tangle with $m \geq 0$ closed components, and the components are ordered so that for $1 \leq i \leq n$, the $i$-th component is open and for $n< i \leq n+m$ the $i$-th component is closed, then  let $\mathfrak{Z}_A(D)_{(i)}$ be the (formal) word given by writing from left to right the labels of the beads in the $i$-th component according to the  orientation of the strand, taking any point as basepoint for the closed components. Then set 
\begin{equation}
\mathfrak{Z}_A(D) :=\sum \mathrm{tr}(\mathfrak{Z}_A(D)_{(n+1)} ) \cdots \mathrm{tr}(\mathfrak{Z}_A(D)_{(n+m)} )  \cdot \mathfrak{Z}_A(D)_{(1)} \otimes \cdots \otimes \mathfrak{Z}_A(D)_{(n)} \in A^{\otimes n}
\end{equation}
where the summation runs through all subindices in $R^{\pm 1}$ (one for each crossing). Since the trace factors through the commutator subgroup, this element is indeed well-defined. In particular, using the analogous of \cref{cor:uptangles_rotdiag} for $n$-tangles, we get a map 
$$
\mathfrak{Z}_A:  \eend{\T^+}{+^n} \to A^{\otimes n}
$$
The rest of the proof is now identical to that of \cref{thm:E(A)_opentraced} using this map.
\end{proof}

The unique strict monoidal functor 
\begin{equation}\label{eq:traced_universal_invariant}
Z_A: \T^+ \to \mathcal{E}(A)
\end{equation}
arising from  \cref{thm:UP_traced} that sends $+$ to $1$ and preserves the braiding, twist and trace will be called the (traced) \textit{universal invariant} of $A$.

\begin{remark}
Similarly to \cref{rem:Z_mathfrakZ},       we also have in this case that for an $n$-tangle  $T \in \operatorname{End}_{\T^+}(n)$ we have that $$ Z_A(T)= (\mathfrak{Z}_A (T), \sigma_T) .$$
\end{remark}

For concrete examples that recover the Jones and Alexander polynomial of links, we refer the reader to \cite[Ch. 2]{becerra_thesis}.

\subsection{Comparison with Kerler-Kauffman-Radford functorial invariant}

In this subsection we would like to compare the canonical functor $$Z_A: \Tup \to \mathcal{E}(A)$$ from \eqref{eq:Z_A} with (an appropriate  version of) Kerler-Kauffman-Radford ``decoration functor'' $$\mathsf{Dec}_A : \Tup \to s\Tup (A)$$ which has as target the so-called category of ``singular upwards $A$-tangles''  \cite{KR3, kerler1}. Here $(A,R, \kappa)$ will be an XC-algebra over some commutative ring $\Bbbk$ .

Given a rotational diagram $D$ of an upwards tangle (with blackboard framing), we write $sD$ for the singular diagram on the square  which is identical to $D$ but forgets about the ``over/under'' information at the crossings. By a \textit{decorated singular diagram} we will understand a singular diagram where each strand carries an arbitrary number of  decorations (``beads''), each of them labelled by an element in $A$. For example if $x,y,z \in A$, we could have
\vspace*{5pt}
\begin{equation*}
\centre{
\labellist \small \hair 2pt
\pinlabel{$ \color{violet} \bullet$} at 10 70
\pinlabel{$ \color{violet} \bullet$} at 280 50
\pinlabel{$ \color{violet} y$} at 280 10
\pinlabel{$ \color{violet} x$} at -40 70
\pinlabel{$ \color{violet} \bullet$} at 130 160
\pinlabel{$ \color{violet} z$} at 170 200
\endlabellist
\centering
\includegraphics[width=0.13\textwidth]{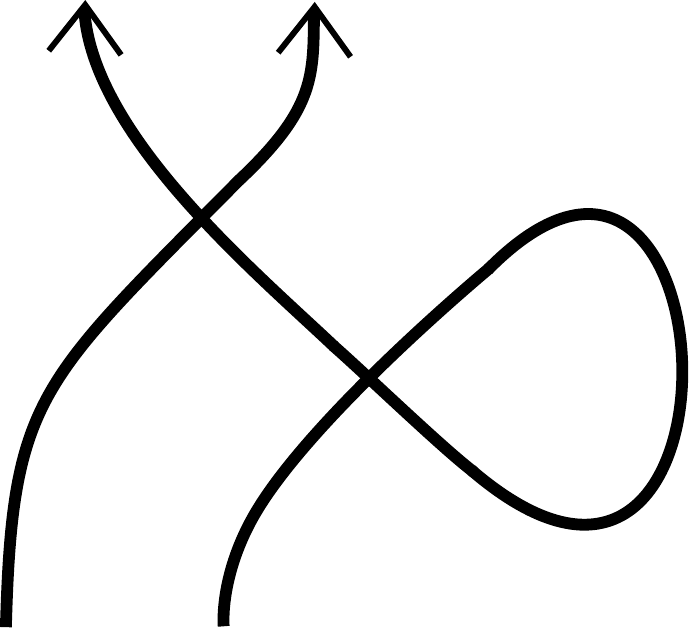}}
\end{equation*}

Given a singular diagram with beads placed, we can label these with two different set of labels. We can consider then formal linear combinations of these decorated singular diagrams, that we will still call in the same way.

We will consider an equivalence relation on the set of decorated singular diagrams. Firstly, we will identify two such diagrams if the underlying singular diagrams are homotopic rel. endpoints and such a homotopy preserves the order of the beads (extended linearly). Secondly, the labels are understood to be ``multilinear'' in the sense that a diagram with a bead labelled with $\lambda x$, where $\lambda \in \Bbbk$ and $x \in A$, can be replaced by $\lambda$ times the same decorated diagram with the corresponding bead labelled with $x$; and a diagram with a bead labelled with $x+y$, can be replaced by a sum of two diagrams where in each of them the bead is labelled by $x$ and $y$, respectively. Thirdly, we will identify a given decorated singular  diagram $sD$ with another one which is identical to $sD$ except that in a certain component all labelled beads are replaced by a single one (placed in that component) which is labelled by the (formal) word of elements of $A$ resulting from writing down the labels of the beads from right to left following the orientation of the strand. For instance given (finite) collections $(x_i)_i$, $(y_j)_j$, $(z_k)_k$ of elements of $A$ we will draw
\vspace*{5pt}
\begin{equation*}
\centre{
\labellist \small \hair 2pt
\pinlabel{$ \color{violet} \bullet$} at 10 70
\pinlabel{$ \color{violet} \bullet$} at 280 50
\pinlabel{$ \color{violet} y_j$} at 280 10
\pinlabel{$ \color{violet} x_i$} at -40 70
\pinlabel{$ \color{violet} \bullet$} at 130 160
\pinlabel{$ \color{violet} z_k$} at 170 200
\endlabellist
\centering
\includegraphics[width=0.13\textwidth]{figures/singular_diagram}}
\end{equation*}
for the corresponding sum over all indices.

The category of \textit{singular upwards $A$-tangles} $s\Tup (A)$ is defined as follows: the objects of $s\Tup (A)$ are the same as for $\Tup$, that is elements of the free monoid $\mathrm{Mon}(+)$, and the arrows are (equivalence classes of)  decorated singular diagrams. Composition is given by stacking as in $\Tup$, and the identity of $+^n$ is given by obvious diagram of the identity of $+^n$ in $\Tup$ where every component has been decorated with the unit of $A$. It is also clear that $s\Tup (A)$ admits a strict monoidal structure mimicking that of $\Tup$.

Then Kerler-Kauffman-Radford  \textit{decoration functor} $\mathsf{Dec}_A $ is defined to be the identity on objects and on arrows it is given by assigning to a given rotational diagram of an upwards tangle the underlying singular diagram which has been decorated placing labelled beads according to the rules \eqref{eq:beads}.

In order to make the comparison between $Z_A$ and $\mathsf{Dec}_A$, it will be convenient to consider a category isomorphic to $s\Tup (A)$ and that can be viewed as an ``algebraisation'' of it. This will make use once more of the bundle monoidal category construction from \cref{subsec:constr_M_n}.

\begin{construction}
Let $n>0$ be an integer. As in \cref{const:I_n}, let us take the set-theoretical cartesian product $A^{\otimes n} \times \mathfrak{S}_n$, and we consider over it the monoid structure with product given by 
$$
(u, \sigma) \cdot (v,\tau):= (\tau^{-1}_*(u) \cdot v \ , \  \sigma \tau) 
$$
as in \eqref{eq:composition_E(A)}, with unit $\bm{1}:= (1 \otimes  \cdots \otimes 1, \id)$. Note that for $n=0$, $A^{\otimes n} \times \mathfrak{S}_n = \Bbbk$. As in \ref{constr:E(A)}, there is a family of monoid maps
$$\rho_{n,m} : (A^{\otimes n} \times \mathfrak{S}_n) \times (A^{\otimes m} \times \mathfrak{S}_m) \to (A^{\otimes n+m} \times \mathfrak{S}_{n+m})$$ given by $\rho_{n,m}((u, \sigma), (v, \xi)) := (u \otimes v, \sigma \otimes \xi)$ which satisfies the conditions from \cref{subsec:constr_M_n}. The resulting strict monoidal category will be denoted by $\widetilde{s\Tup (A)}$.
\end{construction}

There is an obvious strict monoidal functor $$s\Tup (A) \to \widetilde{s\Tup (A)}$$ which is the identity on objects and
that assigns to any decorated singular diagram the pair $(\mathfrak{Z}_A(sD), \sigma_{sD})$ where $\mathfrak{Z}_A$ is defined as in \eqref{eq:def_mathfrak_Z} but for singular diagrams. The equivalence relation taken in the set of decorated singular diagrams guarantees that this functor is well-defined.

\begin{lemma}
The previous functor is an isomorphism of strict monoidal categories.
\end{lemma}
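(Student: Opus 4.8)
The plan is to show that the functor, call it $\Phi\colon s\Tup (A) \to \widetilde{s\Tup (A)}$, is a bijection on objects and on every hom-set; since $\Phi$ is already known to be a strict monoidal functor, the resulting set-theoretic inverse is then automatically strict monoidal as well, so that $\Phi$ is an isomorphism. On objects $\Phi$ is the identity, hence bijective, so only morphisms require work. Exactly as in $\Tup$, a singular upwards diagram joins $n$ bottom points to $n$ top points by open components, so both categories carry morphisms only between equal objects; fixing $n$, it thus suffices to prove that
$$\Phi_n\colon \eend{s\Tup (A)}{+^n} \to A^{\otimes n}\times \mathfrak{S}_n, \qquad sD \mapsto (\mathfrak{Z}_A(sD), \sigma_{sD})$$
is a bijection.

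For surjectivity I would first reduce to simple tensors: by the multilinearity clause of the equivalence relation, every $\Bbbk$-linear combination of single-bead diagrams is realised, so it is enough to hit each pair $(x_1 \otimes \cdots \otimes x_n, \sigma)$ with $x_i \in A$. Realise $\sigma$ by any singular diagram with $n$ strands joining the $i$-th bottom point to the $\sigma(i)$-th top point (double points being immaterial since over/under is forgotten), and decorate the component whose tail sits at position $i$ with a single bead labelled $x_i$. Reading this bead gives $\mathfrak{Z}_A(sD)_{(i)} = x_i$ and $\sigma_{sD} = \sigma$, whence $\Phi_n(sD) = (x_1 \otimes \cdots \otimes x_n, \sigma)$, as wanted.

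For injectivity, the key point — and the main obstacle — is the homotopy classification of the underlying singular diagrams. Suppose $\Phi_n(sD) = \Phi_n(sD')$. Using the consolidation clause I would first replace each diagram by an equivalent one carrying a single bead per component, labelled by the right-to-left word of its former beads, which by definition is the $i$-th tensor factor $\mathfrak{Z}_A(sD)_{(i)}$. Equality of the permutations means the two (now single-beaded) diagrams realise the same pairing of endpoints; since a system of arcs in the square is determined up to homotopy rel endpoints by the pairing of its endpoints (each arc being contractible rel endpoints in the simply connected square, and double points posing no obstruction once over/under data is forgotten), their underlying singular diagrams are homotopic rel endpoints, and the single beads may be slid along this homotopy. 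By the first clause of the equivalence relation the two diagrams are therefore equivalent as soon as their labels agree. Finally, multilinearity makes the equivalence class of a single-beaded diagram on a fixed underlying diagram depend only on the element of $A^{\otimes n}$ that its labels represent, so the hypothesis $\mathfrak{Z}_A(sD) = \mathfrak{Z}_A(sD')$ forces the required equivalence. Together with surjectivity this shows that each $\Phi_n$ is a bijection, and since $\Phi$ is strict monoidal its inverse is as well, so $\Phi$ is an isomorphism of strict monoidal categories.
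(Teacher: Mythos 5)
Your proposal is correct and follows essentially the same route as the paper: surjectivity by realising any pair $(x_1\otimes\cdots\otimes x_n,\sigma)$ as a singular diagram for $\sigma$ with one bead $x_i$ on the $i$-th strand, and injectivity via the key observation that the underlying singular diagram is determined up to homotopy rel endpoints by its permutation, after consolidating the beads on each strand into a single one. Your write-up is somewhat more explicit about the reduction to simple tensors and the role of multilinearity, but the argument is the same.
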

\begin{proof}
We only have to check that the functor is fully faithful. The main observation is that the underlying singular diagram is fully determined by the associated permutation up to homotopy rel. endpoints. Then any arrow $(\sum x_1 \otimes \cdots \otimes x_n, \sigma)$ is the image of the decorated singular diagram which consists of the diagram corresponding with $\sigma$ decorated with beads labelled as $x_1 , \ldots, x_n$ in the strands. 

Now if two decorated singular diagrams have the same image under the previous functor, then the underling singular diagrams must be the same. Moreover we can replace all beads in a given strand by a single one. This implies that the decorated diagrams are equivalent, hence the faithfulness.  
\end{proof}

\begin{remark}
The reader should realise that the relations of decorated singular diagrams amount to the following: labelled beads in the same component correspond to elements in $A \otimes_A \cdots \otimes_A A$, whereas beads in different components correspond to elements in $A \otimes_\Bbbk \cdots \otimes_\Bbbk A$
\end{remark}

Because of the previous lemma, we will more easily regard the category $s\Tup (A)$ as its algebraic counterpart $\widetilde{s\Tup (A)}.$.

Let us consider the monoids $\mathcal{I}_n$   constructed in \cref{const:I_n}, which by definition are submonoids of $A^{\otimes n} \times \mathfrak{S}_n$. The inclusion maps induce a strict monoidal embedding $$\mathcal{E}(A) \hooklongrightarrow s\Tup (A).  $$

We can now compare the functor $Z_A$ with the Kerler-Kauffman-Radford decoration functor $\mathsf{Dec}_A $. The proof is trivial by construction.

\begin{theorem}\label{thm:comparison_KKR}
The functor $\mathsf{Dec}_A $ factors through $Z_A$, that is, the following diagram of strict monoidal functors commute. 
$$
\begin{tikzcd}
\Tup \rar{\mathsf{Dec}_A} \dar[swap]{Z_A} & s\Tup (A) \\
\mathcal{E}(A) \arrow[hook]{ur} &
\end{tikzcd}
$$
\end{theorem}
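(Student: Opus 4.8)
The plan is to unwind both functors explicitly and verify that they agree on objects and on morphisms, using the isomorphism $s\Tup (A) \cong \widetilde{s\Tup (A)}$ together with the explicit formula for $Z_A$ recorded in \cref{rem:Z_mathfrakZ}. Throughout I would replace $s\Tup (A)$ by its algebraic incarnation $\widetilde{s\Tup (A)}$; under this isomorphism the embedding $\mathcal{E}(A) \hooklongrightarrow s\Tup (A)$ is precisely the functor induced, via the bundle construction of \cref{subsec:constr_M_n}, by the monoid inclusions $\mathcal{I}_n \subset A^{\otimes n} \times \mathfrak{S}_n$ of \cref{const:I_n}. Agreement on objects is then immediate: $Z_A$ sends $+^n$ to $n$ and the embedding sends $n$ back to $+^n$, so the composite sends $+^n$ to $+^n$, matching $\mathsf{Dec}_A$, which is the identity on objects.

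The main step is the comparison on morphisms. Since both $\mathsf{Dec}_A$ and the composite $\mathcal{E}(A) \hooklongrightarrow s\Tup (A)$ after $Z_A$ are strict monoidal, it suffices to compare their values on an arbitrary upwards tangle $T \in \eend{\Tup}{+^n}$. By \cref{rem:Z_mathfrakZ} we have $Z_A(T) = (\mathfrak{Z}_A(T), \sigma_T) \in \mathcal{I}_n$, and under the embedding this is regarded as the same pair $(\mathfrak{Z}_A(T), \sigma_T)$ inside $A^{\otimes n} \times \mathfrak{S}_n = \eend{\widetilde{s\Tup (A)}}{+^n}$. On the other hand, choosing any rotational diagram $D$ of $T$, the functor $\mathsf{Dec}_A$ decorates $D$ with beads according to the rules \eqref{eq:beads} and then forgets the over/under information, yielding under the isomorphism the pair $(\mathfrak{Z}_A(sD), \sigma_{sD})$. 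Because the beads are placed using exactly the same recipe \eqref{eq:beads} that defines $\mathfrak{Z}_A$, and because the word read off each component of a decorated singular diagram depends only on the cyclic/linear order of the beads along the strand and not on the over/under data, we get $\mathfrak{Z}_A(sD) = \mathfrak{Z}_A(D) = \mathfrak{Z}_A(T)$ and $\sigma_{sD} = \sigma_T$. Hence the two morphisms coincide, and the diagram commutes.

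The closest thing to an obstacle is purely bookkeeping: one must confirm that, under $s\Tup (A) \cong \widetilde{s\Tup (A)}$, the embedding $\mathcal{E}(A) \hooklongrightarrow s\Tup (A)$ really corresponds to the inclusion of the submonoids $\mathcal{I}_n$, and that the invariant $\mathfrak{Z}_A$ appearing in \cref{rem:Z_mathfrakZ} is literally the same assignment as the one computed from decorated singular diagrams. Both facts are immediate from the constructions, which is exactly why the factorisation holds essentially by definition: the two functors are assembled from one and the same bead-placement rule, differing only in that $Z_A$ records its output in the subcategory $\mathcal{E}(A)$ carved out by the monoids $\mathcal{I}_n$, whereas $\mathsf{Dec}_A$ records it in the larger category $s\Tup (A)$.
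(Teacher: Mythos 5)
Your proposal is correct and matches the paper's intent exactly: the paper simply declares the result ``trivial by construction,'' and your write-up is the explicit unwinding of that triviality --- both functors record the same bead-placement data $(\mathfrak{Z}_A(T),\sigma_T)$, one inside $\mathcal{I}_n$ and one inside $A^{\otimes n}\times\mathfrak{S}_n$, so the diagram commutes by the very definition of the embedding. No gap; your version is just more explicit than the paper's.
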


\begin{remark}
The previous theorem allows us to regard $Z_A$ as a refinement of Kerler-Kauffman-Radford decoration functor $\mathsf{Dec}_A $. The functor $Z_A$ arises canonically from the universal property of $\Tup$ described in \cref{thm:UP_opentraced} because the category of elements $\mathcal{E}(A)$ is open-traced. However, the Kerler-Kauffman-Radford's category $s\Tup (A)$  does not in general admit a brading such that  $\mathsf{Dec}_A $ is a braided functor, even when $A$ is a ribbon Hopf algebra. The reason is that the naturality of such a braiding is equivalent to the centrality of $R$ in $A\otimes A$, that is,  the equality $R(x\otimes y) = (x \otimes y)R$ for all $x,y \in A$, which in general does not hold. For instance, for the Sweedler algebra $SW$ from \cref{ex:Sweedler} we have $$R(w \otimes 1) =  \tfrac{1}{2} \big( (w+ sw)\otimes 1 + (w-sw)\otimes s  \big)$$ whereas  $$(w \otimes 1)R =  \tfrac{1}{2} \big( (w- sw)\otimes 1 + (w+sw)\otimes s  \big).$$
\end{remark}

\section{Universality of \texorpdfstring{$Z_A$}{ZA}}\label{sec:universality}

In this last section, we would like to study how the functorial universal invariant $Z_A$ relates to the Reshetikhin-Turaev functor from \cref{thm:turaev} when $A$ is a ribbon Hopf algebra.

\subsection{Domination with respect to the Reshetikhin-Turaev functor}

The Reshetikhin-Turaev invariant arises canonically from the ribbon structure on $\mathsf{fMod}_A$.  However, as we mentioned above, ribbon structures on  $\mathsf{fMod}_A$ (extending the $\Bbbk$-tensor product and the $\Bbbk$-module duality) essentially arise  from  ribbon algebra structures on $A$. At least conceptually, what this means is that $A$ should encode all information needed to recover the functor $RT_W$ for any $A$-module. In the rest of the section we will see that this is indeed the case, and that this information is encoded in the functor $Z_A$.

\begin{construction}\label{constr:rho_W}
Let $A$ be a ribbon Hopf algebra and let $W$ be a finite-free $A$-module. We define a functor $$\rho_W: \mathcal{E}(A) \to (\mathsf{fMod}_A^{\mathrm{str}})_W$$ as follows: on objects, set $\rho_W(n) := (W)^{\otimes n} =(W, \overset{{}^n}{\ldots}, W)$ . On arrows, given a morphism $(u, \sigma) \in \eend{\mathcal{E}(A)}{n}$ with $n>0$, define $$\rho_{W}(u, \sigma) : W^{\otimes n} \to W^{\otimes n} \quad , \quad \rho_{W}(u, \sigma) (w_1 \otimes \cdots  \otimes w_n ) :=\sigma_*( u \cdot (w_1  \otimes \cdots \otimes  w_n))  . $$ For $n=0$, simply set $\rho_W(\lambda)$ to be the multiplication by $\lambda$ map. That this is indeed a functor follows from the following computation:
\begin{align*}
\rho_W( (u, \sigma) \circ (v,\tau))(w_1  \otimes \cdots \otimes  w_n) &= \rho_W (\tau^{-1}_*(u) \cdot v \ , \  \sigma \tau)(w_1  \otimes \cdots \otimes  w_n)\\
&= (\sigma \tau)_* ((\tau^{-1}_*(u) \cdot v)(w_1  \otimes \cdots \otimes  w_n))\\
&=\sigma_* (u \cdot \tau_*(v) )\cdot (\sigma_* \tau_* ) (w_1  \otimes \cdots \otimes  w_n)\\
&= \sigma_* (u \cdot \tau_*(v) \cdot \tau_* (w_1  \otimes \cdots \otimes  w_n))\\
&=  \rho_W (u, \sigma) (\tau_* ( v \cdot (w_1  \otimes \cdots \otimes  w_n))) \\
&= (\rho_W (u, \sigma)  \circ \rho_W (v, \tau)) (w_1  \otimes \cdots \otimes  w_n)
\end{align*}
This functor is actually strict monoidal, indeed given arrows $(u, \sigma) : n \to n$ and $(v, \tau): m \to m$ in $\mathcal{E}(A)$ we have
\begin{align*}
\rho_W((u, \sigma) \otimes (v, \tau))(w_1  \otimes \cdots \otimes  w_{n+m}) &= \rho_W (u \otimes v, \sigma \otimes \tau)(w_1  \otimes \cdots \otimes  w_{n+m})\\
&=(\sigma \otimes \tau)_*((u \otimes b) \cdot (w_1  \otimes \cdots \otimes  w_{n+m}))\\
&= \sigma_* (u \cdot (w_1  \otimes \cdots \otimes  w_{n}) )\otimes  \\ &\phantom{=====} \tau_* (v \cdot (w_{n+1} \otimes \cdots \otimes  w_{n+m}))\\
&= (\rho_W(u, \sigma) \otimes \rho_W(b, \tau))(w_1  \otimes \cdots \otimes  w_{n+m}).
\end{align*}
Moreover, the functor $\rho_W$ preserves the open-traced structure. By the axioms of an open-traced category (in particular balanced category) and given that the functor is strict monoidal, it is enough to check this for the generator of $\mathcal{E}(A)$.  The braiding, $\tau_{1,1} = (R,(12))$ of $\mathcal{E}(A)$ is sent to the map 
\begin{equation}\label{eq:rhoV_braided}
\rho_W(R,(12))(w \otimes w') = R_{21} \cdot (w' \otimes w)= \tau_{W,W}(w \otimes w'), 
\end{equation}
which is the braiding of $\mathsf{fMod}_A^{\mathrm{str}}$ by \eqref{eq:braiding_fMod}. On the other hand, the twist $\theta_1 = (\nu, \id)$ of  $\mathcal{E}(A)$ is sent to the map 
 \begin{equation}\label{eq:rhoV_twist}
\rho_W(\nu,\id)(w) = \nu \cdot  w = \theta_W(w) 
\end{equation} 
by \eqref{eq:twist_fMod}.

Lastly, to check that the open trace is preserved, the argument is a bit more involved. Let us start by making explicit the evaluation $\widetilde{\mathrm{ev}}_{W}$ for the right duality.  According to \eqref{eq:right_evaluation}, for $x \in W$ and $\omega \in W^*$ we have
\begin{align*}
\widetilde{\mathrm{ev}}_W (x \otimes \omega) &= \mathrm{ev}_W \tau_{W,W^*}      (\theta_W \otimes \id_{W^*})  (x \otimes \omega)\\
&= \mathrm{ev}_W  \tau_{W,W^*}  (v^{-1}x \otimes \omega)\\
&= \mathrm{ev}_W \left( \sum_i \beta_i \omega \otimes \alpha_i v^{-1}x \right)\\
&= \sum_i ( \beta_i \omega) (\alpha_i v^{-1}x )\\
&= \sum_i (S(\beta_i) \alpha_i v^{-1}x)\\
&= \omega (uv^{-1}x)\\
&= \omega (\kappa x).
\end{align*}

Let us now consider a morphism $(u, \sigma) \in \eend{\mathcal{E}(A)}{n}$. For notational convenience let us suppose that $\sigma (n-1)=n$ and $\sigma (n)=n-1$. Let us also write $u=  \sum x_1 \otimes \cdots \otimes x_n$, and $(e_i)$ for a basis for $W$ and $(\omega_i)$ for its dual basis. Then taking into account \eqref{eq:trace_for_ribbon} we have

\begin{align*}
&\mathrm{tr}_{W^{\otimes n-1}, W^{\otimes n-1}}^W (\rho_W(u,\sigma))(w_1 \otimes \cdots \otimes w_{n-1})  \\
&\phantom{==} = (\id_{W^{\otimes n-1}} \otimes \widetilde{\mathrm{ev}}_{W}) (\rho_W(u,\sigma) \otimes \id_{W^*})  (\id_{W^{\otimes n-1}} \otimes \mathrm{coev}_{W})(w_1 \otimes \cdots \otimes w_{n-1})\\
&\phantom{==} = \sum_i (\id_{W^{\otimes n-1}} \otimes \widetilde{\mathrm{ev}}_{W}) (\rho_W(u,\sigma) \otimes \id_{W^*}) (w_1 \otimes \cdots \otimes w_{n-1} \otimes e_i \otimes \omega_i)\\
&\phantom{==} = \sum \sum_i (\id_{W^{\otimes n-1}} \otimes \widetilde{\mathrm{ev}}_{W}) (x_{\sigma^{-1}(1)} w_{\sigma^{-1}(1)} \otimes \cdots \otimes x_{\sigma^{-1}(n-2)} w_{\sigma^{-1}(n-2)} \otimes x_{n }e_i \otimes x_{n-1} w_{n-1} \otimes \omega_i)\\
&\phantom{==} = \sum \sum_i  x_{\sigma^{-1}(1)} w_{\sigma^{-1}(1)} \otimes \cdots \otimes x_{\sigma^{-1}(n-2)} w_{\sigma^{-1}(n-2)} \otimes \omega_i (\kappa x_{n-1} w_{n-1}) x_{n }e_i\\
&\phantom{==} = \sum   x_{\sigma^{-1}(1)} w_{\sigma^{-1}(1)} \otimes \cdots \otimes x_{\sigma^{-1}(n-2)} w_{\sigma^{-1}(n-2)} \otimes \kappa x_{n } x_{n-1} w_{n-1}\\
&\phantom{==} = \rho_W (\sum x_1 \otimes \cdots \otimes x_{n-2} \otimes x_n \kappa x_{n-1}, \hat{\sigma})(w_1 \otimes \cdots \otimes w_{n-1})\\
&\phantom{==} =\rho_W (\mathrm{tr}_{n-1,n-1}^1(u, \sigma))(w_1 \otimes \cdots \otimes w_{n-1})
\end{align*}
as desired. Lastly to have a well-defined functor $\rho_W$ we need to make sure that each of the maps $\rho_W(u, \sigma)$ is $A$-linear. We argue as follows: by definition, the category $(\mathsf{fMod}_A^{\mathrm{str}})_W$ is spanned by bradings, twists and open traces of admissible  morphisms monoidally spanned by these two, and the same holds for $\mathcal{E}(A)$. Now, since braidings, twists and traces in $\mathsf{fMod}_A^{\mathrm{str}}$ are $A$-linear, and we have seen that $\rho_W$ preserves these three, then we conclude that each $\rho_W(u, \sigma)$ must be $A$-linear as well.
\end{construction}

For every finite-free $A$-module $W$, we have then constructed a strict monoidal functor $$\rho_W: \mathcal{E}(A) \to (\mathsf{fMod}_A^{\mathrm{str}})_W$$ that preserves the open-traced structure. The following theorem (which is well-known in the non-functorial setting) justifies the adjective ``universal'' for $Z_A$:

\begin{theorem}\label{thm:ZA_universal}
Let $A$ be a ribbon Hopf algebra and let $W$ be a finite-free $A$-module. Then we have the following commutative diagram:
$$
\begin{tikzcd}
\Tup \rar{RT_W} \dar[swap]{Z_A} & (\mathsf{fMod}_A^{\mathrm{str}})_W\\
\mathcal{E}(A) \arrow{ur}[swap]{\rho_W} &
\end{tikzcd}
$$
That is, for any  finite-free $A$-module $W$, the Reshetikhin-Turaev invariant $RT_W$ factors through $Z_A$: we have $$RT_W (T) = \rho_W (Z_A(T))$$ for any upwards tangle $T$.
\end{theorem}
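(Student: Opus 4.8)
The plan is to deduce the identity $RT_W = \rho_W \circ Z_A$ entirely from the \emph{uniqueness} clause of the universal property of $\Tup$ (\cref{thm:UP_opentraced}), rather than by any direct diagrammatic or bead-chasing computation. The whole point is that both $RT_W$ and the composite $\rho_W \circ Z_A$ are strict monoidal, open-trace-preserving functors out of $\Tup$ sending the generator $+$ to the same object $W$, and the universal property says there is at most one such functor.

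First I would record that $\rho_W \circ Z_A : \Tup \to (\mathsf{fMod}_A^{\mathrm{str}})_W$ is strict monoidal, being a composite of the two strict monoidal functors $Z_A$ and $\rho_W$, and that it sends the generator to the correct object: since $Z_A(+) = 1$ by \eqref{eq:Z_A} and $\rho_W(1) = W^{\otimes 1} = W$, we get $(\rho_W \circ Z_A)(+) = W$. Next I would verify that the composite preserves the braiding, the twist and the open trace. This is immediate: $Z_A$ preserves all three by its very definition as the universal functor \eqref{eq:Z_A}, while $\rho_W$ preserves all three by \cref{constr:rho_W} (concretely via \eqref{eq:rhoV_braided} for the braiding, \eqref{eq:rhoV_twist} for the twist, and the trace computation at the end of that construction), and preservation of structure is closed under composition.

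It then remains to check the compatibility with the structure functors to $\SS$ demanded by \cref{thm:UP_opentraced}. Writing $P_{\Tup}$, $P_{\mathcal{E}(A)}$ and $P_W$ for the respective structure functors to $\SS$ (from \cref{ex:Tup_open_traced}, \cref{constr:E(A)} and \cref{constr:open_tr_from_tr}), I would compute $P_W \circ (\rho_W \circ Z_A) = (P_W \circ \rho_W) \circ Z_A = P_{\mathcal{E}(A)} \circ Z_A = P_{\Tup}$, where the two inner equalities hold because $\rho_W$ and $Z_A$ each commute with the projections to $\SS$ as part of preserving the open-traced structure. Since the generating object $W$ has length one, $|W| = 1$ and hence $G_{|W|} = \id_{\SS}$, so this chain of equalities is exactly the commuting square required of a structure-preserving functor sending $+ \mapsto W$.

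With these checks assembled, both $RT_W$ (defined in \eqref{eq:RT_open_trace} as the unique open-trace-preserving functor $\Tup \to (\mathsf{fMod}_A^{\mathrm{str}})_W$ with $+ \mapsto W$) and $\rho_W \circ Z_A$ satisfy the hypotheses of the uniqueness clause of \cref{thm:UP_opentraced}; therefore they coincide, yielding $RT_W(T) = \rho_W(Z_A(T))$ for every upwards tangle $T$. I do not anticipate any genuine obstacle, since all the substantive content—that $\rho_W$ respects the braiding, twist and partial trace—has already been established in \cref{constr:rho_W}; the proof of the theorem itself reduces to packaging those facts and invoking uniqueness. The only point requiring a moment's care is the structure-functor square, whose verification hinges on the observation $|W| = 1$ so that $G_{|W|}$ is the identity.
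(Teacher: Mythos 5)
Your proposal is correct and follows essentially the same route as the paper, which also deduces $RT_W = \rho_W \circ Z_A$ by observing that the composite preserves the braiding, twist and open trace, sends $+$ to $W$, and then invoking the uniqueness clause of \cref{thm:UP_opentraced}. Your additional explicit verification of the structure-functor square with $G_{|W|} = \id_{\SS}$ is a careful touch the paper leaves implicit, but it does not change the argument.
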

\begin{proof}
Since $Z_A$ and $\rho_W$ preserve the braiding, twist and open traces, so does its composite $\rho_W \circ Z_A$, and moreover $(\rho_W \circ Z_A)(+)=(W)$ hence we  have  $RT_W =\rho_W \circ Z_A$ by \cref{thm:UP_opentraced}. 
\end{proof}

\subsection{Endomorphism XC-algebras from representations of ribbon Hopf algebras}

We now turn to the following situation: let $A$ be a ribbon Hopf algebra and let $W$ be a finite-free $A$-module. On the one hand, $W$ gives rise to the Reshetikhin-Turaev invariant $$RT_W: \T^+ \to \mathsf{fMod}_A^{\mathrm{str}}. $$ On the other hand, $A$ is in particular an XC-algebra, so by \cref{lemma:endom_alg_tr_XC} the endomorphism algebra $\mathrm{End}_\Bbbk(W)$ inherits a traced XC-algebra structure, so the $A$-module $W$ gives rise to the traced universal invariant
$$ Z_{\mathrm{End}_\Bbbk (W)}: \T^+ \to \mathcal{E}(\mathrm{End}_\Bbbk (W)) $$ from \eqref{eq:traced_universal_invariant}. 
It turns out that these two invariants are essentially the same.

\begin{construction}
Let $A$ be a ribbon Hopf algebra and let $W$ be a finite-free $A$-module. We define a monoidal embedding $$\iota_W :\mathcal{E}(\mathrm{End}_\Bbbk (W)) \hooklongrightarrow  \mathsf{fMod}_A^{\mathrm{str}}$$ as follows: on objects, set $\iota_W (n) := (W)^{\otimes n}$. On arrows, given a morphism $$\left( \sum f_1 \otimes \cdots \otimes f_n, \sigma \right) : n \to n$$ in $\mathcal{E}(\mathrm{End}_\Bbbk (W))$, set $$ \iota_W \left( \sum f_1 \otimes \cdots \otimes f_n, \sigma \right) := \sum \sigma_*(f_1 \otimes \cdots \otimes f_n)  : W^{\otimes n} \to W^{\otimes n},$$
where we view each $f_1 \otimes \cdots \otimes f_n$ in the right-hand side as an element of $\mathrm{End}_\Bbbk (W^{\otimes n})$ via the canonical isomorphism $$\mathrm{End}_\Bbbk (W)^{\otimes n} \toiso \mathrm{End}_\Bbbk (W^{\otimes n}),$$ and $\sigma_* : W^{\otimes n} \to W^{\otimes n}$ permutes the factors.

Let us start by checking that $\iota_W$ is indeed a functor: for composable arrows $\left( \sum f_1 \otimes \cdots \otimes f_n, \sigma \right)$ and $\left( \sum g_1 \otimes \cdots \otimes g_n, \tau \right)$ we have
\begin{align*}
\iota_W &\left(\left( \sum f_1 \otimes \cdots \otimes f_n, \sigma \right) \circ \left( \sum g_1 \otimes \cdots \otimes g_n, \tau \right)   \right)\\
 &= \iota_W \left( \sum (f_{\tau (1)} \otimes \cdots \otimes f_{\tau (n)} )    \cdot (g_1 \otimes \cdots \otimes g_n) , \sigma\tau  \right) \\
&= \sigma_* \tau_* \left( \sum (f_{\tau (1)} \otimes \cdots \otimes f_{\tau (n)} )    \cdot (g_1 \otimes \cdots \otimes g_n) \right) \\
&= \sigma_* \left( \sum f_1 \otimes \cdots \otimes f_{n} \right)    \cdot \tau_* \left( \sum  g_1 \otimes \cdots \otimes g_n \right) \\
&= \iota_W \left( \sum f_1 \otimes \cdots \otimes f_n, \sigma \right) \circ  \iota_W  \left( \sum g_1 \otimes \cdots \otimes g_n, \tau   \right).
\end{align*}
Now this functor is clearly an embedding, that is, an injective-on-objects, faithful functor. Besides, it is strict monoidal: 
\begin{align*}
\iota_W &\left(\left( \sum f_1 \otimes \cdots \otimes f_n, \sigma \right) \otimes  \left( \sum g_1 \otimes \cdots \otimes g_n, \tau \right)   \right)\\
&=\sum  \iota_W  (f_1 \otimes \cdots \otimes f_n \otimes g_1 \otimes \cdots \otimes g_n, \sigma \otimes \tau )\\
&= \sum (\sigma_* \otimes \tau_* )(f_1 \otimes \cdots \otimes f_n \otimes g_1 \otimes \cdots \otimes g_n)  \\
&= \sigma_* \left( \sum f_1 \otimes \cdots \otimes f_{n} \right)   \otimes  \tau_* \left( \sum  g_1 \otimes \cdots \otimes g_n \right) \\
&= \iota_W \left( \sum f_1 \otimes \cdots \otimes f_n, \sigma \right) \otimes  \iota_W \left( \sum g_1 \otimes \cdots \otimes g_n, \tau \right)  .
\end{align*}
Finally, let us check that $\iota_W$ is in fact traced. Consider the following square of categories and functors:
$$
\begin{tikzcd}
\mathcal{E}(A) \arrow{r}{\rho_W} \dar &  (\mathsf{fMod}_A^{\mathrm{str}})_W \dar[hook] \\
\mathcal{E}(\mathrm{End}_\Bbbk (W)) \arrow[hook]{r}{\iota_W} & \mathsf{fMod}_A^{\mathrm{str}}
\end{tikzcd}
$$
Here the left  vertical functor is the canonical one induced by the algebra  map $A \to \mathrm{End}_\Bbbk(W)$ defining the $A$-module structure of $W$, which trivially preserves the braiding, twist and open trace of admissible morphisms. Now, comparing the definitions of $\rho_W$ and $\iota_W$ immediately shows that the square  commutes. In particular, this implies that $\iota_W$  preserves the braiding and twist, and the computation for the preservation of open  trace of admissible morphisms is similar to that of \cref{constr:rho_W}. Therefore it is only left to check that $\iota_W$ preserves the trace of morphisms involving the trace of an endomorphism. Let us consider an arbitrary element $\left( \sum f_1 \otimes \cdots \otimes f_n, \sigma \right) \in \eend{\mathcal{E}(\mathrm{End}_\Bbbk (W))}{n}$ and without loss of generality let us assume $\sigma (n)=n$. As before we write $(e_i)$ for a basis for $W$ and $(\omega_i)$ for its dual basis.  We compute:

\begin{align*}
&\mathrm{tr}_{W^{\otimes n-1}, W^{\otimes n-1}}^W \left( \iota_W\left( \sum f_1 \otimes \cdots \otimes f_n, \sigma \right)\right)(w_1 \otimes \cdots \otimes w_{n-1})  \\
&\phantom{==} = (\id_{W^{\otimes n-1}} \otimes \widetilde{\mathrm{ev}}_{W}) (\iota_W\left( \sum f_1 \otimes \cdots \otimes f_n, \sigma \right) \otimes \id_{W^*})  (\id_{W^{\otimes n-1}} \otimes \mathrm{coev}_{W})(w_1 \otimes \cdots \otimes w_{n-1})\\
&\phantom{==} = \sum_i (\id_{W^{\otimes n-1}} \otimes \widetilde{\mathrm{ev}}_{W}) (\iota_W\left( \sum f_1 \otimes \cdots \otimes f_n, \sigma \right) \otimes \id_{W^*}) (w_1 \otimes \cdots \otimes w_{n-1} \otimes e_i \otimes \omega_i)\\
&\phantom{==} = \sum \sum_i (\id_{W^{\otimes n-1}} \otimes \widetilde{\mathrm{ev}}_{W}) (f_{\sigma^{-1}(1)} w_{\sigma^{-1}(1)} \otimes \cdots \otimes f_{\sigma^{-1}(n-1)} w_{\sigma^{-1}(n-1)} \otimes f_{n }e_i  \otimes \omega_i)\\
&\phantom{==} = \sum \sum_i  x_{\sigma^{-1}(1)} f_{\sigma^{-1}(1)} \otimes \cdots \otimes f_{\sigma^{-1}(n-1)} w_{\sigma^{-1}(n-1)} \otimes \omega_i (\kappa f_{n} e_i) \\
&\phantom{==} =  \sum  \mathrm{tr}(\kappa f_n)  f_{\sigma^{-1}(1)} w_{\sigma^{-1}(1)} \otimes \cdots \otimes f_{\sigma^{-1}(n-1)} w_{\sigma^{-1}(n-1)}\\
&\phantom{==} = \iota_W (\sum   \mathrm{tr}(\kappa f_n)  f_1 \otimes \cdots \otimes f_{n-1} , \tilde{\sigma})(w_1 \otimes \cdots \otimes w_{n-1})\\
&\phantom{==} =\iota_W (\mathrm{tr}_{n-1,n-1}^1(\sum f_1 \otimes \cdots \otimes f_n, \sigma))(w_1 \otimes \cdots \otimes w_{n-1})
\end{align*}
as we claimed.
\end{construction}

\begin{theorem}\label{thm:RTV_ZEndV}
Let $A$ be a ribbon Hopf algebra and let $W$ be a finite-free $A$-module. Then we have the following commutative diagram:
$$
\begin{tikzcd}
& \T^+ \arrow{dl}[swap]{ Z_{\mathrm{End}_\Bbbk(W)}} \arrow{dr}{RT_W} & \\
\mathcal{E}(\mathrm{End}_\Bbbk(W)) \arrow[hook]{rr}{\iota_W} &&  \mathsf{fMod}_A^{\mathrm{str}}
\end{tikzcd}
$$
That is, viewing $\mathcal{E}(\mathrm{End}(W))$ as a traced monoidal subcategory of $\mathsf{fMod}_A^{\mathrm{str}},$ the functors $ Z_{\mathrm{End}_\Bbbk(W)}$ and $RT_W$ coincide.
\end{theorem}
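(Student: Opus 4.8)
The plan is to apply the uniqueness clause of the universal property of $\T^+$ recorded in \cref{thm:UP_traced}. Both $RT_W$ and the composite $\iota_W \circ Z_{\mathrm{End}_\Bbbk(W)}$ are strict monoidal functors $\T^+ \to \mathsf{fMod}_A^{\mathrm{str}}$, so to identify them it suffices to check that they agree on the generating object $+$ and that both preserve the whole traced structure, i.e. the braiding, the twist and the trace. Once this is verified, uniqueness in \cref{thm:UP_traced} forces the two functors to coincide.

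First I would check the condition on the generating object. By definition $Z_{\mathrm{End}_\Bbbk(W)}(+)$ is the generating object $1$ of $\mathcal{E}(\mathrm{End}_\Bbbk(W))$, and $\iota_W(1) = (W)^{\otimes 1} = (W)$, which is precisely $W$ regarded as a length-one sequence in $\mathsf{fMod}_A^{\mathrm{str}}$. Hence $(\iota_W \circ Z_{\mathrm{End}_\Bbbk(W)})(+) = W = RT_W(+)$.

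Next comes the preservation of structure. The functor $Z_{\mathrm{End}_\Bbbk(W)}$ preserves braiding, twist and trace by its very definition through \cref{thm:UP_traced} and \eqref{eq:traced_universal_invariant}. The functor $\iota_W$ was shown, in the construction immediately preceding this theorem, to be a traced monoidal embedding: its compatibility with the braiding and twist follows from the commuting square relating $\rho_W$ and $\iota_W$ together with the preservation properties of $\rho_W$ established in \cref{constr:rho_W}, while its compatibility with the trace -- in particular the identity producing the scalar $\mathrm{tr}(\kappa f_n)$ when a factor is closed off -- was verified there by the explicit computation resting on $\widetilde{\mathrm{ev}}_W(x \otimes \omega) = \omega(\kappa x)$. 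Since $\mathsf{fMod}_A^{\mathrm{str}}$ is a ribbon category, hence canonically traced, the composite $\iota_W \circ Z_{\mathrm{End}_\Bbbk(W)}$ therefore preserves braiding, twist and trace as well.

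Finally, $RT_W$ is by construction the restriction to $\T^+$ of the ribbon functor of \cref{thm:turaev}, and as such it is the unique strict monoidal functor $\T^+ \to \mathsf{fMod}_A^{\mathrm{str}}$ sending $+$ to $W$ and preserving the traced structure. Uniqueness in \cref{thm:UP_traced} then yields $RT_W = \iota_W \circ Z_{\mathrm{End}_\Bbbk(W)}$. I expect no genuine obstacle in this argument: all the substantive work, namely the verification that $\iota_W$ respects the trace (including the endomorphism-trace term), has already been carried out in the preceding construction, so the only care required is bookkeeping of conventions, e.g. that the one-object sequence $(W)$ in the strictification is literally the object $W$.
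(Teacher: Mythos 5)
Your argument is correct and coincides with the paper's own proof: both invoke the uniqueness clause of \cref{thm:UP_traced}, noting that $\iota_W \circ Z_{\mathrm{End}_\Bbbk(W)}$ is a traced strict monoidal functor sending $+$ to $(W)$, with all the substantive verification (that $\iota_W$ preserves the traced structure) already carried out in the preceding construction.
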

\begin{proof}
The functors $\iota_W$ and $ Z_{\mathrm{End}_\Bbbk(W)}$ are traced strict monoidal, then so is its composite $\iota_W \circ Z_{\mathrm{End}_\Bbbk(W)}$, which satisfies that $(\iota_W \circ Z_{\mathrm{End}_\Bbbk(W)})(+)=(W)$, so we conclude that $RT_W= \iota_W \circ Z_{\mathrm{End}_\Bbbk(W)}$ by \cref{thm:UP_traced}.
\end{proof}

We can summarise the relations between the various functors that have appeared in this paper:

\begin{corollary}\label{cor:prism}
For any ribbon Hopf algebra $A$ and any finite-free $A$-module $W$, we have the following commutative prism:
$$
\begin{tikzcd}
\Tup \arrow[rrrr,"RT_W"] \arrow[rrd,"Z_A"] \arrow[dd,hook] &  &               &  & (\mathsf{fMod}_A^{\mathrm{str}})_W  \arrow[dd,hook] \\
                                       &  & \mathcal{E}(A) \arrow{rru}{\rho_W} &  &                           \\
\T^+ \arrow[rrrr,pos=0.6,"RT_W"] \arrow[rrd,"Z_{\eend{\Bbbk}{W}}"]            &  &               &  & \mathsf{fMod}_A^{\mathrm{str}}            \\
                                       &  & \mathcal{E}(\eend{\Bbbk}{W}) \arrow[hook]{rru}{\iota_W} \arrow[from=uu,pos=0.3,crossing over]        &  &                          
\end{tikzcd}
$$
\end{corollary}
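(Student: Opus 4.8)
The plan is to verify the prism face by face, observing that four of its five faces are already established and that the fifth then follows formally. First I would record that the two triangular faces are exactly results already proved: the top triangle $RT_W = \rho_W \circ Z_A$ is \cref{thm:ZA_universal}, and the bottom triangle $RT_W = \iota_W \circ Z_{\mathrm{End}_\Bbbk(W)}$ is \cref{thm:RTV_ZEndV}. Next, the rectangular face spanned by the two $RT_W$ edges and the two inclusions commutes by \cref{lemma:restriction}: the ribbon functor $RT_W\colon \T \to \mathsf{fMod}_A^{\mathrm{str}}$ restricted to $\Tup$ factors through $(\mathsf{fMod}_A^{\mathrm{str}})_W$, and since the bottom $RT_W\colon \T^+ \to \mathsf{fMod}_A^{\mathrm{str}}$ is, by the uniqueness in its defining universal property, the restriction of the $\T$-functor, the square through $\T^+$ commutes as well. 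Finally, the right-hand rectangle relating $\rho_W$, $\iota_W$, the inclusion $(\mathsf{fMod}_A^{\mathrm{str}})_W \hookrightarrow \mathsf{fMod}_A^{\mathrm{str}}$, and the canonical functor $V\colon \mathcal{E}(A) \to \mathcal{E}(\mathrm{End}_\Bbbk(W))$ induced by the structure map $\rho\colon A \to \mathrm{End}_\Bbbk(W)$ is precisely the commuting square established during the construction of $\iota_W$.

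It then remains to treat the left-hand rectangle, namely that $V \circ Z_A = Z_{\mathrm{End}_\Bbbk(W)} \circ (\Tup \hookrightarrow \T^+)$ as functors $\Tup \to \mathcal{E}(\mathrm{End}_\Bbbk(W))$. Rather than comparing the two composites directly, I would post-compose both with the embedding $\iota_W$ and exploit the faces already in hand. On one side, the right rectangle gives $\iota_W \circ V = (\text{incl}) \circ \rho_W$, so $\iota_W \circ V \circ Z_A = (\text{incl}) \circ \rho_W \circ Z_A = (\text{incl}) \circ RT_W$ by the top triangle, which by \cref{lemma:restriction} equals $RT_W \circ (\Tup \hookrightarrow \T^+)$. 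On the other side, the bottom triangle gives $\iota_W \circ Z_{\mathrm{End}_\Bbbk(W)} = RT_W$, whence $\iota_W \circ Z_{\mathrm{End}_\Bbbk(W)} \circ (\Tup \hookrightarrow \T^+) = RT_W \circ (\Tup \hookrightarrow \T^+)$ as well. Thus the two post-composites agree, and since $\iota_W$ is an embedding, in particular faithful and injective on objects, I may cancel it to conclude that the left rectangle commutes. Having all five faces, the prism commutes.

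The one genuinely new ingredient is the left rectangle, and the hard part will be making sure the formal cancellation is legitimate: I would double-check that the argument uses only equalities of functors already available and that the several incarnations of $RT_W$ (over $\T$, over $\T^+$, and restricted to $\Tup$) are consistently identified through their defining universal properties, since the derivation hinges on the faithfulness of $\iota_W$ rather than on a hands-on computation. As a safeguard, I would note that the left rectangle can also be verified directly from the bead description of \cref{rem:Z_mathfrakZ}: on an upwards tangle no closed components occur, so $\mathfrak{Z}_{\mathrm{End}_\Bbbk(W)}$ is computed by the same bead-reading as $\mathfrak{Z}_A$ but with the beads $R,\kappa$ replaced by $(\rho \otimes \rho)(R),\rho(\kappa)$ as in \cref{lemma:endom_alg_tr_XC}, which is exactly the effect of $V$ on $Z_A$. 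This provides an independent confirmation should the cancellation argument be felt too terse, and it makes transparent why the trace-augmented part of $\mathcal{E}(\mathrm{End}_\Bbbk(W))$ never intervenes on the image of $\Tup$.
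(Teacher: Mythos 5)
Your proposal is correct and is essentially the argument the paper intends: the corollary is presented as a summary with no separate proof, its faces being exactly \cref{thm:ZA_universal}, \cref{thm:RTV_ZEndV}, \cref{lemma:restriction}, and the commuting square established in the construction of $\iota_W$. Your derivation of the remaining left-hand face by post-composing with $\iota_W$ and cancelling is legitimate (an injective-on-objects, faithful functor may be cancelled on both objects and morphisms), and your bead-description cross-check agrees with \cref{rem:Z_mathfrakZ}.
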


\bibliographystyle{halpha-abbrv}
\bibliography{bibliografia}

\end{document}